\definecolor{green}{rgb}{0,0.7,0}
\definecolor{darkred}{rgb}{0.7,0,0}
\def\YEAR{\year}\newcount\VOL\VOL=\YEAR\advance\VOL by-1995
\def\firstpage{1}\def\lastpage{1000}
\def\received{}\def\revised{}
\def\communicated{}
\def\magnification{\afterassignment\m@g\count@}
\def\m@g{\mag=\count@\hsize6.5truein\vsize8.9truein\dimen\footins8truein}
\font\eightrm=cmr8
\font\caps=cmcsc10                    % Theorem, Lemma etc
\font\Caps=cmcsc10 scaled \magstep1   % Title
\def\DocMath{}
\renewcommand{\@evenhead}{%
    \ifnum\thepage>\lastpage\rlap{\thepage}\hfill%
    \else\rlap{\thepage}\slshape\leftmark\hfill{\caps\SAuthor}\hfill\fi}%
\renewcommand{\@oddhead}{%
    \ifnum\thepage=\firstpage{\DocMath\hfill\llap{\thepage}}%
    \else{\slshape\rightmark}\hfill{\caps\STitle}\hfill\llap{\thepage}\fi}%
\def\TSkip{\bigskip}
\newbox\TheTitle{\obeylines\gdef\GetTitle #1
\ShortTitle  #2
\SubTitle    #3
\Author      #4
\ShortAuthor #5
\EndTitle
{\setbox\TheTitle=\vbox{\baselineskip=20pt\let\par=\cr\obeylines%
\halign{\centerline{\Caps##}\cr\noalign{\medskip}\cr#1\cr}}%
	\copy\TheTitle\TSkip\TSkip%
\def\next{#2}\ifx\next\empty\gdef\STitle{#1}\else\gdef\STitle{#2}\fi%
\def\next{#3}\ifx\next\empty%
    \else\setbox\TheTitle=\vbox{\baselineskip=20pt\let\par=\cr\obeylines%
    \halign{\centerline{\caps##} #3\cr}}\copy\TheTitle\TSkip\TSkip\fi%
%\setbox\TheTitle=\vbox{\let\par=\cr\obeylines%
%\halign{\centerline{\caps##} #4\cr}}\copy\TheTitle\TSkip\TSkip%
\centerline{\caps #4}\TSkip\TSkip%
\def\next{#5}\ifx\next\empty\gdef\SAuthor{#4}\else\gdef\SAuthor{#5}\fi%
\ifx\received\empty\relax
    \else\centerline{\eightrm Received: \received}\fi%
\ifx\revised\empty\TSkip%
    \else\centerline{\eightrm Revised: \revised}\TSkip\fi%
\ifx\communicated\empty\relax
    \else\centerline{\eightrm Communicated by \communicated}\fi\TSkip\TSkip%
\catcode'015=5}}\def\Title{\obeylines\GetTitle}
\def\Abstract{\begingroup\narrower
    \parskip=\medskipamount\parindent=0pt{\caps Abstract. }}
\def\EndAbstract{\par\endgroup\TSkip}
\long\def\MSC#1\EndMSC{\def\arg{#1}\ifx\arg\empty\relax\else
     {\par\narrower\noindent%
     2000 Mathematics Subject Classification: #1\par}\fi}
\long\def\KEY#1\EndKEY{\def\arg{#1}\ifx\arg\empty\relax\else
	{\par\narrower\noindent Keywords and Phrases: #1\par}\fi\TSkip}
\newbox\TheAdd\def\Addresses{\vfill\copy\TheAdd\vfill
    \ifodd\number\lastpage\vfill\eject\phantom{.}\vfill\eject\fi}
{\obeylines\gdef\GetAddress #1
\Address #2 
\Address #3
\Address #4
\EndAddress
{\def\xs{4.3truecm}\parindent=0pt
\setbox0=\vtop{{\obeylines\hsize=\xs#1\par}}\def\next{#2}
\ifx\next\empty % 1 address
     \setbox\TheAdd=\hbox to\hsize{\hfill\copy0\hfill}
\else\setbox1=\vtop{{\obeylines\hsize=\xs#2\par}}\def\next{#3}
\ifx\next\empty % 2 addresses
     \setbox\TheAdd=\hbox to\hsize{\hfill\copy0\hfill\copy1\hfill}
\else\setbox2=\vtop{{\obeylines\hsize=\xs#3\par}}\def\next{#4}
\ifx\next\empty\ % 3 addresses
     \setbox\TheAdd=\vtop{\hbox to\hsize{\hfill\copy0\hfill\copy1\hfill}
                \vskip20pt\hbox to\hsize{\hfill\copy2\hfill}}
\else\setbox3=\vtop{{\obeylines\hsize=\xs#4\par}}
     \setbox\TheAdd=\vtop{\hbox to\hsize{\hfill\copy0\hfill\copy1\hfill}
	        \vskip20pt\hbox to\hsize{\hfill\copy2\hfill\copy3\hfill}}
\fi\fi\fi\catcode'015=5}}\gdef\Address{\obeylines\GetAddress}
\numberwithin{equation}{section}
\theoremstyle{plain}
\newtheorem{theo}{Theorem}[section]
\newtheorem*{theoNo}{Theorem}
\newtheorem{lem}[theo]{Lemma}
\newtheorem{prop}[theo]{Proposition}
\newtheorem{cor}[theo]{Corollary}
\newtheorem{claim}[theo]{Claim}
\theoremstyle{definition}
\newtheorem{defn}[theo]{Definition}
\theoremstyle{remark}
\newtheorem{rem}[theo]{Remark}
\newtheorem{exam}[theo]{Example}
\def\kb{\overline{k}}
\def\Sm{\mathcal Sm}
\def\Sch{\mathcal Sch}
\def\SPCk{{\mathcal SmProjCor} / k}
\def\SPCkZp{({\mathcal SmProjCor} / k)[\tfrac{1}{p}]}
\def\SP{{\ensuremath{\mathcal SmProj}}}
\def\SCk{{\mathcal SmCor} / k}
\def\Ck{{\mathcal Cor} / k}
\def\Comp{{\mathcal Comp}}
\def\Shv{{\mathcal Shv}}
\def\Dknz{D_{\et}(k,\nz)}
\def\DMeffk{DM^{\eff}(k)}
\def\DMeffket{DM^{\eff}_{\et}(k)}
\def\DMeffkR{DM^{\eff}(k,R)}
\def\DMeffkRet{DM^{\eff}_{\et}(k,R)}
\def\DMeffknz{DM^{\eff}(k,\nz)}
\def\DMeffknzet{DM^{\eff}_{\et}(k,\nz)}
\def\DMeffgmkR{DM^{\eff}_{gm}(k,R)}
\def\DMeffgmkZp{DM^{\eff}_{gm}(k,\zpi)}
\def\CHM{\textrm{Chow}^{\eff}(k, R)}
\def\CHMp{\textrm{Chow}^{\eff}(k, \zpi)}
\def\MHR#1#2{H^M_{#2}(#1,R)}
\def\MHRet#1#2{H^{M,\et}_{#2}(#1,R)}
\def\MHnz#1#2{H^M_{#2}(#1,\nz)}
\def\MHnzet#1#2{H^{M,\et}_{#2}(#1,\nz)}
\def\WHnz#1#2{H^W_{#2}(#1,\nz)}
\def\WcHnz#1#2{H^{W}_{#2,c}(#1,\nz)}
\def\wH#1#2{H^W_0(#1,#2)}
\def\wHH#1#2#3{H^W_{#1}(#2,#3)}
\def\Hnztet#1#2#3{H^{\et}_{#2}(#1,\nz(#3))}
\def\Hcnztet#1#2#3{H^{\et}_{#2,c}(#1,\nz(#3))}
\def\Xb{\overline{X}}
\newcommand{\rHom}{\operatorname{Rhom}}
\newcommand{\uHom}{\operatorname{\underline{hom}}}
\newcommand{\Tot}{\operatorname{Tot}}
\newcommand{\eff}{{\operatorname{eff}}}
\newcommand{\red}{{\operatorname{red}}}
\newcommand{\by}[1]{\overset{#1}{\longrightarrow}}
\newcommand{\iso}{\by{\sim}}
\renewcommand{\epsilon}{\varepsilon}
\renewcommand{\phi}{\varphi}
\newcommand{\Spec}{\mathrm{Spec}}
\newcommand{\Hom}{\text{\rm Hom}}
\newcommand{\Q}{{\mathbb Q}}
\newcommand{\Z}{{\mathbb Z}}
\newcommand{\im}{\mathrm{Im}}
\newcommand{\CH}{\mathrm{CH}}
\renewcommand{\CH}{\mathrm{CH}}
\newcommand{\et}{\text{\rm \'et}}
\newcommand{\qfh}{\text{qfh}}
\newcommand{\equi}{\text{equi}}
\newcommand{\ab}{\text{\rm ab}}
\def\CH{{\mathrm{CH}}}      % Chow group
\def\Ker{{\mathrm{Ker}}}    % kernel
\def\tor{{\mathrm{tor}}}  % torsion
\def\et{{\text{\'{e}t}}}    % etale
\def\cH{{\mathcal{H}}}        % homological functor
\def\Hom{{\mathrm{Hom}}}    % homomorphism group
\def\id{{\mathrm{id}}}     % identity
\def\ker{{\mathrm{Ker}}}    % kernel
\def\ch{{\mathrm{ch}}}    % symbol
\def\Tr{{\mathrm{Tr}}}
\def\PT{{\textrm{Pre-Tr}}}
\def\cF{{\mathcal F}}
\def\cI{{\mathcal I}}
\def\cX{{\mathcal X}}
\def\cV{{\mathcal V}}
\def\cA{{\mathcal A}}
\def\cZ{{\mathcal Z}}
\def\cS{{\mathcal S}}
\def\cT{{\mathcal T}}
\def\cH{{\mathcal H}}
\def\AB{{\mathcal AB}}
\def\Lam{\Lambda}
\def\bZ{{\mathbb Z}}
\def\ZZ{{\mathbb Z}}
\def\bQ{{\mathbb Q}}
\def\bP{{\mathbb P}}
\def\bA{{\mathbb A}}
\def\nz{\bZ/n\bZ}
\def\zpi{\bZ[\tfrac{1}{p}]}
\def\zll{\bZ_{(l)}}
\def\rmapo#1{\overset{#1}{\longrightarrow}}
\def\isom{\overset{\cong}{\longrightarrow}}
\def\qaq{\quad\text{and}\quad}
\def\qfor{\quad\text{for }}
\def\qwith{\quad\text{with }}
\def\ol#1{\overline{#1}}
\def\Xb{\ol X}
\def\WHL#1#2{H^W_{#2}(#1,\Lam)}
\def\HCTR#1#2{\widetilde{H}^M_{#2}(#1,R)}
\newif\ifShowNewSectionTwoPointTwoChanges
\begin{document}
%% DOCUMENTA STUFF STARTS AGAIN HERE
%%%%% ------------- fill in your data below this line  -------------------
%%%%%    The following lines \Title ... \EndAddress must ALL be present
%%%%%    and in the given order.
\Title Weight homology of motives
%%%%%    Put here the title. Line breaks will be recognized. 
\ShortTitle 
%%%%%    Running title for odd numbered pages, ONE line, please. 
%%%%%    If none is given, \Title will be used instead.          
\SubTitle   
%%%%%    A possible subtitle goes here.
\Author Shane Kelly and Shuji Saito
%%%%%    Put here name(s) of authors. Line breaks will be recognized.  
\ShortAuthor 
%%%%%%   Running title for even numbered pages, ONE line, please. 
%%%%%%   If none is given, \Author will be used instead.          
\EndTitle
\Abstract 
%%%%%    Put here the abstract of your manuscript.
%%%%%    Avoid macros and complicated TeX expressions, as this is
%%%%%    automatically translated and posted as an html file.
In the first half of this article we define a new weight homology functor on Voevodsky's category of effective motives, and investigate some of its properties. In special cases we recover Gillet-Soul{\'e}'s weight homology, and Geisser's Kato-Suslin homology. In the second half, we consider the notions of ``co-{\'e}tale'' and ``reduced'' motives, and use the notions to a prove a theorem comparing motivic homology to {\'e}tale motivic homology.

Due to \cite{Ke} we do not have to restrict to smooth schemes.
\EndAbstract
\MSC 
%%%%%    2000 Mathematics Subject Classification: 
14F43 % Other algebro-geometric (co)homologies (e.g., intersection, equivariant, Lawson, Deligne (co)homologies)
19E15, % Algebraic cycles and motivic cohomology [See also 14C25, 14C35]
14F20, % Etale and other Grothendieck topologies and cohomologies
32S35. % Mixed Hodge theory of singular varieties
\EndMSC
\KEY 
%%%%%    Keywords and Phrases:     
Weight homology, Motivic cohomology, {\'E}tale cohomology, Finite fields.
\EndKEY
%%%%%    All 4 \Address lines below must be present. To center the last
%%%%%    entry, no empty lines must be between the following \Address
%%%%%    and \EndAddress lines.
\Address 
%%%%%    Address of first Author here
Shane Kelly, 
Interactive Research Center of Science, 
Graduate School of Science and Engineering, 
Tokyo Institute of Technology 
2-12-1 Ookayama, Meguro, 
Tokyo 152-8551 JAPAN
shanekelly64@gmail.com
\Address
%%%%%    Address of second Author here etc.
Shuji Saito,
Interactive Research Center of Science, 
Graduate School of Science and Engineering, 
Tokyo Institute of Technology 
2-12-1 Ookayama, Meguro, 
Tokyo 152-8551 JAPAN
sshuji@msb.biglobe.ne.jp
\Address
\Address
\EndAddress
%%
%%       Make sure the last tex command in your manuscript
%%       before the first \end{document} is the command  \Addresses
%%
%%---------------------Here the prologue ends---------------------------------
%%--------------------Here the manuscript starts------------------------------
%% i.e., END OF DOCUMENTA STUFF AGAIN

%%%\maketitle
%%%
%%%%%%%%%%%%%%%%%%%%%%%%%%%%%%%%%%%%
%%%%%% \input{mhf-abstract}
%%%%%%%%%%%%%%%%%%%%%%%%%%%%%%%%%%%%
%%%
%%%\setcounter{tocdepth}{1}
%%%\tableofcontents
%%%
%%%\maketitle
%%%
%%%%\input{mhf-abstract}
%%%
%%%\setcounter{tocdepth}{1}
%%%\tableofcontents
%%%
%%%%%%%%%%%%%%%%%%%%%%%%%%%%%%%%%%%%
%%%%%% \input{mhf-introAug22}
%%%%%%%%%%%%%%%%%%%%%%%%%%%%%%%%%%%%
%%%
\section{Introduction}

\subsection*{Weight homology}

In their highly cited work \cite[\S 2]{GS}, Gillet and Soul{\'e} associate to every variety $X$ over a perfect field%
\footnote{In their article they state their result for fields of characteristic zero since they use resolution of singularities, however if one uses $\zpi$-coefficients (resp. $\bQ$-coefficients) one can replace this with the appropriate theorem of Gabber (resp. de Jong). Alternatively, one can work under the assumption that the field admits resolution of singularities.} %
$k$, a complex $W(X)$ of (effective) Chow motives, and show it is well-defined up to chain homotopy. Using this, in \cite[\S 3]{GS} they define the \emph{weight homology} (with compact support) of $X$ with ($\Gamma$-coefficients) as
\[H^{GS}_n(X, \Gamma) = H_n  (\Gamma(W(X))  ) \]
 where $\Gamma: Chow^\eff \to \cA$ is an additive covariant functor from effective Chow motives $Chow^\eff$ to an abelian category $\cA$. A distinguished property of Gillet and Soul{\'e}'s weight homology is :
For $X$ projective smooth over $k$, we have
\[
H^{GS}_n(X, \Gamma)  =\left\{ \begin{array}{lr}
\Gamma(X) & \text{ for } n=0, \\
0 & \text{ for } n\ne 0. \end{array} \right.  
\]

On the other hand, for a variety $X$ over a finite field $k$, Geisser defines in \cite[\S 8]{Ge2} a homology theory he calls the Kato-Suslin homology with coefficients in an abelian group $A$ as
\[ H^{KS}_{n}(X, A) = H_n((\underline{C}_\ast(X)(\overline{k})\otimes A)_G) \]
where $\underline{C}_\ast(X)(\overline{k}) = \hom_{Cor}(\Delta^\ast_{\overline{k}}, \overline{k} {\times_k} X)$ is the Suslin complex of $\overline{X}$ and $G$
is the Weil group of $k$ (i.e., the subgroup of $Gal(\overline{k} / k)$ generated by the Frobenius). 
In \cite{Ge2} Geisser shows the following (see Prop.~\ref{Ge81}): 
Assume either that $A$ is torsion prime to $\ch(k)$, or that 
Parshin's conjecture $P_0$ stated in Prop.~\ref{Ge81}(\ref{Ge81:Q}) holds.
Then, for $X$ smooth over $k$, we have
\[
H^{KS}_{n}(X, A)  =\left\{ \begin{array}{lr}
A^{c(X)} & \text{ for } n=0 \\
0 & \text{ for } n\ne 0 \end{array} \right.  
\]
where $c(X)$ is the set of connected components of $X$.

\medbreak

The first main purpose of this article is to introduce a general homology theory for motives from 
which the above two can be recovered as special cases.

\ifShowNewSectionTwoPointTwoChanges
% BEGIN BLUE COLOR
{ \color{blue}
\fi
Recall that in \cite{Bo2} Bondarko considers a dg-enhancement of the category of motives of smooth projective varieties (cf. Theorem~\ref{BondgEnhancement}, \cite[\S 2.4, \S 3.1]{Bo2}). We observe that a consequence of this is an induced equivalence of categories (see \S\ref{Notationconventions} for the notation):
\ifShowNewSectionTwoPointTwoChanges
}  % END BLUE COLOR
\fi

\begin{theoNo}[{Thm.~\ref{equivalenceHomologicalFunctors}}] \label{prop:homFunctorEquivalence}
Let $k$ be a perfect field of exponential characteristic $p$. The functor
\begin{equation} \label{equa:equivalences}
\left \{ \begin{array}{cc} 
\textrm{homological functors } \\ 
\textrm{on } \DMeffgmkZp
\end{array} \right  \}
\to
\left \{ \begin{array}{cc} 
\textrm{additive functors } \\ 
\textrm{on } \CHMp
\end{array} \right  \}
\end{equation}
induced by the inclusion $\CHMp \to \DMeffgmkZp$ induces an equivalence when restricted to the full subcategory of homological%
\footnote{By homological functor we mean a covariant functor with values in an abelian category which preserves sums and sends distinguished triangles to long exact sequences (see Def.~\ref{def(N)}).} %
 functors $\cH$ satisfying:
\begin{enumerate}
 \item[(SPA)] $\cH(M(X)[n]) = 0$ when $X$ is smooth and projective and $n \neq 0$.
\end{enumerate}
\end{theoNo}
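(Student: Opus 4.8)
The plan is to produce an explicit quasi-inverse to the restriction functor \eqref{equa:equivalences}, which I denote $\mathrm{Res}$, using the Chow weight structure on motives together with the \emph{strong} (i.e.\ genuinely exact) weight complex functor supplied by Bondarko's dg-enhancement. Fix an abelian category $\cA$ as target and write $\cT=\DMeffgmkZp$ and $\mathcal Hw=\CHMp$. By the results recalled in Theorem~\ref{BondgEnhancement}, together with the $\zpi$-linear motivic resolution of singularities of \cite{Ke} (see also \cite{Bo2}), $\cT$ is idempotent complete and carries a bounded weight structure $w$ whose heart is $\mathcal Hw$, and the dg-enhancement promotes the weight complex functor to an exact functor
\[
t\colon\ \cT\ \longrightarrow\ K^b(\mathcal Hw)
\]
which restricts on $\mathcal Hw$ to the canonical embedding (placing a Chow motive in cohomological degree $0$); since $t$ is a genuine functor, the associated weight spectral sequence of any homological functor on $\cT$ is functorial in the motive. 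The first, trivial, step is to restate (SPA): since every object of $\mathcal Hw$ is a direct summand of $M(X)$ for some smooth projective $X$, additivity of a homological functor $\cH$ shows (SPA) is equivalent to the condition $\cH(P[n])=0$ for all $P\in\mathcal Hw$ and all $n\neq 0$; that is, $\cH$ annihilates every shift of a heart object.

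Given an additive functor $F\colon\mathcal Hw\to\cA$, let $F_\ast\colon K^b(\mathcal Hw)\to K^b(\cA)$ be the exact functor applying $F$ term by term, and set
\[
\Psi(F)\ :=\ H_0\circ F_\ast\circ t\ \colon\ \cT\longrightarrow\cA ,
\]
where $H_0$ denotes the degree-$0$ part of the (co)homology of a bounded complex. Then $\Psi(F)$ is a composite of the exact functors $t$ and $F_\ast$ with the homological functor $H_0$, hence is homological; and since $t(P[n])\cong t(P)[n]$ is the complex with $P$ in cohomological degree $-n$ and zero elsewhere, $\Psi(F)(P[n])=0$ for $n\neq0$, so $\Psi(F)$ satisfies (SPA) by the reformulation above. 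Because $t|_{\mathcal Hw}$ is the degree-$0$ embedding, $\Psi(F)|_{\mathcal Hw}\cong F$ naturally, so $\mathrm{Res}\circ\Psi\cong\mathrm{id}$; in particular $\mathrm{Res}$ is essentially surjective. The assignment $F\mapsto\Psi(F)$ is functorial in $F$, so to finish it suffices to exhibit a natural isomorphism $\Psi(\cH|_{\mathcal Hw})\cong\cH$ for every homological $\cH$ satisfying (SPA).

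This last isomorphism is the crux, and is where the weight structure does the work. For $M\in\cT$, consider Bondarko's weight spectral sequence computing $\cH_\ast(M)$ (writing $\cH_n(M)=\cH(M[-n])$): its $E_1$-page is assembled from the groups $\cH$ of the shifted weight-graded pieces of $M$, each of which lies in some shift $\mathcal Hw[i]$ of the heart. By the reformulated (SPA) these groups vanish except along a single line of the $E_1$-page, along which the $d_1$-differential is precisely the differential of the complex $(\cH|_{\mathcal Hw})_\ast\, t(M)$. Hence the spectral sequence degenerates at $E_2$, with $E_2=E_\infty=H_\ast((\cH|_{\mathcal Hw})_\ast\, t(M))$, and the degree-$0$ part of the abutment reads
\[
\cH(M)\ \cong\ H_0((\cH|_{\mathcal Hw})_\ast\, t(M))\ =\ \Psi(\cH|_{\mathcal Hw})(M).
\]
Functoriality of $t$ makes this isomorphism natural in $M$, so $\Psi\circ\mathrm{Res}\cong\mathrm{id}$. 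Together with $\mathrm{Res}\circ\Psi\cong\mathrm{id}$ this shows $\mathrm{Res}$ is an equivalence.

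The one genuinely delicate point is the functoriality invoked in the last step: over a bare triangulated category the weight complex is canonical only up to chain-homotopy equivalence of individual objects, so neither it nor the weight spectral sequence is manifestly functorial, and the argument would collapse. This gap is exactly what Bondarko's dg-enhancement of $\SPCk$ closes, which is why Theorem~\ref{BondgEnhancement} is the essential input; the remaining ingredients — the summand reformulation of (SPA), the verification that $\Psi(F)$ is homological and satisfies (SPA), and the identification of the degenerate spectral sequence — are routine. One should only take care that the homological grading on $\cT$, the weight grading, and the cohomological grading on $K^b(\mathcal Hw)$ are normalized consistently, so that ``degree $0$'' means the same thing on both sides of the final isomorphism.
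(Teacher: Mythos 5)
Your proof is correct, and it follows the route that the paper acknowledges but deliberately does not take: you work through Bondarko's weight structure on $\DMeffgmkZp$ and the degeneration of the weight spectral sequence under (SPA), which is essentially the content of \cite[Cor. 2.3.4]{Bo3}. The paper instead proves a dg-level statement (Lemma~\ref{lemm:dgdecompose}): for a negative dg-category $J$ and a $J$-acyclic homological functor $\cH$ on $\Tr(J)$, a direct diagram chase with the b{\^e}te truncation triangles \eqref{equa:decompTriTruncate}, \eqref{equa:geqTri}, \eqref{equa:leqTri} of a twisted complex $P$ yields $\cH(P)\cong\ker(\cH(P^0)\to\cH(P^1))/\im(\cH(P^{-1})\to\cH(P^0))$, naturally in $P$ and $\cH$; Theorem~\ref{equivalenceHomologicalFunctors} is then immediate from Theorem~\ref{BondgEnhancement}. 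The two arguments rest on the same essential input (the dg-enhancement, which both you and the paper need for functoriality — your weight Postnikov tower \emph{is} the system of b{\^e}te truncations, and your $E_1$-page with its $d_1$ is exactly the complex $\cH(P^\bullet)$ appearing in the lemma), so the difference is one of packaging rather than substance. What your route buys is generality and brevity modulo the black box of the weight-structure machinery; what the paper's route buys is a self-contained elementary proof and, importantly, the explicit natural isomorphism \eqref{wssIsos} in terms of a chosen complex $V^\bullet$ of smooth projective varieties, which is used directly later (e.g.\ in the comparison with Gillet--Soul{\'e} weight homology, Theorem~\ref{mainthm2}). Your $E_2=E_\infty$ identification recovers the same formula, so nothing is lost; just make sure, as you note, that the homological indexing of $\cH_n$, the weight index, and the cohomological degree on $K^b(\CHMp)$ are aligned, and that you justify the reformulation of (SPA) for arbitrary heart objects via the fact that every object of $\CHMp$ is a direct summand of $M(X)$ for a (possibly disconnected) smooth projective $X$ — which you do.
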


\ifShowNewSectionTwoPointTwoChanges
% BEGIN BLUE COLOR
{ \color{blue}
\fi
Thanks to the dg-enhancement that Bondarko considers, Theorem~\ref{equivalenceHomologicalFunctors} is a corollary of a more general version for negative dg-categories (Lemma~\ref{lemm:dgdecompose}). Alternatively, using Bondarko's weight structure on $\DMeffgmkZp$, one could also obtain Theorem~\ref{equivalenceHomologicalFunctors} as a corollary of \cite[Cor. 2.3.4]{Bo3}, a more general version for triangulated categories equipped with a weight structure.
\ifShowNewSectionTwoPointTwoChanges
} % END BLUE COLOR
\fi

For any additive covariant functor $\Gamma: \CHMp \to \cA$ we write 
\begin{equation}\label{def.WH.intro}
\wH - \Gamma: \DMeffgmkZp \to \cA
\end{equation}
for its associated homological functor (Def.~\ref{def.weighthomology}) and write $\wHH n - \Gamma = \wH {-[n]} \Gamma$ for $n \in \bZ$. For an object $A$ of the category $\AB$ of abelian groups, we put $\wHH n - A =\wHH n - {\Gamma_A}$ where \mbox{$\Gamma_A:\CHMp \to \AB$} is the additive functor such that $\Gamma_A(X)=A^{c(X)}$ for $X$ smooth projective, where $c(X)$ is the set of connected components of $X$.

\ifShowNewSectionTwoPointTwoChanges
% BEGIN BLUE COLOR
{ \color{blue}
\fi
Bondarko has observed that the composition $(t \circ M^c)(-)$ (where $t$ is his weight complex functor -- see Equation~\ref{equa:weightComplex}) is isomorphic to Gillet-Soul{\'e} weight complex (\cite[Prop. 6.6.2]{Bo2}). Using similar methods, we can show that Gillet-Soul{\'e}'s weight homology can be recovered from \eqref{def.WH.intro}. Furthermore, we also show that Geisser's Kato-Suslin homology can be recovered from \eqref{def.WH.intro}.
\ifShowNewSectionTwoPointTwoChanges
} % END BLUE COLOR
\fi

%While Bondarko compares his weight complex to the Gillet-Soul{\'e} weight complex, this theorem provides a way of moving additive functors on ***$CHOW$*** to homological functors on ***$DM$*** so that we can compare Gillet-Soul{\'e}'s weight homology to appropriate homology theories on ***$DM$***

%Our theory recovers the other two:

%Our theory recovers Gillet-Soul{\'e}s weight homology over a perfect field, and also Geisser's Kato-Suslin homology over a finite field when $A$ is torsion prime to the exponential characteristic of $k$:

\begin{theoNo}[{Thm. \ref{mainthm2}, Thm. \ref{mainthm1}}]
Let $k$ be a perfect field and $\Sch / k$ the category of finite type separated $k$-schemes.
\begin{itemize}
\item[(1)]
For an additive covariant functor $\Gamma: \CHMp \to \cA$, there are canonical isomorphisms for $X\in \Sch / k$
\[ H^{GS}_n(X, \Gamma) \cong H^W_n(M^c(X), \Gamma) \]
which are covariantly functorial for proper morphisms in $\Sch / k$.
\item[(2)]
Assume $k$ is a finite field. If $A$ is a torsion abelian group of torsion prime to the characteristic,
there are canonical isomorphisms for $X\in \Sch / k$
\begin{equation} \label{KSW}
H^{KS}_n(X, A) \cong H^W_n(M(X), A)
\end{equation}
which are covariantly functorial for morphisms in $\Sch / k$.
Moreover, for a general $\bZ[1/p]$-module $A$, the existence of isomorphisms such as \eqref{KSW} follows from Parshin's conjecture $P_0$.
\end{itemize}
\end{theoNo}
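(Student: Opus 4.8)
The plan is to prove the two comparison statements separately, in each case reducing to a comparison of complexes of Chow motives (resp.\ of abelian groups) and then applying the machinery of \eqref{def.WH.intro} together with Theorem~\ref{prop:homFunctorEquivalence}.

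For part (1), I would start from Bondarko's observation that $t \circ M^c$ is isomorphic to the Gillet--Soul\'e weight complex functor \cite[Prop.~6.6.2]{Bo2}, where $t$ is the weight complex functor of Equation~\ref{equa:weightComplex}. Given this, the definition \eqref{def.WH.intro} of $\wH{-}\Gamma$ is (up to the equivalence of Theorem~\ref{prop:homFunctorEquivalence}) precisely $H_n$ of $\Gamma$ applied to the weight complex; so one has $H^W_n(M^c(X),\Gamma) = H_n(\Gamma(t(M^c(X))))$, and comparing with $H^{GS}_n(X,\Gamma) = H_n(\Gamma(W(X)))$ the claim follows from Bondarko's identification $t(M^c(X)) \simeq W(X)$ in the homotopy category of complexes of effective Chow motives. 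The only real work is checking functoriality: I would verify that Bondarko's isomorphism is compatible with the covariant functoriality of $M^c$ for proper morphisms and with the functoriality of the Gillet--Soul\'e construction, which is a diagram-chase at the level of the dg-enhancement of Theorem~\ref{BondgEnhancement}. Note the shift in variance relative to part (2): here we use $M^c$ (Borel--Moore / compactly supported motive) matching that $H^{GS}$ is weight \emph{homology with compact support}.

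For part (2), with $A$ torsion prime to $p$, I would compare both sides to $\et$-motivic homology. On the Kato--Suslin side, Geisser's $H^{KS}_n(X,A) = H_n((\underline{C}_\ast(X)(\overline k)\otimes A)_G)$ is, for torsion $A$ prime to $\chara(k)$, computed by taking $G$-coinvariants (equivalently, by the Hochschild--Serre / Weil-group spectral sequence, which degenerates appropriately over a finite field) of the geometric Suslin complex, which over $\overline k$ with such coefficients computes $\et$-motivic homology of $\overline X$. On the weight-homology side, $\wHH n{M(X)}A = H_n(\Gamma_A(t(M(X))))$ where $\Gamma_A(Y) = A^{c(Y)}$; since $\Gamma_A$ on a smooth projective $Y$ only sees $\pi_0$, and since for such coefficients the zeroth $\et$-motivic homology of a smooth projective variety over $\overline k$ is $A^{c(Y)}$ while higher ones vanish (this is where the characteristic hypothesis and rigidity enter), one gets that $\wHH n{M(X)}A$ agrees with the homology of the complex obtained by applying ``$\pi_0$ with $A$-coefficients, then $G$-coinvariants'' to the weight complex of $M(X)$. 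Identifying this with $H^{KS}_n(X,A)$ then amounts to comparing two resolutions of the same $\et$-motivic homology: the weight complex and the Suslin complex. I would make this precise by invoking Proposition~\ref{Ge81} (the statement of Geisser that pins down $H^{KS}$ on smooth schemes) to reduce to smooth $X$, then to smooth projective $X$ via the weight filtration, where both sides collapse to $A^{c(X)}$ in degree $0$ by construction, and finally propagate the comparison along the distinguished triangles realizing the weight complex using that both functors are homological. For general $\bZ[1/p]$-module coefficients the same argument applies once one assumes Parshin's conjecture $P_0$ from Proposition~\ref{Ge81}(\ref{Ge81:Q}), which is exactly the input needed to guarantee the vanishing of higher $\et$-motivic homology of smooth projective varieties over $\overline k$ used above.

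The main obstacle, in both parts, is the functoriality and the precise dictionary between ``Suslin-type'' complexes and ``weight complexes.'' For (1) this is the compatibility of Bondarko's isomorphism $t\circ M^c \simeq W$ with proper pushforward; for (2) it is showing that the Weil-group-coinvariants of the geometric Suslin complex and the $\Gamma_A$-image of the weight complex of $M(X)$ are canonically quasi-isomorphic in a way functorial for all morphisms in $\Sch/k$ — this requires carefully tracking the comparison through Bondarko's dg-enhancement and through the rigidity isomorphism for torsion \'etale motivic homology, and it is where I expect most of the technical effort (and the use of the hypotheses on $A$ and on $\chara k$) to be concentrated.
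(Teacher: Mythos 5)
For part (1) your route is fine and is, as the paper itself notes in the remark following Theorem~\ref{mainthm2}, essentially the same as the paper's: the paper argues directly with the isomorphism $M(\cV_\bullet)\to M^c(X)$ of Equation~\eqref{smoothcdhReplacement} (cdh/$l$dh descent plus the localisation triangles) and the explicit formula \eqref{wssIsos}, while you route it through Bondarko's identification $t\circ M^c\simeq W$; the key input ($M(\cV_\bullet)\cong M^c(X)$) is the same in both cases, so nothing is lost.

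For part (2) your skeleton (both sides agree on smooth projective varieties, are concentrated in degree zero there, and one then proceeds by d{\'e}vissage) is the right one, but there is a concrete missing step, which is in fact the main content of the paper's proof of Theorem~\ref{mainthm1}: before you can ``propagate the comparison along the distinguished triangles realizing the weight complex using that both functors are homological,'' you must know that $H^{KS}_\ast(-,A)$ \emph{is} a homological functor on $\DMeffgmkR$ restricting to $\Gamma_A$ on $\CHMp$. This requires (i) extending $R\Gamma^{KS}(-,A)$ to a triangulated functor $K^b(\SCk)\to K^b(\AB)$ via $\Tot$, using that $c_{\equi}(X/k)$ is functorial for finite correspondences; (ii) checking that it kills the complexes $(\bA^1_X\to X)$ and $(U\cap V\to U\oplus V\to U\cup V)$ so that it factors through $\DMeffgmkR$; and (iii) a cdh/$l$dh descent argument identifying $H^{KS}_i(X,A)$ with $H^{-i}(R\Gamma^{KS}(M(X),A))$ for \emph{singular} $X$ --- without (iii) you cannot ``reduce to smooth $X$'' as you propose, since Prop.~\ref{Ge81} only tells you what $H^{KS}$ is on smooth schemes and says nothing about how to compute it on singular ones, and without (i)--(ii) the distinguished triangles you want to propagate along are not available to the Kato--Suslin side. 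Once (i)--(iii) are in place, the comparison is immediate from the uniqueness clause of Theorem~\ref{equivalenceHomologicalFunctors} together with $\SP$-acyclicity of $R\Gamma^{KS}(-,A)$, which is exactly where Prop.~\ref{Ge81} (hence the torsion hypothesis or (P$_0$)) enters; your additional detour through {\'e}tale motivic homology and rigidity is not needed for the comparison itself, only for the proof of Prop.~\ref{Ge81}.
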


See \S\ref{WcHSch} and \S\ref{WHSch} for a number of properties that our weight homology (and consequently, Gillet-Soul{\'e} weight homology and Geisser's Kato-Suslin homology) satisfies. In particular, we notice that $H^W_n(M^c(-), \Gamma)$ is contravariant for quasi-finite flat morphisms in $\Sch / k$, and for arbitrary morphisms in $\Sm / k$ between schemes of the same dimension (our properties (Wc3) and (Wc4)). 
The corresponding contravariance of Gillet-Soul{\'e}'s theory $H^{GS}_n(-, \Gamma)$ seems to be new, and not at all obvious from its construction.

For functors $\Gamma$ on $\CHMp$ such as $\Gamma_A$ which are $\bP^1$-invariant, the associated weight homology functor satisfies a number of additional interesting properties: it vanishes on all twisted motives, it is birationally invariant on smooth (not necessarily projective) schemes, and the weight homology of (not necessarily projective) smooth schemes is concentrated in degree zero (cf. Lem.~\ref{HomFunctorProperties} and Thm.~\ref{thm.N2}).

\subsection*{Cycle maps}

For a ring $R$, we write $\DMeffkR$ for the tensor triangulated category of (unbounded) effective motives with $R$-coefficients constructed by Voevodsky \cite{V1}. The canonical functor
\[ \alpha^* : \DMeffkR \to \DMeffkRet \] 
connecting Voevodsky's category of motives and its {\'e}tale version admits a right adjoint $\alpha_*$. The motivic homology and {\'e}tale motivic homology with $R$-coefficients of an object $C$ in $\DMeffkR$ are defined as
\begin{align*}
%\label{MHR} 
\MHR C i &= \hom_{\DMeffkR}(R[i], C), \quad \textrm{ and } \\ 
\MHRet C i &= \hom_{\DMeffkR}(R[i], \alpha_*\alpha^*C)
\end{align*}
where $R[i] = M(k)[i]$ is a shift of the unit for the tensor structures. 
The second half of the article %
%Sections~\ref{sec:homology}, \ref{etcyclemap}, \ref{effmotive}, and \ref{galoisdescent} 
concerns the following maps functorial in $C\in\DMeffkR$: 
\[
\alpha^*: \MHR C i \to \MHRet C i 
\]
induced by the unit $\id \to \alpha_*\alpha^*$ of the adjunction $(\alpha^*, \alpha_*)$.
\medbreak

Recall that motivic homology is related to fundamental invariants of schemes. 
For example, if $p$ is invertible in $R$, or if we assume strong resolution of singularities,
for $X \in \Sch / k$, $r, i \in \Z$, 
%and $n$ coprime to the characteristic of $k$, 
there are canonical isomorphisms (\cite[\S 2.2]{V1} and \mbox{\cite[Chap. 5]{Ke}})
\begin{align}
\MHR {M(X)} i &\cong H_i^S(X, R) \label{MHS} \\
\MHR {M^c(X)(r)} i &\cong CH_{-r}(X, i + 2r; R) \label{MHhigherChow}
\end{align}
where $H_i^S(X, R)$ is the Suslin homology with $R$-coefficients (\cite{SV}), and $CH_\ast(X, \bullet; R)$ are the higher Chow groups with $R$-coefficients (\cite{B}).

On the ther hand, for $R=\nz$ with $n$ coprime to $p$, the \'etale motivic homology
$\MHnzet C i $ for $C=M(X)$ (resp. $M^c(X)$) are identified with the dual of \'etale cohomology
(resp. \'etale cohomology with compact support) (see Prop.~\ref{prop4.etreal} and Lem.~\ref{lem.ethom}). 

\medbreak

The main theorem is:

\begin{theoNo}[{Thm.~\ref{mainthm.etreal}}] \ 
Let $p$ be the exponential characteristic of $k$, let $n$ be an integer coprime to $p$, and let $R$ be $\bZ / n \bZ$ or $\bZ[1 / p]$.
\begin{itemize}
\item[(1)]
If $k$ is algebraically closed, $\alpha^*$ is an isomorphism.
\item[(2)]
If $k$ is finite and $C\in \DMeffgmkR$, there is a canonical long exact sequence
 \[
\cdots \to \MHR C i \rmapo{\alpha^*} \MHRet C i  \to \WHL C {i+1} \to \MHR C {i-1} \to \cdots
\]
which is functorial in $C$ where
\begin{equation*}%\label{eq.Lambda}
\Lam=
\left\{ \begin{array}{lr}
\bQ/\bZ[1/p] & \text{ if } R=\bZ[1/p],\\
\nz & \text{ if } R=\nz. \end{array} \right.  
\end{equation*}
\end{itemize}
\end{theoNo}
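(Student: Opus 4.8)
The plan is to reduce both statements to a statement about the cone of $\alpha^*$, applied to the unit object, and to identify that cone with a shifted weight-homology functor. Write $\beta \colon \DMeffkR \to \DMeffkR$ for the composite $\alpha_*\alpha^*$ with its unit map $u \colon \id \to \beta$, and for $C \in \DMeffkR$ let $N(C) = \operatorname{cone}(u_C)[-1]$, so that there is a functorial long exact sequence
\[
\cdots \to \hom(R[i+1], \beta C) \to \hom(R[i], N(C)) \to \MHR C i \rmapo{\alpha^*} \MHRet C i \to \cdots.
\]
Thus (1) is the assertion that $N(C) \simeq 0$ for all $C$ when $k$ is algebraically closed, and (2) amounts to identifying $\hom(R[i], N(C))$ with $\WHL C {i+1}$ when $k$ is finite and $C$ is geometric. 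The first step is the algebraically closed case: here $\alpha^*$ on torsion coefficients coprime to $p$ is essentially the rigidity/Suslin--Voevodsky comparison (\cite{SV}, and \cite[Chap.~5]{Ke} in the non-smooth generality), giving that $\alpha^*$ is an equivalence on $\DMeffknzet$-relevant subcategories, hence $u$ is an isomorphism on the relevant homology groups; for $R = \bZ[1/p]$ one passes to the torsion coefficient systems $\bZ/n$, $n$ coprime to $p$, using that $\alpha^*$ commutes with the relevant truncations and that rationally $\alpha^*$ is already an equivalence (motives with $\bQ$-coefficients agree étale-locally). This establishes (1) and, simultaneously, shows $N(-)$ is a "torsion" phenomenon: $N(C) \otimes \bQ \simeq 0$ and $N(C)$ is built from the $\bZ/n$ pieces.

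For (2), the key is that over a finite field $k$, with $\bar k$ an algebraic closure and $G$ the Weil group, there is a base-change/descent triangle relating $\alpha_*\alpha^* C$ over $k$ to the $G$-homotopy-fixed points of its pullback to $\bar k$, where by part (1) the étale and Nisnevich theories already agree. Concretely, using the Hochschild--Serre type spectral sequence for $\Spec \bar k \to \Spec k$ — which for the Weil group $G \cong \bZ$ collapses to a two-term complex $[\,\cdot \xrightarrow{1 - \Frob} \cdot\,]$ — one gets that $N(C)$ is computed by the cone of $1 - \Frob$ acting on the étale-motivic homology of $C_{\bar k}$ with coefficients reduced mod the "defect" between $R$ and its torsion. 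This is exactly the mechanism that produces the coefficient switch from $R$ to $\Lambda$ in the statement: for $R = \bZ/n$ one stays with $\bZ/n$, while for $R = \bZ[1/p]$ the cone of multiplication-by-torsion phenomena turns $\bZ[1/p]$ into $\bQ/\bZ[1/p]$ (a shift of the statement "$\operatorname{cone}(\bZ[1/p] \xrightarrow{} \bQ) \simeq \bQ/\bZ[1/p]$"). One then recognizes the resulting functor $C \mapsto \hom(R[i+1], N(C))$ as a homological functor on $\DMeffgmkZp$ (after base change it kills $M(X)[n]$ for $X$ smooth projective and $n \neq 0$, since there the étale and Nisnevich homology are concentrated in degree zero and $\Frob$ acts through a finite quotient, cf. (SPA)), so by Theorem~\ref{equivalenceHomologicalFunctors} it is $\wHH {i+1} - \Gamma$ for the functor $\Gamma$ sending a smooth projective $X$ to $H^W_0$ with $\Lambda$-coefficients; a direct computation of $\Gamma$ on $\Spec$ of a finite extension, where everything is explicit, pins down $\Lambda$ as claimed. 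Functoriality in $C$ is automatic since every construction (the unit $u$, the base change, the Weil-group complex) is functorial.

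The main obstacle I expect is the precise bookkeeping in the finite-field step: making the descent spectral sequence for the Weil group interact correctly with the adjunction $(\alpha^*, \alpha_*)$ at the level of the \emph{integral} (i.e. $\bZ[1/p]$) motive rather than just its mod-$n$ reductions, and checking that the resulting two-term complex really has the coefficients $\Lambda$ and not, say, $\widehat{\bZ}^{(p')} \otimes (-)$ or some completion thereof. This requires knowing that $\alpha_*\alpha^* R$ over $\bar k$ is, in the relevant range, $\bigoplus_{n} \bZ/n$-type and that the colimit over $n$ assembles to $\bQ/\bZ[1/p]$ exactly — i.e. controlling a $\varinjlim$ of torsion against the failure of $\alpha^*$ to be an equivalence with $\bZ[1/p]$-coefficients (which is a $\bQ/\bZ$-shaped defect, not a profinite one, precisely because $\DMeff$ has no higher $\bZ_\ell$-completion pathology after inverting $p$). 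Once that identification is in hand, the reduction to Theorem~\ref{equivalenceHomologicalFunctors} via (SPA) is formal, and the explicit computation over finite extensions is routine.
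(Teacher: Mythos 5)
Your overall skeleton coincides with the paper's: your $N(C)$ is exactly the paper's ``co-{\'e}tale part'' $\Theta(C)$ fitting in the distinguished triangle \eqref{Thetadt}, part (1) becomes the vanishing $\MHR{\Theta(C)}{i}=0$ over an algebraically closed field, and part (2) becomes the identification of $C\mapsto \MHR{\Theta(C)}{i}$ with $\WHL{C}{i+1}$ via Theorem~\ref{equivalenceHomologicalFunctors}. Your part (1) --- reduce to $R=\nz$ using that $\Theta$ is torsion, then to $M(X)$ with $X$ smooth projective by compactness of $R[i]$ and density of such generators, then invoke the Suslin--Voevodsky comparison --- is essentially the paper's route (Lemma~\ref{lemm:Thetator}, Claim~\ref{claim1.etreal}(1), Theorem~\ref{etcycl-propersmooth}(i)).

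The gap is in part (2), at the verification of (SPA). You claim $\cH(M(X)[n])=0$ for $X$ smooth projective and $n\neq 0$ ``since there the {\'e}tale and Nisnevich homology are concentrated in degree zero and $\Frob$ acts through a finite quotient.'' Neither clause holds: for smooth projective $X$ over a finite field the groups $\MHnz{M(X)}{i}$ (higher Chow groups) are nonzero in many degrees, and what is actually needed is that $\alpha^*:\MHnz{M(X)}{i}\to\MHnzet{M(X)}{i}$ is an isomorphism for all $i\neq -1$. This is Theorem~\ref{etcycl-propersmooth}(iii), and it is \emph{not} a formal consequence of the algebraically closed case plus Weil descent: {\'e}tale motivic homology satisfies Galois descent from $\bar k$ to $k$, but Nisnevich motivic homology does not --- that failure is precisely what $\Theta$ measures --- so your plan to compute $N(C)$ as the cone of $1-\Frob$ acting on data over $\bar k$ presupposes the comparison you are trying to establish (indeed, the paper derives the descent sequence for Suslin homology in Prop.~\ref{Ge81} \emph{from} the main theorem, not the other way around). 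The paper instead inputs the finite-field cycle-map theorems of Jannsen--Saito and Kerz--Saito (Theorem~\ref{etcyclhigherChow}(ii),(iii), resting on the Kato conjecture), and then deals with the one remaining degree $i=-1$, where $\alpha^*$ genuinely fails to be surjective, by a second adjunction: the reduced/field-of-constants triangle $C_{\red}\to C\to L\pi_0(C)$, the vanishing $\MHR{\Theta(C_{\red})}{i}=0$ of Claim~\ref{claim1.etreal}(2), and the explicit computation of $\MHR{\Theta(L\pi_0 M(X))}{i}$ in Claim~\ref{claim2.etreal} via $M(\Spec L)_{\tor}=\Lam_L[-1]$ and Tate duality --- which is also where the coefficient switch from $R$ to $\Lam$ and the shift by $+1$ actually come from. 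Without a substitute for these inputs your argument does not close.
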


In view of the remarks above the theorem, the first part of the theorem implies that if $k$ is algebraically closed, then there are canonical isomorphisms for $X\in \Sch/k$:
\[
\begin{aligned}
 & H^S_i(X,\nz) \cong H^{i}(X_{\et},\nz)^*,  \textrm{ and }\\
 & \CH_0(X,i;\nz) \cong H^{i}_c(X_{\et},\nz)^*.
\end{aligned}
\] 
One can identify these isomorphisms with those in \cite{SV} and \cite{Su}. (This identification is essentially a consequence of the Suslin homology/cohomology pairing and the {\'e}tale homology/cohomology pairing being compatible. See \cite[Prop. 15 and 17]{Ke2} for the details).
Our proof of the above theorem is simpler than those in \cite{SV} and \cite{Su}, but depends on the Beilinson-Lichtenbaum conjecture due to Suslin-Voevodsky \cite{SVBlochKato} and Geisser-Levine \cite[Thm.1.5]{GL} which relies on the Bloch-Kato conjecture proved by Rost-Voevodsky (see the proof of Lem.~\ref{etcyclhigherChow}).

For a finite field $k$ and $R=\Lam=\nz$ with $n$ coprime to $p$, the map $\alpha^*$ gives rise to maps
\[
H^S_i(X,\nz) \to H^{i+1}(X_{\et},\nz)^*,\quad \CH_0(X,i;\nz) \to H^{i+1}_c(X_{\et},\nz)^*,
\] 
and the second part of the above theorem relates the kernel and cokernel of these maps with weight homology and
weight homology with compact support of $X$ introduced in \S\ref{wthomologysch}.
In particular, if $X$ is smooth over $k$, we get canonical isomorphisms
\begin{equation*}\label{HSetcohsmooth.intro}
H^S_i(X,\nz) \cong H^{i+1}(X_{\et},\nz)^*\qfor i\in \bZ.
\end{equation*}
See Cor.~\ref{cor.etreal} for the details and Rem.~\ref{rem.Geisser} for relation of these results with 
\cite{Ge2}.

\medbreak

In order to show the second part of the above theorem,
we introduce new homology functors which seem to be of independent interest. 
%The new homology functor works over an arbitrary base field, but only agrees with the weight homology defined as \eqref{def.WH.intro} over a finite field.
The idea is to introduce an endo-functor $\Theta:\DMeffkR\to \DMeffkR$ equipped with a natural transformation
$\Theta\to id$, which gives rise to  
distinguished triangles (Equation~\ref{Thetadt}) 
\[ \Theta C \to C \to \alpha_*\alpha^*C \to \Theta C[1] \]
for $C\in \DMeffkRet$, allowing one to define the ``co-{\'e}tale'' part $\Theta C$ of a motive $C$. 
Then the second part of the above theorem is equivalent to the assertion that 
if $k$ is finite, there are canonical isomorphisms for $C\in \DMeffgmkR$: 
\begin{equation}\label{eqintro.thm.wHTheta} 
\MHR {\Theta(C))} i \isom \WHL C {i+1}\qfor i\in \bZ,
\end{equation} 
which are natural in $C$ (see Theorem \ref{thm.wHTheta}). 
To show this, we use $d_{\le 0}\DMeffkR$, the localizing subcategory of $\DMeffkR$ generated by motives of smooth varieties of dimension zero (cf. \cite[\S 3.4]{V1}).
By \cite{abv} there is a projection functor $L\pi_0: \DMeffkR\to d_{\le 0}\DMeffkR$, the ``field of constants'' part 
(cf. Equation~\eqref{pi0M(X)}).
Similarly as above, we get again distinguished triangles (Equation~\ref{redTri})
\[ C_{red} \to C \to L\pi_0(C) \to C_{red}[1] \]
for $C\in \DMeffkRet$, allowing one to define the ``co-dimension-zero'' or ``reduced'' part $C_{red}$ of a motive $C$. 
Surprisingly, if $k$ is a finite field, $\MHR {\Theta(C_{red})} i=0$ for all $i\in \ZZ$,
or equivalently the canonical morphism
\[ \Theta (C) \to \Theta(L\pi_0(C)) \]
induces isomorphisms $\MHR {\Theta (C) } i\cong \MHR {\Theta (L\pi_0(C)) } i$ for all $i\in \ZZ$ (Claim \ref{claim1.etreal}(2), Equation~\eqref{piNotNec}).
Heuristically, this can be interpreted as saying that over a finite field, all the difference between the motivic homology and the {\'e}tale motivic homology is contained in the field of constants.
Finally the desired isomorphisms \eqref{eqintro.thm.wHTheta} are obtained by showing 
canonical isomorphisms for $C\in \DMeffgmkR$: 
\begin{equation}\label{eqintro1.thm.wHTheta} 
\MHR {\Theta(L\pi_0(C))} i \isom \WHL C {i+1}\qfor i\in \bZ,
\end{equation} 
which are natural in $C$ (see \eqref{piNotwHomology}).

\subsection*{Outline}

In Section~\ref{weighthomology} we present our notation and conventions, and then the definition of our weight homology of motives functor (Def.~\ref{def.weighthomology}). We also discuss the relationship between some ``birational'' and ``weight'' properties homological functors might satisfy in Lemma~\ref{HomFunctorProperties}. Our weight homology functor satisfies all of them (Theorem~\ref{thm.N2}).

In Section~\ref{wthomologysch} we specialise to the case of weight homology of motives of schemes and motives of schemes with compact support. It is in this section that we show that our theory recovers the Gillet-Soul{\'e} weight homology over a perfect field (Thm.~\ref{mainthm2}), and Geisser's Kato-Suslin homology over a finite field if the coefficient group is finite torsion prime to the characteristic of the base field (Thm.~\ref{mainthm1}).

In Section~\ref{sec:homology} we present our main comparison theorem between motivic and {\'e}tale motivic homology (Theorem~\ref{mainthm.etreal}) and give some consequences. The theorem is proved in the subsequent sections. Section~\ref{etcyclemap} is the comparison in the case of motives of smooth schemes with compact support (Theorem~\ref{etcycl-propersmooth}).

In Section~\ref{effmotive} we present the notions of ``co-{\'e}tale'' and ``reduced'' motives and use them to prove the general case of Theorem~\ref{mainthm.etreal} (see Theorem~\ref{thm.wHTheta}).

%In Section~\ref{galoisdescent} we discuss Galois descent properties which arise as consequences of the previous material.

In Appendix~\ref{etalerealisationnonsmooth} we identify that the {\'e}tale realisation of motives of singular schemes (and motives with compact support of singular schemes) are what one expects them to be.

\subsection*{Acknowledgements}

We are indebted to  B. Kahn for some ideas used in this paper.
In particular, the idea of using the reduced part of a motive to study the cycle map over a finite field is due to him. We thank T. Geisser for helping us to understand the relationship between our work and his. We also thank M. Bondarko for helpful comments.

\subsection*{Warning}

We almost always work under the hypothesis that either the base field satisfies resolution of singularities, or alternatively, that the coefficients ($A$, $\Lambda$, $R$, $\nz$, or $\Gamma$ depending on the statement) are $\zpi$-linear. We will  remind the reader from time to time.

%%%%%%%%%%%%%%%%%%%%%%%%%%%%%%%%%
%%% \input{mhf-wthommot}
%%%%%%%%%%%%%%%%%%%%%%%%%%%%%%%%%

% !TEX root = mhf-main.tex

\section{Weight homology of motives} \label{weighthomology}
\def\cB{{\mathcal B}}
\def\WcHG#1#2{H^W_{#2,c}(#1,\Gamma)}

\subsection{Notation and conventions}\label{Notationconventions}

Through-out, $k$ will be a perfect field of exponential characteristic $p$ and $n$ a positive integer prime to $p$. We will remind the reader occasionally. For a ring $R$, we write $\DMeffkR$ for the tensor triangulated category of (unbounded) effective motives with $R$-coefficients constructed by Voevodsky in \cite{V1}. We will use $\Sm / k$ (resp. $\Sch / k$) to denote the category of smooth (resp. separated finite type) $k$-schemes.
%denote Voevodsky's large triangulated category of effective motives with $R$-coefficients. That is, one starts with the (unbounded) derived category of Nisnevich sheaves of $R$-modules with transfers on the category $\Sm / k$ of smooth schemes over $k$, and then $\DMeffkR$ is its localisation with respect to the images of the morphisms $\mathbb{A}^1_X \to X$ for all $X$ smooth over $k$.
If $p$ is invertible in $R$, or if we assume strong resolution of singularities the category $\DMeffkR$ is equipped with functors
\begin{align} 
M: \Sch / k \to &\DMeffkR \ ;  &X \mapsto M(X) \label{nonsmoothmotives}  \\
M^c: \Sch^{\mathrm{prop}} / k \to &\DMeffkR \ ;  &X \mapsto M^c(X) \label{motiveCompact}
\end{align}
where $\Sch^{\mathrm{prop}} / k$ is the subcategory of $\Sch / k$ with all objects but only proper morphisms \cite[Chap. 5]{Ke} (or \cite{V1} for the version without conditions on the coefficients, but which assumes resolution of singularities).
\medbreak

We now focus on the subcategory $\DMeffgmkR$ of compact objects of $\DMeffkR$. 
Let $\SCk$ be Voevodsky's category of smooth correspondances and $K^b(\SCk)$ be the homotopy category of 
bounded complexes in $\SCk$. Then there is a natural functor (cf. \cite{V1})
\begin{equation}\label{KDM}
\pi: K^b(\SCk)\to \DMeffgmkR 
\end{equation}
such that $M(X)=\pi(X[0])$ for $X\in \Sm$. 
We also consider the category of (covariant) effective Chow motives $\CHM$. One description of $\CHM$ is as the smallest idempotent complete full subcategory of $\DMeffkR$ containing the objects $M(X)$ for all $X$ smooth and projective (\cite[5.5.11(4)]{Ke} or \cite[4.2.6]{V1}). From this description there is a tautological inclusion 
\begin{equation} \label{CHMDMinclusion}
\iota: \CHM \to \DMeffgmkR.
\end{equation}

\ifShowNewSectionTwoPointTwoChanges
% COLOR BLUE START
{ \color{blue}
\fi
We also have cause to use the full subcategory of $\SCk$ whose objects are smooth projective $k$-varieties, for which we write $\SPCk$. From this we also have $\SPCkZp$, the category with the same objects as $\SPCk$ and 
\[ \hom_{\SPCkZp}(X, Y) = \hom_{\SPCk}(X, Y) \otimes \zpi. \]

\subsection{Differential graded motives and weight homology} \label{weighStructures}

%One of the major accomplishments of Bondarko's theory of \emph{weight structures} is the following which extends Gillet-Soul{\'e}'s weight complex of a variety to a weight complex for any motive (cf. \cite[Prop. 6.6.2]{Bo2}).

One of the important tools in \cite{Bo2} is the following theorem. We refer the reader to Appendix~\ref{dgcategoriesSection} for some recollections about dg-categories, notation, and conventions.

\begin{theo}[Bondarko {\cite[\S 2.4, \S 3.1]{Bo2}}] \label{BondgEnhancement}
There exists a negative dg-category $J$ such that $J^0 = \SPCkZp$, and a commutative diagram
\[ \xymatrix{
H^0(J) \ar[r] \ar[d]_{\cong} & \Tr(J) \ar[d]^{\cong} \\
\CHMp \ar[r] &  \DMeffgmkZp
} \]
where the horizontal functors are the canonical ones, the right equivalence is a triangulated functor, and the left one is induced by the canonical functor $\SPCkZp \to \CHMp$.
\end{theo}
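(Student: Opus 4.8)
This is Bondarko's theorem, and the plan is to reconstruct it from three inputs: a dg-enhancement of $\DMeffgmkZp$, the computation of Hom-complexes between motives of smooth projective varieties, and the fact that these motives generate $\DMeffgmkZp$ as a thick triangulated subcategory. Concretely, I would fix a dg-enhancement $\mathcal D$ of $\DMeffkZp$ --- e.g.\ fibrant--cofibrant complexes of Nisnevich sheaves with transfers for the $(\bA^1,\mathrm{Nis})$-local model structure, or the Cisinski--D{\'e}glise enhancement --- pass to a small full dg-subcategory $\mathcal D^\gm$ with $H^0(\mathcal D^\gm)\simeq\DMeffgmkZp$, and let $J\subseteq\mathcal D^\gm$ be the full dg-subcategory on (cofibrant models of) the objects $M(X)$ with $X$ smooth projective over $k$. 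Choosing the models appropriately one can arrange $J^0=\SPCkZp$; this, together with the tautological inclusion $J\hookrightarrow\mathcal D^\gm$, produces the horizontal functors of the square.

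Next one checks that $J$ is negative. For $X,Y$ smooth projective of pure dimensions $d,e$ we have $H^n(J(X,Y))\cong\hom_{\DMeffgmkZp}(M(X),M(Y)[n])$; since $M(Y)$ becomes dualizable in $DM_{\gm}(k,\zpi)$ with $M(Y)^\vee\cong M(Y)(-e)[-2e]$ (and $\DMeffgmkZp\hookrightarrow DM_{\gm}(k,\zpi)$ is fully faithful by the cancellation theorem with $\zpi$-coefficients), this group is identified with $H^{2e+n}_M(X\times Y,\zpi(e))$, i.e.\ with the higher Chow group $\CH^{e}(X\times Y,-n)_{\zpi}$. This vanishes for $n>0$ because Bloch's cycle complex sits in non-negative degrees, so $J$ is negative; and for $n=0$ it is $\CH_{\dim X}(X\times Y)_{\zpi}$, the group of degree-zero $\zpi$-correspondences, with the evident composition. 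Hence $H^0(J)$ is the $\zpi$-linear covariant effective category of Chow correspondences, and the functor $\SPCkZp=J^0\to\CHMp$ of the excerpt induces a fully faithful $H^0(J)\to\CHMp$ which is essentially surjective --- hence an equivalence --- after idempotent completion, in accordance with the description of $\CHMp$ recalled before the theorem. (So the left vertical arrow is to be read as this Karoubi-dense embedding, or one works with the dg-idempotent completion of $J$ throughout; I suppress this point below.)

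It remains to identify $\Tr(J)$ with $\DMeffgmkZp$. By the universal property of $\Tr$ (Appendix~\ref{dgcategoriesSection}) the inclusion $J\hookrightarrow\mathcal D^\gm$ induces a triangulated functor $\Tr(J)\to\Tr(\mathcal D^\gm)\simeq\DMeffgmkZp$, giving the right vertical arrow and making the square commute (both composites $H^0(J)\to\DMeffgmkZp$ send $X$ to $M(X)$ and a correspondence to its class in $DM$). This functor is fully faithful on the generating objects $M(X)$ by the Hom-complex computation above, and a routine d{\'e}vissage --- iterating the five lemma along the long exact $\hom$-sequences of distinguished triangles, using that $\Tr(J)$ is generated by the $M(X)$ under shifts, cones and (after idempotent completion) summands --- upgrades this to full faithfulness on all of $\Tr(J)$. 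Essential surjectivity is exactly the assertion that motives of smooth projective $k$-varieties generate $\DMeffgmkZp$ as a thick triangulated subcategory.

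The one genuinely non-formal ingredient is this last generation statement: with integral coefficients it fails in positive characteristic, and establishing it requires Gabber's refinement of de Jong's alterations together with inversion of the exponential characteristic --- which is precisely why the theorem is stated over $\zpi$ --- and is the result (due to Bondarko \cite{Bo2}, see also \cite{Ke}) being black-boxed here. Everything else is formal (the dg-enhancement, the universal property of $\Tr$, the d{\'e}vissage) or a standard cycle-complex computation (negativity and the $H^0$-identification); the only real bookkeeping is arranging $J^0=\SPCkZp$ on the nose and handling idempotent completions so that $H^0(J)\cong\CHMp$ literally.
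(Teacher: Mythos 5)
First, a point of reference: the paper does not prove Theorem~\ref{BondgEnhancement} at all --- it is quoted from Bondarko \cite[\S 2.4, \S 3.1]{Bo2} and used as a black box --- so your reconstruction has to be measured against Bondarko's proof rather than anything in this text. Your overall skeleton is the right one, and you correctly isolate the one deep input (generation of $\DMeffgmkZp$ by motives of smooth projective varieties, which with $\zpi$-coefficients in positive characteristic rests on Gabber's refinement of de Jong); the computation $H^n(\hom_J(X,Y))\cong \CH^{e}(X\times Y,-n)_{\zpi}$ and the remarks about Karoubi completion on both vertical arrows are also correct.

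The genuine gap is in the construction of $J$ itself. The theorem imposes two \emph{strict} conditions, not conditions up to quasi-equivalence: $J$ must be negative in the sense of Appendix~\ref{dgcategoriesSection}, i.e.\ the Hom-\emph{complexes} (not merely their cohomology) must be concentrated in degrees $\le 0$; and $J^0$ must equal $\SPCkZp$ on the nose. Your argument only shows that the Hom-complexes of a full dg-subcategory of a generic enhancement have vanishing cohomology in positive degrees; the complexes themselves (maps between fibrant--cofibrant resolutions of sheaves) are unbounded, and their degree-zero cycles are nothing like finite correspondences. Applying $\tau_{\le 0}$ to the Hom-complexes restores negativity but then $J^0$ becomes the group of closed degree-zero maps of resolutions, not $\SPCkZp$; and any zig-zag of quasi-equivalences repairing this does not preserve $J^0$. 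So ``choosing the models appropriately'' is precisely the content that cannot be deferred --- both strictness conditions are used downstream (literal negativity in the b{\^e}te truncations underlying Lemma~\ref{lemm:dgdecompose}, and $J^0=\SPCkZp$ in identifying the heart with Chow correspondences for Theorem~\ref{equivalenceHomologicalFunctors}). Bondarko's proof goes in the opposite direction: he \emph{constructs} $J$ explicitly, with $\hom_J(X,Y)^{-i}$ given by (normalized, cubical) finite correspondences $X\times\square^i\to Y$, so that negativity and $J^0=\SPCk$ hold by definition, and the work shifts entirely to proving $\Tr(J)\simeq\DMeffgmkZp$ --- which is exactly where your Hom-complex computation and the generation statement do their job. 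The architecture of your argument is salvageable, but the step you label ``bookkeeping'' is the actual construction at the heart of the cited theorem.
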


%\begin{theo}[{Bondarko \cite{Bo2}}] \label{weightComplexFunctor}
%Let $k$ be a perfect field of exponential characteristic $p$. There exists a functor 
%\[t: \DMeffgmkZp \to K^b(\CHMp)\]
%which is triangulated and such that the composition 
%\[ \CHMp \stackrel{\iota}{\to} \DMeffgmkZp \stackrel{t}{\to} K^b(\CHMp) \]
%is the canonical inclusion of $\CHMp$ into its bounded homotopy category as complexes %concentrated in degree zero. The functor $t$ is called the \emph{weight complex functor}.
%\end{theo}

%\begin{rem}
%Bondarko shows that if $\Q$ is used instead of $\zpi$, the functor $t$ is an equivalence if and only if the Beilinson-Parshin conjecture holds (\cite[\S 8.3.2]{Bo}).
%\end{rem}

A consequence of Bondarko's Theorem %\ref{weightComplexFunctor} 
\ref{BondgEnhancement} is Theorem~\ref{equivalenceHomologicalFunctors} below. We obtain this theorem using the dg-enhancement from Theorem \ref{BondgEnhancement} and ``b{\^e}te'' truncations of twisted complexes via Lemma~\ref{lemm:dgdecompose}. Alternatively, one could also obtain it using Bondarko's weight structure on $\DMeffgmkZp$ together with his general weight structure equipped triangulated category version of the theorem (\cite[Cor. 2.3.4]{Bo3}).%
\footnote{In fact, the main subject of \cite{Bo3} is the construction of new cohomology theories on $SH^{S^1}(k)$ via \cite[Cor. 2.3.4]{Bo3}.} %

\begin{defn}\label{def(N)}
Let $\cT$ be a triangulated category and $\cA$ be an abelian category.
\begin{itemize}
\item[(1)]
A \emph{homological functor} $\cH:\cT\to \cA$ is an additive functor (preserving all small sums that exist in $\cT$) such that for any distinguished triangle
$T_1\to T_2\to T_3 \rmapo + T_1[1]$,
the sequence
\[
\cdots \to \cH(T_1) \to \cH(T_2) \to \cH(T_3) \to \cH(T_1[1]) \to \cH(T_2[1]) \to \cdots
\]
is exact. 
\item[(2)]
A homological functor $\cH: \DMeffgmkR \to \cA$ is \emph{\SP-acyclic} if:
\begin{enumerate}
 \item[(SPA)] for every smooth projective $X$ we have $\cH(M(X)[i]) = 0$ for $i \neq 0$.\end{enumerate}
\end{itemize}
\end{defn}

\begin{theo} \label{equivalenceHomologicalFunctors}
Let $k$ be a perfect field of exponential characteristic $p$. Composition with the inclusion $\CHMp \stackrel{\iota}{\to} \DMeffgmkZp$ induces an equivalence of categories
\begin{equation} \label{equa:equivalences}
\iota^* : 
\left \{ \begin{array}{cc} 
\SP\textrm{-acyclic  } \\ 
\textrm{homological functors } \\ 
\DMeffgmkZp \to \cA \\
%\textrm{ which satisfy (N) }
\end{array} \right  \}
\to
\left \{ \begin{array}{cc} 
\textrm{additive functors } \\ 
\CHMp \to \cA 
\end{array} \right  \}.
\end{equation}
Furthermore, if $V^\bullet$ is a bounded complex in $\SPCk$ and $M(V^\bullet)$ its image in $\DMeffgmkZp$ then there are canonical isomorphisms
\begin{equation} \label{wssIsos}
\cH(M(V^\bullet)[n]) \cong \frac{\ker(\cH(V^n) \stackrel{}{\to} \cH(V^{n + 1}))}{\im(\cH(V^{n-1}) \stackrel{}{\to} \cH(V^n))}
\end{equation}
where $\cH$ is an object of the left category. These isomorphisms are natural in $\cH$ and in $V^\bullet \in Comp^b(\SPCk)$.
\end{theo}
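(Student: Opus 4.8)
The plan is to deduce Theorem~\ref{equivalenceHomologicalFunctors} from Bondarko's dg-enhancement (Theorem~\ref{BondgEnhancement}) together with a general statement about homological functors on the triangulated hull $\Tr(J)$ of a negative dg-category $J$, namely Lemma~\ref{lemm:dgdecompose}. Concretely, first I would transport the problem across the equivalences of Theorem~\ref{BondgEnhancement}: the category $\DMeffgmkZp$ is triangle-equivalent to $\Tr(J)$ with $J^0 = \SPCkZp$, and under this equivalence the inclusion $\iota : \CHMp \to \DMeffgmkZp$ corresponds to the inclusion $H^0(J) \to \Tr(J)$ (up to the equivalence $H^0(J) \cong \CHMp$). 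So it suffices to prove the abstract statement: for a negative dg-category $J$, composition with $H^0(J) \to \Tr(J)$ induces an equivalence between homological functors $\Tr(J) \to \cA$ that vanish on $H^0(J)[i]$ for $i \neq 0$, and additive functors $H^0(J) \to \cA$.

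The key steps for the abstract statement are as follows. An object of $\Tr(J)$ is a twisted complex, i.e.\ a bounded complex $V^\bullet$ of objects of $J^0$ equipped with a twisted differential; the ``b\^ete''/stupid truncations $\sigma_{\le n}$, $\sigma_{\ge n}$ of twisted complexes give, for each $V^\bullet$, functorial distinguished triangles relating $V^\bullet$ to its truncations, and hence (by induction on the length of the complex) exhibit $M(V^\bullet)$ as built from the shifted objects $V^i[-i]$ via a finite tower of triangles — this is exactly the content one extracts from Lemma~\ref{lemm:dgdecompose}. Negativity of $J$ (i.e.\ $H^j(\hom_J(X,Y)) = 0$ for $j > 0$, and $H^0$ computes $\hom$ in $H^0(J)$) is what guarantees that the connecting maps in these triangles, after applying an (SPA)-type homological functor $\cH$, land in the right degree and that the resulting spectral-sequence/filtration degenerates. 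Applying an $\SP$-acyclic homological functor $\cH$ to this tower and chasing the long exact sequences then yields the formula \eqref{wssIsos}: $\cH(M(V^\bullet)[n])$ is the cohomology at spot $n$ of the complex $i \mapsto \cH(V^i)$, where the differentials are $\cH$ applied to the degree-preserving components of the twisted differential (these agree with the differentials of the weight complex / the images in $H^0(J) = \CHMp$). Naturality in $\cH$ and in $V^\bullet \in Comp^b(\SPCk)$ is then formal from the construction.

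Given \eqref{wssIsos}, the equivalence $\iota^*$ follows: essential surjectivity, because any additive $\Gamma : \CHMp \to \cA$ can be extended by declaring $\cH(M(V^\bullet)[n])$ to be the right-hand side of \eqref{wssIsos} (one must check this is well-defined on $\Tr(J)$, additive, and homological — again Lemma~\ref{lemm:dgdecompose}, applied to morphisms and cones of morphisms of twisted complexes, does this); fully faithfulness, because a natural transformation of functors on $\CHMp$ induces one on the left-hand side via \eqref{wssIsos} and this is inverse to $\iota^*$, while $\iota^*$ is faithful since every object of $\Tr(J)$ is (the image of) a twisted complex and \eqref{wssIsos} expresses the values of $\cH$ in terms of $\iota^*\cH$. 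The main obstacle I expect is the bookkeeping in Lemma~\ref{lemm:dgdecompose}: setting up the stupid truncation of twisted complexes so that it is functorial, produces genuine distinguished triangles in $\Tr(J)$, is compatible with cones of morphisms, and — using negativity — forces the higher connecting maps to vanish so that the iterated triangles assemble into the single clean formula \eqref{wssIsos} rather than just a finite filtration with possibly nonzero extensions. Everything downstream of that lemma is a routine diagram chase.
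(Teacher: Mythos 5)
Your proposal is correct and follows essentially the same route as the paper: the authors also deduce the theorem immediately from Bondarko's dg-enhancement (Theorem~\ref{BondgEnhancement}) combined with a general lemma on $J$-acyclic homological functors on $\Tr(J)$ for a negative dg-category $J$ (Lemma~\ref{lemm:dgdecompose}), proved exactly via the b{\^e}te truncations $P_{\geq n}$, $P_{\leq n}$ of twisted complexes and the resulting distinguished triangles. The bookkeeping you flag as the main obstacle is indeed where the paper spends its effort (Appendix~\ref{dgcategoriesSection}), but your outline matches it step for step.
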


\begin{proof}
This is an immediate consequence of Theorem~\ref{BondgEnhancement} and Lemma~\ref{lemm:dgdecompose}.
\end{proof}

\begin{lem} \label{lemm:dgdecompose}
Suppose that $J$ is a negative differential graded category and that $\cH: \Tr(J) \to \cA$ is a $J$-acyclic homological functor. That is, $\cH$ is a homological functor such that
\begin{enumerate}
 \item[($J$-A)] for $P^0 \in J$ and $n \neq 0$ we have $\cH(P^0[n]) = 0$.
\end{enumerate}
Then for every twisted complex $P = \{(P^i), q_{ij}\}$ there are canonical isomorphisms
\[ \cH(P) \cong \frac{\ker(\cH(P^0) \stackrel{\cH(q_{01})}{\to} \cH(P^1))}{\im(\cH(P^{-1}) \stackrel{\cH(q_{-1,0})}{\to} \cH(P^0))} \]
natural in $P \in \Tr(J)$, and $\cH$. Consequently, composition with the canonical functor $H^0(J) \to \Tr(J)$ induces an equivalence from the category of $J$-acyclic homological functors $\cH: \Tr(J) \to \cA$ to the category of additive functors $H^0(J) \to \cA$.
\end{lem}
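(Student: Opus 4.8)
The plan is to construct, for each twisted complex $P = \{(P^i), q_{ij}\}$, a canonical ``b\^ete'' (stupid) filtration by twisted subcomplexes $\sigma_{\geq m} P$, where $\sigma_{\geq m} P$ keeps the objects $P^i$ for $i \geq m$ (with the same $q_{ij}$ among them) and discards the rest. The key point is that since $J$ is a negative dg-category, the $q_{ij}$ with $i < j$ land in $J^{j - i}$ which lives in strictly negative degrees, so the differential of the twisted complex only ever goes ``forward'' and these $\sigma_{\geq m} P$ genuinely are twisted subcomplexes, fitting into distinguished triangles in $\Tr(J)$ of the form
\[ \sigma_{\geq m+1} P \to \sigma_{\geq m} P \to P^m[-m] \to \sigma_{\geq m+1}P[1]. \]
First I would make this construction precise and check functoriality in $P$ (a morphism of twisted complexes respects the filtration up to the appropriate shift; one has to be slightly careful because morphisms in $\Tr(J)$ are themselves ``twisted'', but the components $f_{ij}$ again vanish for $i \gg j$ by negativity, so the filtration is preserved). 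Then I would apply $\cH$ to the above triangles and use the long exact sequence together with hypothesis ($J$-A), which forces $\cH(P^m[-m]) = 0$ unless $m = 0$, and $\cH(P^0[0]) = \cH(P^0)$.

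Running the long exact sequences: for $m \neq 0, -1$ the map $\cH(\sigma_{\geq m+1}P) \to \cH(\sigma_{\geq m}P)$ is an isomorphism; for $m = 0$ one gets $\cH(\sigma_{\geq 1}P) \hookrightarrow \cH(\sigma_{\geq 0}P) \to \cH(P^0) \to \cH(\sigma_{\geq 1}P[1])$ where the last map is $\cH$ applied to (a shift of) $q_{01}$ read off the triangle for $m=1$; and for $m = -1$ one gets the piece involving $\cH(P^{-1})$ mapping in via $q_{-1,0}$. Since $P$ is a \emph{bounded} twisted complex (finitely many nonzero $P^i$), $\sigma_{\geq m}P$ is $0$ for $m \gg 0$ and equals $P$ for $m \ll 0$, so only finitely many steps are nontrivial and one assembles $\cH(P)$ from these pieces. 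Chasing the two relevant steps gives exactly
\[ \cH(P) \cong \frac{\ker\bigl(\cH(P^0) \xrightarrow{\cH(q_{01})} \cH(P^1)\bigr)}{\im\bigl(\cH(P^{-1}) \xrightarrow{\cH(q_{-1,0})} \cH(P^0)\bigr)}, \]
and naturality in $P$ and in $\cH$ follows from the naturality of the filtration and of the long exact sequence of a homological functor.

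For the final ``Consequently'' clause: restriction along $H^0(J) \to \Tr(J)$ sends a $J$-acyclic homological functor to an additive functor $H^0(J) \to \cA$ (additivity is clear; there is nothing triangulated to preserve on $H^0(J)$). For essential surjectivity and fullness/faithfulness I would argue that the formula above shows the restricted functor determines $\cH$ on all of $\Tr(J)$ up to canonical isomorphism (every object of $\Tr(J)$ is a twisted complex on objects of $J^0 = H^0(J)$, and morphisms likewise decompose), so $\iota^*$ is faithful and full; and conversely, given any additive $G \colon H^0(J) \to \cA$ one can \emph{define} $\cH$ on twisted complexes by the right-hand side of the displayed formula (equivalently, as the $0$-th homology of the complex $G(P^\bullet)$ in $\cA$ obtained by applying $G$ levelwise and using the $q_{i,i+1}$ as differentials — one checks $q$-composition relations force $d^2 = 0$ after applying $G$, using again that the correction terms $q_{ij}$ for $j > i+1$ are killed), verify it is homological, $J$-acyclic, and restricts to $G$. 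The main obstacle I anticipate is the bookkeeping in this last step: showing that the levelwise-applied complex $G(P^\bullet)$ has square-zero differential and that the assignment $P \mapsto H^0(G(P^\bullet))$ is well-defined on \emph{morphisms} of $\Tr(J)$ (not just objects) and sends the cones of $\Tr(J)$ to the correct long exact sequences — i.e. genuinely reconstructing a homological functor from the truncation data, rather than merely matching values. Negativity of $J$ is what makes all the potentially obstructing higher $q_{ij}$ and $f_{ij}$ components either vanish or be irrelevant after applying $G$, so the calculation, while fiddly, should go through cleanly.
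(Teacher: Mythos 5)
Your plan is correct and is essentially the paper's own argument: the b\^ete truncations $\sigma_{\geq m}P$ are exactly the $P_{\geq n}$ of Appendix~\ref{dgcategoriesSection}, and the proof there runs the same ($J$-A)-plus-long-exact-sequence chase through the truncation triangles, establishing naturality via the natural inclusion/projection maps (and, for the ``consequently'' clause, reconstructing $\cH$ from $G$ as $H^0\circ G$ applied levelwise via the functor $\Tr(J)\to K^b(H^0J)$, just as you propose). The only cosmetic difference is that you descend entirely through the $\sigma_{\geq m}$ filtration, whereas the paper mixes the $P_{\geq n}$ and $P_{\leq n}$ triangles; the content is the same.
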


In the proof of Lemma~\ref{lemm:dgdecompose} we will freely use definitions and notation described in Section~\ref{dgcategoriesSection}. In particular, we will make use of the following canonical distinguished triangles, reproduced here for ease of reference.

 \begin{align} 
P_{\geq n}
\  \stackrel{\iota}{\to}\ 
P
\ \stackrel{\pi}{\to}\ &
P_{\leq n - 1}
\ \stackrel{\delta}{\to}\ 
P_{\geq n}[1] 
\tag{\ref{equa:decompTriTruncate}} \\
P_{\geq m}
\  \stackrel{\pi'}{\to} \ 
P^m[-m]
\ \stackrel{\delta'}{\to}\ &
P_{\geq m + 1}[1]
\ \stackrel{\iota'}{\to}\ 
P_{\geq m}[1] 
\tag{\ref{equa:geqTri}} \\
P_{\leq m}
\ \stackrel{\delta''}{\to}\ 
P^{m + 1}[-m]
\ \stackrel{\iota''}{\to}\ &
P_{\leq m + 1}[1]
\ \stackrel{\pi''}{\to}\ 
P_{\leq m}[1] 
\tag{\ref{equa:leqTri}}
\end{align}

\begin{proof}
We claim that in the diagram
 \begin{equation} \label{equa:asdf}
\xymatrix{
\cH(P^{ - 1}) \ar[r]^{\cH(\delta')} & \cH(P_{\geq 0 }) \ar[d]_{\cH(\pi')} \ar[r]^{\cH(\iota)} & \cH(P) \ar[r] & 0 \\% \ar@{}[d]|\vcong \\%\ar[r]^-{\cH(\pi)} &  \cH(\trlq{i - 1}C) \ar@{}[d]|\vcong \\ 
 & \ker \left (\cH P^0 \stackrel{\cH(\pi'\delta')}{\to} \cH P^{1} \right ), & %\frac{ker(\cH C^i \to \cH C^{i + 1})}{im(\cH C^{i - 1} \to \cH C^{i})} & %0 &&&
}
\end{equation}
the row is exact and the morphism $\cH(\pi')$ is an isomorphism, and consequently, the three morphisms in the diagram induce a canonical isomorphism
\[ \frac{\ker(\cH P^0 \to \cH P^{1})}{\im(\cH P^{- 1})}  
\underset{\cH(\pi')}{\stackrel{\sim}{\leftarrow}} 
\frac{\cH(P_{\geq 0})}{\im( \cH P^{ - 1}) } 
\underset{\cH(\iota)}{\stackrel{\sim}{\to}}
\cH(P). \]
These isomorphisms are natural due to the naturality described in Equation~\eqref{equa:functoriotapi}.

To prove this claim, we begin by noticing that using the triangles from Equation~\eqref{equa:geqTri} and \eqref{equa:leqTri} and induction on the length of the twisted complex, one sees that
\begin{equation} \label{equa:acyVandg}
\cH(P_{\leq n}[k]) = 0 \textrm{ for } k > n, \qquad \textrm{ and } \qquad \cH(P_{\geq n}[k]) = 0 \textrm{ for } k < n. 
\end{equation}
By the vanishing \eqref{equa:acyVandg} and the triangles \eqref{equa:decompTriTruncate} and \eqref{equa:leqTri} there are two exact sequences
\begin{align}
&\cH(P_{\leq - 1}[ {-} 1]) \stackrel{\cH(\delta)}{\to} \cH(P_{\geq 0}) \stackrel{\cH(\iota)}{\to} \cH(P) \to 0  \nonumber \\   
\cH(P^{ - 1}) \stackrel{\cH(\iota'')}{\to} &\cH(P_{\leq  - 1}[ - 1]) \to 0. \nonumber
\end{align}
which shows that the row of \eqref{equa:asdf} is exact since $\delta' = \delta \iota''$. %
%\[ \frac{\cH(P_{\geq i}[i])}{\im(\cH(P^{i - 1}) \stackrel{\cH(\delta\iota'')}{\to} \cH(P_{\geq i}[i]))} \underset{\cong}{\stackrel{\cH(\iota)}{\to}} \cH(P[i]). \]
%
Then by the vanishing \eqref{equa:acyVandg}, and the triangle \eqref{equa:geqTri}, we get the two exact sequences
\begin{align}
0 \to \cH(P_{\geq 0}) \stackrel{\cH(\pi')}{\to} \cH P^0 \stackrel{\cH(\delta')}{\to} &\cH(P_{\geq 1}[1])  \nonumber \\
0 \to &\cH(P_{\geq 1}[1]) \stackrel{\cH(\pi')}{\to} \cH P^{1} \nonumber 
\end{align}
and hence $\cH(P_{\geq 0})$ is equal to the kernel of $\cH P^0 \stackrel{\cH(\delta')}{\to}\stackrel{\cH(\pi')}{\to} \cH P^{1}$. That is, the morphism $\cH(\pi')$ of \eqref{equa:asdf} is an isomorphism.
%\[ \cH(\trgq{i}C[i]) \stackrel{\cH(\pi')}{\to} ker \biggl (\cH C^i \stackrel{\cH(\delta')}{\to} \stackrel{\cH(\pi')}{\to} \cH C^{i + 1} \biggr ) \]
\end{proof}

\ifShowNewSectionTwoPointTwoChanges
} % COLOR BLUE END
\fi

\begin{defn}\label{def.weighthomology}
For an additive functor $\Gamma: \CHMp \to \cA$, let 
\[\wH - \Gamma: \DMeffgmkZp \to \cA\]
be the \SP-acyclic (Def.~\ref{def(N)}) homological functor associated to $\Gamma$ by Theorem \ref{equivalenceHomologicalFunctors}.
We put
\[
\wHH i - \Gamma = \wH {-[i]} \Gamma \qfor i\in \bZ.
\]

\ifShowNewSectionTwoPointTwoChanges
% BEGIN COLOR BLUE
{\color{blue} 
\fi

\begin{rem}
An inverse to Equation~\eqref{equa:equivalences} is given by $\Gamma \mapsto (H^0\Gamma) \circ t$ where \mbox{$H^i\Gamma: K^b(\CHMp) \to \cA$} is the $i$-th cohomology of $\Gamma$ applied term-wise to complexes in $K^b(\CHMp)$ and $t$ is Bondarko's weight complex functor (Equation~\eqref{equa:weightComplex}). So, with this notation, we have
\begin{equation} \label{eq.wthomc.alt}
\wHH i - \Gamma= (H^i\Gamma) \circ t.
\end{equation}
\end{rem}

\ifShowNewSectionTwoPointTwoChanges
} % END COLOR BLUE
\fi

For an abelian group $A$, we put
\[\wHH i - A =\wHH i - {\Gamma_A},\]
where $\Gamma_A:\CHMp \to \AB$ is the additive functor such that $\Gamma_A(X)=A^{c(X)}$ for $X$ smooth projective,
where $c(X)$ is the set of connected components of $X$.
\end{defn}

%\begin{rem}
%Bondarko's weight complex functor from Theorem~\ref{weightComplexFunctor} a priori depends on a choice of dg-enhancement for $\DMeffgmkZp$, so a priori Definition~\ref{def.weighthomology} also depends on this choice of dg-enhancement. However, we get the same weight homology if instead we use his weak weight complex functor which does not depend on such a choice, and so a postiori our weight homology is also independent of a choice of dg-enhancement. We have done this because we wanted to avoid introducing its target, $K_{\mathfrak{w}}^b(\CHMp)$, which is a non-triangulated quotient of $K^b(\CHMp)$. See \cite[\S 3.1]{Bo} for details.
%\end{rem}

\subsection{Additional properties of homological functors}

We will observe a wide class of homological functors on $\DMeffgmkR$ satisfies stronger conditions than $\SP$-acyclicity (Def.~\ref{def(N)}) (see Theorem \ref{thm.N2} below):

\begin{lem} \label{HomFunctorProperties}
Suppose either resolution of singularities holds, or $R$ is a $\zpi$-algebra. Let $\cH: \DMeffgmkR \to \cA$ be a homological functor. The following three properties are equivalent.%
\footnote{(PI) $= $ $\bP^1$-Invariance, (TL) $=$ Tate Local, (BI) $=$ Birational Invariance.}
\begin{itemize}
 \item[(PI)] The canonical morphism $\cH( M(\bP^1_X)[i] ) \to \cH( M(X)[i] )$ is an isomorphism for all smooth projective $X$, and all $i \in \bZ$.
 \item[(TL)] We have $\cH(M(1)[i]) = 0$ for all $M \in \DMeffgmkR$ and $i \in \bZ$.
 \item[(BI)] for any dense open immersion $U \to X$ of smooth schemes the induced morphism $\cH( M(U)[i] ) \to \cH( M(X)[i] )$ is an isomorphism for all $i \in \bZ$.
\end{itemize}
Moreover, a $\SP$-acyclic homological functor $\cH$ satisfies the above equivalent properties if and only if it is \emph{$\Sm$-acyclic}, i.e., if and only if:
\begin{itemize}
 \item[($\Sm$A)] For every smooth (not necessarily projective) $X$ we have $\cH(M(X)[i]) = 0$ for $i \neq 0$.
\end{itemize}
\end{lem}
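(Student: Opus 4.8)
The plan is to prove the cycle of implications $(\mathrm{PI}) \Rightarrow (\mathrm{TL}) \Rightarrow (\mathrm{BI}) \Rightarrow (\mathrm{PI})$, and then handle the final equivalence separately. For $(\mathrm{PI}) \Rightarrow (\mathrm{TL})$: recall that the motive $M(\bP^1_X)$ decomposes as $M(X) \oplus M(X)(1)[2]$ in $\DMeffgmkR$, compatibly with the structure morphism $\bP^1_X \to X$, so the hypothesis that $\cH(M(\bP^1_X)[i]) \to \cH(M(X)[i])$ is an isomorphism for smooth projective $X$ says exactly that $\cH(M(X)(1)[i]) = 0$ for such $X$ and all $i$. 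Since $\DMeffgmkR$ is generated as a thick triangulated category by the $M(X)$ with $X$ smooth projective (here I would invoke the known fact, available under resolution of singularities or $\zpi$-coefficients, that $\DMeffgmkR$ is generated by motives of smooth projective schemes — e.g. via the weight structure of Theorem~\ref{BondgEnhancement}, or via de Jong/Gabber alterations), and since $(-)(1)$ is a triangulated endofunctor, an induction over the triangles building an arbitrary $M \in \DMeffgmkR$ from smooth projective generators gives $\cH(M(1)[i]) = 0$ for all $M$ and $i$; this is $(\mathrm{TL})$.

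For $(\mathrm{TL}) \Rightarrow (\mathrm{BI})$: given a dense open immersion $U \hookrightarrow X$ of smooth schemes with closed complement $Z$ (of pure codimension $c \geq 1$, after stratifying by the smooth locus and using induction, or simply using the Gysin triangle in the form available in $\DMeffgmkR$), the Gysin distinguished triangle reads $M(U) \to M(X) \to M(Z)(c)[2c] \to M(U)[1]$, where $M(Z)(c)[2c]$ is a twisted (possibly shifted sum of twisted) motive. Applying $\cH(-[i])$ and using $(\mathrm{TL})$ to kill the twisted term shows $\cH(M(U)[i]) \to \cH(M(X)[i])$ is an isomorphism. (If $Z$ is not smooth one resolves it or stratifies and induces on dimension; this is where the resolution-of-singularities-or-$\zpi$ hypothesis is used again.) For $(\mathrm{BI}) \Rightarrow (\mathrm{PI})$: take $X$ smooth projective and consider the open immersion $\bA^1_X \hookrightarrow \bP^1_X$. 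By $(\mathrm{BI})$, $\cH(M(\bA^1_X)[i]) \to \cH(M(\bP^1_X)[i])$ is an isomorphism; by homotopy invariance of motives $M(\bA^1_X) \cong M(X)$, and one checks this chain of maps is compatible with the projection $\bP^1_X \to X$. Hence $\cH(M(\bP^1_X)[i]) \to \cH(M(X)[i])$ is an isomorphism, which is $(\mathrm{PI})$.

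For the final assertion, suppose $\cH$ is $\SP$-acyclic. If $\cH$ is $\Sm$-acyclic then in particular it kills $M(X)[i]$, $i \neq 0$, for all smooth $X$; applying this to $\bP^1_X$ and $X$ (both smooth, the comparison map respecting the splitting) forces $(\mathrm{PI})$, so the equivalent properties hold. Conversely, suppose $\cH$ satisfies $(\mathrm{BI})$; I want $\cH(M(X)[i]) = 0$ for $X$ smooth and $i \neq 0$. Choose a smooth compactification: here one uses that, working with $\zpi$-coefficients or under resolution of singularities, any smooth $X$ admits a dense open immersion $X \hookrightarrow \bar X$ into a smooth projective $\bar X$ (Hironaka, or Gabber's refined alterations plus a transfer argument to invert $p$). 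By $(\mathrm{BI})$, $\cH(M(X)[i]) \cong \cH(M(\bar X)[i])$, and the latter vanishes for $i \neq 0$ by $\SP$-acyclicity.

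The main obstacle I expect is not any single implication — each is a short diagram chase once the right distinguished triangle is in hand — but rather the careful bookkeeping of the geometric inputs that make those triangles available: the Gysin triangle for possibly-singular closed complements, the generation of $\DMeffgmkR$ by smooth projective motives, and the existence of smooth projective compactifications, all of which require the standing hypothesis (resolution of singularities, or $\zpi$-coefficients so that one may use alterations and invert the degrees). Keeping the compatibility with structure maps explicit enough that $(\mathrm{BI}) \Rightarrow (\mathrm{PI})$ and the $\Sm$-acyclicity equivalence genuinely go through — rather than just the abstract isomorphism statements — is the part that needs the most care.
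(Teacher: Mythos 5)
Your cycle of implications (PI) $\Rightarrow$ (TL) $\Rightarrow$ (BI) $\Rightarrow$ (PI) is sound and close to the paper's. For (TL) $\Rightarrow$ (BI) the paper avoids your stratification/resolution of the closed complement by invoking the generalised Gysin triangle $M(U) \to M(X) \to M^c(Z)^*(d)[2d] \to M(U)[1]$, which is valid for an arbitrary closed complement $Z$; your d\'evissage by induction on $\dim Z$ also works. For (BI) $\Rightarrow$ (PI) your two-out-of-three argument with $\bA^1_X \hookrightarrow \bP^1_X$ and homotopy invariance is correct and in fact more elementary than the paper, which cites Colliot-Th\'el\`ene's geometric argument from Kahn--Sujatha; that argument is needed when birational invariance is only assumed among projective varieties, whereas (BI) here applies to all smooth schemes, so your shortcut is legitimate.

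The genuine gap is in the direction ``$\Sm$-acyclic $\Rightarrow$ (PI)''. You assert that applying ($\Sm$A) to $\bP^1_X$ and $X$ ``forces (PI)'', but ($\Sm$A) only tells you that $\cH(M(\bP^1_X)[i])$ and $\cH(M(X)[i])$ both vanish for $i \neq 0$; it says nothing about the map in degree $i = 0$. Via the splitting $M(\bP^1_X) \cong M(X) \oplus M(X)(1)[2]$, what you actually need is $\cH(M(X)(1)[2]) = 0$, and the vanishing of $\cH(M(\bP^1_X)[i])$ for $i \neq 0$ only yields $\cH(M(X)(1)[j]) = 0$ for $j \neq 2$ --- precisely missing the one degree required. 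The paper closes this by also applying ($\Sm$A) to the smooth scheme $\bA^1_X - X = \bG_{m,X}$, whose motive splits as $M(X) \oplus M(X)(1)[1]$; this gives $\cH(M(X)(1)[j]) = 0$ for $j \neq 1$, and the two constraints together kill $\cH(M(X)(1)[j])$ for all $j$, whence (TL) by the same generation argument you use elsewhere, and hence (PI). Without some such second geometric input your argument does not establish the $i = 0$ case. Your converse direction ($\SP$-acyclic plus (BI) implies ($\Sm$A)) matches the paper: smooth compactifications under resolution of singularities, or Gabber's alterations plus the trace argument with $\zpi$-coefficients.
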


\begin{rem}
The equivalence (BI) $\Leftrightarrow$ (TL) is essentially shown in \cite[Prop. 5.2(b)]{KaSu2} using the same techniques, but we include the proof for the convenience of the reader.
\end{rem}
 
\begin{proof}
(PI) $\Rightarrow$ (TL). Since motives of smooth projective varieties generate $\DMeffgmkR$ as a triangulated category it follows that \mbox{$\cH( M(\bP^1) \otimes M' ) \to \cH( M' )$} is an isomorphism for all objects $M' \in \DMeffgmkR$. Now the isomorphism \mbox{$M(\bP^1) = \bZ \oplus \bZ(1)[2]$} implies the claim.

(TL) $\Rightarrow$ (BI). This follows from the generalised Gysin distinguished triangle (for $U \to X$ a dense open immersion of smooth schemes of dimension $d$ with closed complement $Z$)
\[ M(U) \to M(X) \to M^c(Z)^*(d)[2d] \to M(U)[1] \]
(\cite[Prop 5.5.5, Thm 5.5.14(3)]{Ke}, cf. \cite[Prop. 4.1.5, Thm 4.3.7(3)]{V1}).

(BI) $\Rightarrow$ (PI). This follows from a very neat geometric argument of Colliot-Th{\'e}l{\`e}ne. See \cite[Appendix A]{KaSu}.

Now suppose that $\cH$ is $\SP$-acyclic.

(BI) $\Rightarrow$ ($\Sm$A). If resolution of singularities holds, this is a direct consequence of the existence of smooth compactifications. Otherwise: By Nagata's compactification theorem and Gabber's alterations theorem \cite{Il} for every smooth scheme $U$ and every $l \neq p$ there is a roof of morphisms $U \leftarrow U' \to X'$ where $X'$ is smooth and projective, $U' \to X'$ is a dense open immersion, and $U' \to U$ is finite flat surjective of degree prime to $l$. Since $\cH$ is assumed to be $\SP$-acyclic, the property (BI) implies that ($\Sm$A) is satisfied for the scheme $U'$. But $M(-)$ is functorial in $\SCk$ so by the trace formula, $M(f)M({^tf}) = \deg f \cdot \id_{M(U)}$, for the finite flat morphism $f$, the motive $M(U)$ is a direct summand of $M(U')$, at least $\zll$-linearly. But we were free to choose $l \neq p$, and so $M(U)$ is a direct summand of $M(U')$ as long as we use $\zpi$-linear coefficients (cf. \cite[Appendix A2]{Ke}). Hence, Property (TL) for $U'$ implies Property (TL) for $U$.

($\Sm$-acyclic) $\Rightarrow$ (TL). Let $X \in \Sm / k$. Since $M(\bP^1_X) \cong M(X) \oplus M(X)(1)[2]$, the property ($\Sm$A) implies that $\cH(M(X)(1)[i]) = 0$ unless $i = 2$. But the isomorphism \mbox{$M(\bA^1_X - X) \cong M(X) \oplus M(X)(1)[1]$} with ($\Sm$A) then implies that $\cH(M(X)(1)[i]) = 0$ unless $i = 1$. That is, $\cH(M(X)(1)[i]) = 0$ for all $i$. Since objects of the for $M(X)$ for $X \in \Sm / k $ generate the triangulated category $\DMeffgmkR$ it follows that $\cH(M(1)[i]) = 0$ for all $M \in \DMeffgmkR$ and $i \in \bZ$.
\end{proof}
\medskip

\begin{theo}\label{thm.N2}
Suppose either resolution of singularities holds, or $A$ is a $\zpi$-module. The homological functor $\wH - A$ from Definition \ref{def.weighthomology} 
satisfies all properties of Lemma \ref{HomFunctorProperties}. That is, $\wH - A$ is $\Sm$-acyclic.
\end{theo}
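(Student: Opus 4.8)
The plan is to deduce the theorem from Lemma~\ref{HomFunctorProperties}, whose hypotheses are met since $\wH - A$ is defined on $\DMeffgmkZp$ and $\zpi$ is a $\zpi$-algebra. By Definition~\ref{def.weighthomology} the functor $\wH - A$ is, by construction, the $\SP$-acyclic homological functor on $\DMeffgmkZp$ associated to the additive functor $\Gamma_A$. Hence, by the final assertion of Lemma~\ref{HomFunctorProperties}, it will be $\Sm$-acyclic as soon as it satisfies one --- equivalently, all --- of the properties (PI), (TL), (BI) of that lemma, so I would verify the single most economical one, namely (PI): that the canonical map $\wHH{i}{M(\bP^1_X)}{A}\to\wHH{i}{M(X)}{A}$ induced by the projection $p\colon\bP^1_X\to X$ is an isomorphism for every smooth projective $X$ and every $i\in\bZ$.

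To carry out the verification of (PI): both $M(\bP^1_X)$ and $M(X)$ are motives of smooth projective $k$-schemes, so property (SPA) of Definition~\ref{def(N)} gives $\wHH{i}{M(\bP^1_X)}{A}=0=\wHH{i}{M(X)}{A}$ for all $i\neq 0$, which settles (PI) in those degrees. In degree $0$, the equivalence $\iota^*$ of Theorem~\ref{equivalenceHomologicalFunctors} identifies the restriction of $\wHH{0}{-}{A}$ to $\CHMp$ with $\Gamma_A$ (equivalently, one may apply formula~\eqref{wssIsos} to the length-one complex $X[0]$), so the map in question is $\Gamma_A$ applied to $M(p)\colon M(\bP^1_X)\to M(X)$. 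Since $\Gamma_A$ depends only on the set of connected components and $p$ induces a bijection on connected components, $\Gamma_A(M(p))$ is the identity of $A^{c(X)}$, hence an isomorphism. This establishes (PI), and therefore the $\Sm$-acyclicity of $\wH - A$.

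I do not expect a genuine obstacle: the substantive content has already been absorbed into the equivalences (BI)$\Leftrightarrow$(TL)$\Leftrightarrow$(PI) and (BI)$\Rightarrow$($\Sm$A) of Lemma~\ref{HomFunctorProperties}, so the theorem reduces to the short computation above. The only point demanding a little care is the behaviour of $\Gamma_A$ on morphisms --- that a morphism of smooth projective varieties induces on $\Gamma_A$ the evident pushforward on connected components --- which is immediate from the definition of $\Gamma_A$. Should one prefer to bypass even this, one can instead check (TL) directly: from the splitting $M(\bP^1_X)\cong M(X)\oplus\bigl(M(X)\otimes\bL\bigr)$ with $\bL=\bZ(1)[2]$, together with the fact that $M(p)$ is a $\Gamma_A$-isomorphism, one is forced to have $\Gamma_A(M(X)\otimes\bL)=0$; since $M(X)\otimes\bL$ is again an effective Chow motive, being a direct summand of $M(\bP^1_X)$, this yields $\wHH{i}{M(X)(1)}{A}=\wHH{i-2}{M(X)\otimes\bL}{A}=0$ for all $i$, and since the $M(X)$ with $X$ smooth projective generate $\DMeffgmkZp$ we conclude $\wHH{i}{M(1)}{A}=0$ for every $M\in\DMeffgmkZp$ and $i\in\bZ$, which is (TL).
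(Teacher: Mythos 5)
Your proposal is correct and follows essentially the same route as the paper: the paper also verifies (PI) via the observation that the projection $\bP^1_X\to X$ induces an isomorphism on $\Gamma_A$ (together with $\SP$-acyclicity, which holds by construction), and then invokes Lemma~\ref{HomFunctorProperties}. Your write-up is in fact slightly more careful than the paper's one-line argument, since you check the canonical \emph{morphism} is an isomorphism in all degrees rather than merely noting $\Gamma_A(X)=\Gamma_A(\bP^1_X)$ on objects.
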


\begin{proof}
Obviously $\Gamma_A(X)=\Gamma_A(\bP^1_X)$ for $X$ smooth projective so that $\wH - A$ satisfies (PI). 
It also satisfies (N) by definition.
Hence Theorem \ref{thm.N2} follows from Lemma \ref{HomFunctorProperties}.
\end{proof}

\begin{rem}
Saying that the functor $\wH - A$ satisfies the three equivalent properties (PI), (TL), and (BI) is the same as saying that it factors through Kahn-Sujatha's triangulated category of birational motives (\cite[\S 5]{KaSu2}).
\end{rem}

%%%%%%%%%%%%%%%%%%%%%%%%%%%%%%%%%
%%% \input{mhf-wthomschVer2}
%%%%%%%%%%%%%%%%%%%%%%%%%%%%%%%%%

% !TEX root = mhf-main.tex

\section{Weight homology of schemes}\label{wthomologysch}

As always, let $k$ be a perfect field of exponential characteristic $p$. 
In this section we fix an additive functor $\Gamma: \CHMp \to \cA$.

\subsection{Weight homology with compact support of schemes}\label{WcHSch}

In this subsection we compare our weight homology of a motive with compact supports to Gillet-Soul{\'e}'s weight homology, and we observe some consequences.

For $X\in \Sch / k$ and $i \in \Z$, we put (cf. Def.~\ref{def.weighthomology})
\begin{equation}\label{eq.wthomc}
\WcHG X i = \wH {M^c(X)[i]} \Gamma.
\end{equation}

Let $\Sch^{\mathrm{prop}} / k$ be the category of the same objects as $\Sch / k$ but only proper morphisms.
By \cite[\S2.2]{V1} (if resolution of singularities holds) or \cite{Ke}, this gives rise to covariant functors 
\[
\WcHG - i\;:\; \Sch^{\mathrm{prop}} / k  \to \cA\quad (i\in \Z)
\]
satisfying the following properties:
\begin{itemize}
\item[$({\bf Wc}1)$](Nilpotent invariance)
For $X \in \Sch / k$ with its reduced structure $X_{\red}\hookrightarrow X$, 
the map $\WcHG {X_{\red}} i\to \WcHG X i$ is an isomorphism for any $i\in \Z$.
\item[$({\bf Wc}2)$](Localization axiom)
For $X\in \Sch / k$ and a closed subscheme $Z\subset X$, one has a functorial long exact sequence
\[
\cdots\to \WcHG {Z} i \to \WcHG {X} i  \to \WcHG {X-Z} {i} \to \WcHG {Z} {i-1} \cdots
\]
\item[$({\bf Wc}3)$](Pull-back by quasi-finite flat morphisms)
For a quasi-finite flat morphism $f:Y\to X$ in $\Sch / k$, one has a canonical map
\[
f^*: \WcHG {X} i \to \WcHG {Y} i
\]
compatible with functoriality for proper morphisms in $\Sch / k$ and with localization sequences in $({\bf Wc}2)$ in an obvious sense.
\item[$({\bf Wc}4)$](Pull-back by morphisms between smooth schemes) This is a consequence of duality.
For a morphism \mbox{$f:Y\to X$} in $\Sm / k$ such that $\dim(Y)=\dim(X)$, one has a canonical map
\[
f^*: \WcHG {X} i \to \WcHG {Y} i
\]
compatible with functoriality for proper morphisms in $\Sch / k$ and with localization sequences in $({\bf Wc}2)$ in an obvious sense. 

\item[$({\bf Wc}5)$]
For $X\in \Sch / k$ projective smooth, we have
\[
\WcHG X i =\left\{ \begin{array}{lr}
\Gamma(X) & \text{ for } i=0 \\
0 & \text{ for } i\ne 0 \end{array} \right.  
\]

\item[$({\bf Wc}6)$] % from induction and localisation
If the groups $\Gamma(X)$ are finitely generated for all $X$ smooth and projective (e.g., $\Gamma = \Gamma_{\bZ / n}$) then the groups $\WcHG X i$ are finitely generated for all $X \in \Sch / k$.
\end{itemize}

\bigskip

Recall the definition of the Gillet-Soul{\'e} weight homology: For $X \in Sch / k$, let $X \to \overline{X}$ be an open immersion into a projective variety and $Z$ the closed complement. Take $\overline{\cX}_\bullet \to \overline{X}$ a proper cdh hypercover (or proper $l$dh-hypercover \cite{Il}, \cite{Ke}) such that each $\overline{\cX}_i$ is smooth over $k$ and let $\cZ_\bullet \to Z$ be a refinement of the induced hypercover of $Z$ such that each $\cZ_i$ is smooth. We then obtain a complex concentrated in positive homological degrees in the additive envelope of the category of smooth projective varieties equipped with an augmentation
\begin{equation} \label{smoothcdhReplacement}
\cV_\bullet = Cone( \cZ_\bullet \to \overline{\cX}_\bullet) \to X.
\end{equation}
By definition, the Gillet-Soul{\'e} weight complex $W(X)$ is the image of $\cV_\bullet$ in the homotopy category of complexes of Chow motives $K^b(\CHMp)$, and for an additive functor $\Gamma: \CHMp \to \AB$ one defines
\begin{equation}\label{HGS.eq}
H^{GS}_i(X,\Gamma) = H_i(\Gamma(\cV_\bullet)).
\end{equation}

%We have the following result which is an analogue of \cite[Thm.4.2]{Su} and \cite[Thm.5.6]{Ke} over a finite field.

\begin{theo}\label{mainthm2}
%Let $n>0$ be an integer invertible in a finite field $k$. Let $\Gamma_{\!\!\Lambda}$ denote the covariant functor $X \mapsto \Lambda^{c(X)}$ on $\CHM$. 
There are isomorphisms
\begin{equation} \label{GSWHcisos}
H^{GS}_{i }(X,\Gamma) \cong \WcHG X {i}
\end{equation}
natural in $X \in \Sch^{\mathrm{prop}} / k$.
\end{theo}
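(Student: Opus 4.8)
The plan is to identify the Gillet--Soul{\'e} complex $\cV_\bullet$ with (a shift/rotation of) Bondarko's weight complex $t(M^c(X))$ in $K^b(\CHMp)$, and then apply the formula \eqref{eq.wthomc.alt} expressing weight homology as the cohomology of $\Gamma$ applied to the weight complex. First I would recall from \eqref{smoothcdhReplacement} that $\cV_\bullet = Cone(\cZ_\bullet \to \ol{\cX}_\bullet)$ sits in a situation where $\ol{\cX}_\bullet \to \ol X$ is a smooth proper (l)dh-hypercover and $\cZ_\bullet \to Z$ likewise, with $X = \ol X \setminus Z$. Applying $M^c(-)$ and using that $M^c$ sends (l)dh-hypercovers of $\ol X$ (resp.\ $Z$) to resolutions, i.e.\ that $M^c(\ol X) \cong \Tot M^c(\ol{\cX}_\bullet)$ and $M^c(Z) \cong \Tot M^c(\cZ_\bullet)$ in $\DMeffgmkZp$ (this is the cdh/ldh-descent property of $M^c$, available by \cite{Ke} or \cite{V1} under the running hypotheses), together with the localization triangle $M^c(Z) \to M^c(\ol X) \to M^c(X) \to M^c(Z)[1]$, one obtains that the image of $\cV_\bullet$ in $\DMeffgmkZp$ is canonically isomorphic to $M^c(X)$. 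Since each $\ol{\cX}_i$, $\cZ_i$ is smooth projective, $\cV_\bullet$ is literally a bounded complex in (the additive envelope of) $\SPCkZp$, so Theorem~\ref{equivalenceHomologicalFunctors}, specifically the natural isomorphism \eqref{wssIsos}, applies directly.

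The key steps, in order: (i) show $M^c$ satisfies descent for the relevant hypercovers, so that $M(\cV_\bullet) \cong M^c(X)$ in $\DMeffgmkZp$ naturally in $X \in \Sch^{\mathrm{prop}}/k$ --- here the naturality in proper morphisms comes from functoriality of $M^c$ on $\Sch^{\mathrm{prop}}/k$ and the ability to choose the hypercovers compatibly (or pass to a common refinement); (ii) invoke \eqref{wssIsos} with $\cH = \wH{-}{\Gamma}$ and $V^\bullet = \cV_\bullet$ to get $\wH{M^c(X)[i]}{\Gamma} = \wH{M(\cV_\bullet)[i]}{\Gamma} \cong H_i(\Gamma(\cV_\bullet))$, where the right-hand side is exactly $H^{GS}_i(X,\Gamma)$ by \eqref{HGS.eq}; (iii) check that this identification is compatible with the covariant functoriality in proper morphisms on both sides, which reduces to the naturality clause in Theorem~\ref{equivalenceHomologicalFunctors} (``natural in $V^\bullet \in Comp^b(\SPCk)$'') once one knows a proper map $f\colon X' \to X$ can be covered by a map of the chosen hypercovers inducing the map $M^c(f)$ --- again a matter of taking common refinements of the hypercovers.

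I expect the main obstacle to be step (i) together with the functoriality bookkeeping in step (iii): one must be careful that $\cV_\bullet$ as constructed depends on choices (the compactification $\ol X$, the hypercovers $\ol{\cX}_\bullet$, $\cZ_\bullet$, and the refinement), and that $W(X)$ is only well-defined up to chain homotopy in $K^b(\CHMp)$. Matching this to the canonical object $M^c(X) \in \DMeffgmkZp$ requires verifying that the isomorphism class of $M(\cV_\bullet)$ --- and the induced map for a proper morphism --- is independent of these choices, which is precisely the content of the cdh/ldh-descent statement for $M^c$ applied over both $\ol X$ and $Z$ and the compatibility of localization triangles. None of this is conceptually difficult given \cite{Ke}, but assembling the compatible system of hypercovers for a morphism (or a composable pair, to get functoriality on the nose rather than up to homotopy) is the place where care is needed; alternatively one can phrase the whole comparison at the level of the homotopy category $K^b(\SPCkZp)$ via the functor $\pi$ of \eqref{KDM} and Bondarko's weight complex functor, using \cite[Prop. 6.6.2]{Bo2}, which already identifies $t \circ M^c$ with the Gillet--Soul{\'e} weight complex functor --- then Theorem~\ref{mainthm2} is immediate from \eqref{eq.wthomc.alt}.
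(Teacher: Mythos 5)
Your proposal is correct and follows essentially the same route as the paper: the proof there consists precisely of the isomorphism $M(\cV_\bullet) \to M^c(X)$ coming from cdh/ldh descent and the localization triangle (\cite[Thm 5.3.1]{Ke}, cf. \cite[Thm 4.1.2]{V1}) combined with the natural isomorphism \eqref{wssIsos}. The alternative you sketch via Bondarko's identification of $t \circ M^c$ with the Gillet--Soul{\'e} weight complex is also noted in the paper's accompanying remark, where it is observed to amount to the same argument.
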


\begin{rem}
%Using a method of \cite{GS} and \cite{GS2} and Gabber's refinement of de Jong's theorem \cite{Il},
%one can define a Borel-Moore homology theory $H^{GS}_i(-,R)$ on $\Sch$ (with coefficient $R$ where $p$ is invertible),
%which satisfies the axioms $({\bf Wc}0)$, $({\bf Wc}2)$ and $({\bf Wc}4)$ (see \cite{J} and \cite{KeS2}). 
%We expect that there is an isomorphism of homology theories between $\WcHG - i$ and $H^{GS}_i(-,R)$
%(by the axioms, it suffices to construct a map of homology theories from one to another).
A remarkable point is that for $H^{GS}_i(-,\Gamma)$, the properties $({\bf Wc}3)$ and $({\bf Wc}4)$ are not obvious from its construction.
\end{rem}

\ifShowNewSectionTwoPointTwoChanges
% START COLOR BLUE
{ \color{blue}
\fi

\begin{rem}
Using the description of Equation~\eqref{eq.wthomc.alt}, Theorem~\ref{mainthm2} also follows from Bondarko's result that the Gillet-Soul{\'e} weight complex functor is isomorphic to $(t \circ M^c)(-)$ (\cite[\S 6.6]{Bo2}). They key point of his comparison are the isomorphisms \mbox{$M(\cV_\bullet) {\to} M^c(X)$} coming from cdh descent and the localisation distinguished triangles of $\DMeffgmkZp$, so this amounts to essentially the same proof.
\end{rem}

\ifShowNewSectionTwoPointTwoChanges
} % END COLOR BLUE
\fi

\begin{proof}
The isomorphisms \eqref{GSWHcisos} come from Equation~\eqref{wssIsos} and the morphism \mbox{$M(\cV_\bullet) {\to} M^c(X)$} in $\DMeffgmkZp$ induced by Equation~\eqref{smoothcdhReplacement} which is an isomorphism by \cite[Thm 5.3.1]{Ke} (cf. \cite[Thm 4.1.2]{V1}).
\end{proof}

\medbreak

\begin{exam}\label{WHexam1}
Let $E\in \Sch / k$ be projective which is a simple normal crossing divisor on a smooth $P\in \Sch / k$.
Let $E_1,\dots, E_N$ be the irreducible components of $E$ and put for an integer $a\geq 0$
$$
 E^{[a]} =\underset{1\leq i_0< \cdots< i_a\leq N}{\coprod} E_{i_0,\dots,i_a}
\quad (E_{i_0,\dots,i_a} = E_{i_0}\cap\cdots\cap E_{i_a}).
$$
For $\Gamma:\CHM \to \cA$ as above, consider a homological complex in $\cA$
$$ 
\Gamma(E) : \cdots\to \Gamma(E^{[a]}) \rmapo{\partial_a} \Gamma(E^{[a-1]}) 
\rmapo{\partial_{a-1}} \cdots \rmapo{\partial_1} \Gamma(E^{[0]}), 
$$
where 
%$c(-)$ denotes the set of connected components, 
$\Gamma(E^{[a]})$ is in degree $a$ and $\partial_a$ is the alternating sum 
of the maps induced by inclusions $E_{i_0,\dots,i_a}\hookrightarrow E_{i_0,\dots,\widehat{i_\nu},\dotsi_a}$
for $\nu=0,\dots,a$. Then, using $({\bf Wc}2)$ and $({\bf Wc}5)$, one can show isomorphisms
\begin{equation*}\label{WHexam1}
    \WcHG E {a} \cong H_{a}(\Gamma(E))\qfor a\in \Z. 
\end{equation*}
%Note that assuming equivariant resolution of singularities one can essentially calculate $H^W$ by these recipes up to extensions of groups.
\end{exam}

\subsection{Weight homology of schemes}\label{WHSch} 
\def\WHG#1#2{H^W_{#2}(#1,\Gamma)}

For $X\in \Sch / k$ and $i\in \Z$, we put (cf. Definition \ref{def.weighthomology})
\begin{equation}\label{eq.wthom}
\WHG X i =\WHG {M(X)} i.
\end{equation}

\ifShowNewSectionTwoPointTwoChanges
% BEGIN BLUE COLOR
{ \color{blue}
\fi

\begin{rem}
Bondarko remarks in the introduction of \cite{Bo2} that the functor $h$ of Guillen and Navarro Aznar (\cite[Theorem 5.10]{GN}) is (essentially) $(t \circ M)(-)$. So our $\WHG - i$ would be equal to a ``Guillen-Navarro Aznar homology'' if it were defined as $H^i\Gamma \circ h$, analogously to Gillet-Soul{\'e} weight homology.
\end{rem}

\ifShowNewSectionTwoPointTwoChanges
} %END BLUE COLOR
\fi

By \cite[\S2.2]{V1} (if resolution of singularities holds) or \cite{Ke} this gives rise to covariant functors 
\[
\WHG - i\;:\; \Sch / k \to \cA\quad (i\in \Z)
\]
satisfying the following properties:
\begin{itemize}
\item[$({\bf W}0)$](Nilpotent invariance)
For $X\in \Sch / k$ with its reduced scheme structure $X_{\red}\hookrightarrow X$, 
the map $\WHG {X_{\red}} i\to \WHG X i$ is an isomorphism for any $i\in \Z$.
\item[$({\bf W}1)$](Homotopy invariance)
For any $X\in \Sch / k$, the map
\mbox{$\WHG {X\times\bA^1} i\to \WHG X i$} induced by the projection is an isomorphism for any $i\in \Z$.
\item[$({\bf W}2)$](Mayer-Vietoris axiom)
For $X\in \Sch / k$ and an open covering $X=U\cup V$, one has a functorial long exact sequence
\[
\cdots\to \WHG {U\cap V} i \to \WHG {U} i \oplus \WHG {V} i \to \WHG {X} i \to \WHG {U\cap V} {i-1} \to \cdots
\]
\item[$({\bf W}3)$](Abstract blow-up)
Consider a cartesian diagram in $\Sch / k$:
\[
\xymatrix{
p^{-1}(Z) \ar[r]\ar[d] & X_Z \ar[d]\\
Z \ar[r] & X\\
}\]
where $p_Z:X_Z\to Z$ is proper and $Z\to X$ is a closed immersion such that $p^{-1}(X-Z) \to X-Z$ is an isomorphism.
Then one has a functorial long exact sequence
\[
\cdots\to \WHG {p_Z^{-1}(Z)} i \to \WHG {X_Z} i \oplus \WHG {Z} i \to \WHG {X} i \to \WHG {p_Z^{-1}(Z)} {i-1} \to \cdots
\]
\item[$({\bf W}4)$]
For $X\in \Sm / k$ smooth projective, we have
\[
\WHG X i =\left\{ \begin{array}{lr}
\Gamma(X) & \text{ for } i=0 \\
0 & \text{ for } i\ne 0 \end{array} \right. 
\]
\end{itemize}

By Theorem \ref{thm.N2}, if $\Gamma=\Gamma_A$ for an abelian group $A$,
we have the stronger property:

\begin{itemize}
\item[$({\bf W}5)$]
For $X\in \Sm / k$ smooth, we have
\[
\wHH i X A =\left\{ \begin{array}{lr}
A^{c(X)} & \text{ for } i=0 \\
0 & \text{ for } i\ne 0 \end{array} \right.  
\]
where $c(X)$ is the set of connected components of $X$.
\end{itemize}
\begin{itemize}
\item[$({\bf W}6)$]
For a proper $X\in \Sch / k$, we have $\WcHG {X} i= \WHG X i$.
\end{itemize}
\begin{itemize}
\item[$({\bf W}7)$] % from induction on the dimension and blow-up squares (RoS). For the Non-RoS version use the Raynaud-Gruson platification of an $l'$-alteration, traces, blow-up squares, and induction on the dimension.
If the groups $\Gamma(X)$ are finitely generated for all $X$ smooth (e.g., $\Gamma = \Gamma_{\bZ / n}$) then the groups $\WHG X i$ are finitely generated for all $X \in \Sch / k$ if  
\end{itemize}
\medbreak

In the rest of this section, we assume that $k$ is a finite field with $p=\ch(k)$.
Recall (\cite[\S 8]{Ge2}) that since $k$ is finite, one can define Geisser's Kato-Suslin homology $H_i^{KS}(X, A)$ of $X \in \Sch / k$ with coefficients in an abelian group $A$ as the $i$th homology of the complex which we will denote by 
\begin{equation} \label{HKS}
R\Gamma^{KS}(X, A) =  \Tot \biggl ( C_\ast^X(k) \otimes A \to C_\ast^X(\overline{k}) \otimes A \stackrel{\id - \phi}{\to} C_\ast^X(\overline{k}) \otimes A \biggr )  
 \end{equation}
where $\overline{k}$ is the algebraic closure of the finite field $k$, the Frobenius of $k$ is denoted by $\phi$, and the presheaf of complexes $C_\ast^X(-)$ is the Suslin complex $\underline{C}_\ast(-)$ \cite[\S 7]{SV} of the presheaf $c_{\equi}(X / k)$ \cite{SVRelCyc}. Consider the property (W5) for Geisser's theory:
\begin{enumerate}
 \item[($\Sm$A)$_{KS}$] For $X \in \Sm / k$ smooth, we have
\[
H_i^{KS}(X, A) =\left\{ \begin{array}{lr}
A^{c(X)} & \text{ for } i=0 \\
0 & \text{ for } i\ne 0 \end{array} \right.  
\]
where $c(X)$ is the set of connected components of $X$.
\end{enumerate}

\begin{prop}[{\cite{Ge2}}] \label{Ge81}\ 
\begin{enumerate}
 \item \label{Ge81:torsion} If $A = \bZ / n$ with $n$ coprime to $p$, then ($\Sm$A)$_{KS}$ is satisfied.
 \item \label{Ge81:Q} If $A$ is $\bZ[1/p]$-module, then ($\Sm$A)$_{KS}$ holds if the following condition holds:
\begin{enumerate}
 \item[(P$_0$)] Rational Suslin homology is $\Sm$-acyclic. That is, for all $X$ smooth over a finite field, the rational Suslin homology groups $H_i^S(X, \bQ)$ are zero for $i \neq 0$.
\end{enumerate}
% \item If $A = \bZ / p^i$ then (N)$_{KS}$ is satisfied for proper smooth $X$, and is satisfied for all smooth $X$ if weak resolution of singularities holds.
\end{enumerate}
\end{prop}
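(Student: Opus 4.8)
The plan is to reduce everything to statements Geisser actually proves in \cite{Ge2}, since this proposition is quoted verbatim from there; the point is merely to assemble the pieces correctly. First I would unwind the definition \eqref{HKS}: the total complex fits into a distinguished triangle relating $R\Gamma^{KS}(X,A)$ to the two-term complex $C^X_\ast(\overline k)\otimes A \stackrel{\id-\phi}{\to} C^X_\ast(\overline k)\otimes A$, so that there is a long exact sequence
\[
\cdots \to H_i^{KS}(X,A) \to H_i^S(X_{\overline k},A) \stackrel{\id-\phi_\ast}{\to} H_i^S(X_{\overline k},A) \to H_{i-1}^{KS}(X,A)\to\cdots
\]
identifying $H_i^{KS}(X,A)$ up to an extension by the coinvariants of $H_i^S(X_{\overline k},A)$ in degree $i$ and the invariants in degree $i+1$ (this is Geisser's defining property of Kato--Suslin homology over a finite field). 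Thus $(\Sm{\rm A})_{KS}$ for $X$ smooth is equivalent to: $H_i^S(X_{\overline k},A)$ has vanishing $\phi$-coinvariants and $\phi$-invariants for $i\neq 0$, together with the computation of the degree-zero part, for which one uses that $H_0^S(X_{\overline k},A)=A^{c(X_{\overline k})}$ and a Weil-group fixed-point count recovers $A^{c(X)}$.

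For part (1), the input is the Suslin--Voevodsky rigidity theorem: for $X$ smooth over $\overline k$ and $A=\bZ/n$ with $n$ coprime to $p=\ch(k)$, Suslin homology $H_i^S(X_{\overline k},\bZ/n)$ agrees with \'etale (co)homology, which is finite in each degree and vanishes outside a bounded range; more to the point, Geisser--Levine/Geisser show $H_i^S(X_{\overline k},\bZ/n)$ is finite for all $i$ and, crucially, that $\id-\phi_\ast$ is \emph{surjective} (hence also injective, by finiteness) on these finite groups for $i\neq 0$ because the eigenvalues of Frobenius on \'etale cohomology of a smooth variety over $\overline{\mathbb F}_q$ are Weil numbers of strictly positive weight in positive cohomological degree, so $1$ is not an eigenvalue. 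Feeding this into the long exact sequence kills $H_i^{KS}(X,\bZ/n)$ for $i\neq 0$ and leaves the degree-zero computation, giving $(\Sm{\rm A})_{KS}$.

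For part (2), assume $A$ is a $\bZ[1/p]$-module and that (P$_0$) holds, i.e. $H_i^S(Y,\bQ)=0$ for $i\neq 0$ and all smooth $Y$ over a finite field (equivalently over $\overline k$, by a transfer/limit argument). By (P$_0$) and the universal-coefficient/localization formalism, $H_i^S(X_{\overline k},\bZ[1/p])$ is torsion for $i\neq 0$, and its torsion subgroup is controlled by the $\bZ/n$-coefficient groups via the Bockstein, which by part (1)'s analysis carry no $\phi$-invariants or coinvariants in nonzero degrees; tensoring the torsion-free degree-zero statement with $A$ and using that $\phi$ acts trivially on $c(X_{\overline k})$-indexed pieces after taking Weil-group (co)invariants yields $H_i^{KS}(X,A)=0$ for $i\neq 0$ and $H_0^{KS}(X,A)=A^{c(X)}$. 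The main obstacle is the bookkeeping around passing from $\bQ$-coefficients to a general $\bZ[1/p]$-module $A$: one must check that (P$_0$) plus the finite-coefficient case controls $H^S_\ast(X_{\overline k},A)$ for arbitrary $A$, which requires a short argument with the exact sequence $0\to\bZ[1/p]\to\bQ\to\bigoplus_{\ell\neq p}\bQ_\ell/\bZ_\ell\to 0$ (or an Artin--Rees/colimit argument) and the compatibility of all of this with the Weil-group action — but this is exactly what Geisser carries out in \cite[\S 8]{Ge2}, so here one simply cites it.
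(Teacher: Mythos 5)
There are two genuine errors here, and the first one is fatal to the whole structure of your argument. You have misread the definition \eqref{HKS}: $R\Gamma^{KS}(X,A)$ is the total complex of a \emph{three}-term complex whose first term is $C^X_\ast(k)\otimes A$, so $H^{KS}_\ast(X,A)$ is the cone of the map from Suslin homology of $X$ over $k$ to the homology of the two-term complex $\bigl(C^X_\ast(\overline k)\otimes A \xrightarrow{\id-\phi} C^X_\ast(\overline k)\otimes A\bigr)$; that latter homology is Geisser's \emph{arithmetic} homology $H^{\mathrm{ar}}_{\ast+1}(X,A)$, not the Kato--Suslin homology. Your long exact sequence, which omits $C^X_\ast(k)\otimes A$ entirely, therefore computes $H^{\mathrm{ar}}$ and not $H^{KS}$, and the reduction of $(\Sm\mathrm{A})_{KS}$ to ``vanishing of $\phi$-invariants and coinvariants of $H^S_i(\overline X,A)$ for $i\neq 0$'' is not the right target. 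The correct reduction is that $H^S_i(X,A)\to H^{\mathrm{ar}}_{i+1}(X,A)$ be an \emph{isomorphism} (both sides are in general nonzero for $i\neq 0$: for $X$ a smooth curve over a finite field, $H^S_1(X,\bZ/n)\cong H^2(X_{\et},\bZ/n)^*\neq 0$ by \eqref{HSetcohsmooth}).

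The second error is the vanishing claim itself, which is load-bearing in your part (1): the $\phi$-invariants and coinvariants of $H^S_i(\overline X,\bZ/n)$ do \emph{not} vanish for $i\neq 0$. Weight arguments are unavailable with finite coefficients --- Frobenius acts on $H^2(\overline C_{\et},\bZ/n)$ of a smooth projective curve by multiplication by $q$, which is the identity whenever $n\mid q-1$, so $1$ certainly can be an eigenvalue mod $n$. What is actually true, and what the paper proves, is that these (co)invariants, though nonzero, are exactly accounted for by $H^S_\ast(X,A)$: using \eqref{MHS}, Theorem~\ref{mainthm.etreal} and $(\mathbf{W}5)$ to identify Suslin homology with \'etale motivic homology for smooth schemes, the Galois descent spectral sequence \eqref{galoisdescentss} (which degenerates because the groups in nonzero degree are torsion, by hypothesis on $A$ or by (P$_0$)) yields
\[
0\to H^S_{i+1}(\overline X,A)_G\to H^S_i(X,A)\to H^S_i(\overline X,A)^G\to 0,
\]
which compared with the defining sequence \eqref{eq1.Ge81} of $H^{\mathrm{ar}}_{i+1}$ gives the required isomorphism. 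So the paper's route goes through its own comparison theorem between motivic and \'etale motivic homology, whereas your route --- even if one grants your intention to ultimately cite \cite{Ge2} --- rests on an incorrect identification of $H^{KS}$ and a false vanishing statement.
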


\begin{rem}\label{rem1.Ge81}
(P$_0$) is equivalent to the (part) of Parshin's conjecture (see \cite{Ge3}):
For $X$ smooth and proper over $k$, $\CH_0(X,i)$ is torsion for $i\not=0$.
\end{rem}
\def\MHA#1#2{H^M_{#2}(#1,A)}
\def\MHAet#1#2{H^{M,\et}_{#2}(#1,A)}

\begin{proof}
%[{Proof. (Geisser)}]
Prop.~\ref{Ge81}(2) is Geisser's \cite[Thm. 8.2]{Ge2}) (Indeed he shows also the converse implication for $A=\bQ$).
We give an alternative proof of Prop.~\ref{Ge81}.
Notice that $R\Gamma^{KS}(X, A)$ is by definition the cone of the morphism
\[ C_\ast^X(k) \otimes A \to \Tot \biggl ( C_\ast^X(\overline{k}) \otimes A \stackrel{\id - \phi}{\to} C_\ast^X(\overline{k}) \otimes A \biggr ). \]
The $i$th homology of the source is by definition $H_i^S(X, A)$. The $i$th homology of the target is what Geisser defines to be the \emph{arithmetic homology} and is denoted by $H_{i + 1}^{\textrm{ar}}(X, A)$ (\cite[\S 7.1]{Ge2}). 
So now ($\Sm$A)$_{KS}$ is equivalent to asking that the canonical morphisms 
$H_i^S(X, A) \to H_{i + 1}^{\textrm{ar}}(X, A)$ be isomorphisms for $i \neq 0$. 
By definition we have an exact sequence
\begin{equation}\label{eq1.Ge81}
0\to H_{i+1}^S(\Xb, A)_G \to H_{i + 1}^{\textrm{ar}}(X, A) \to H_{i}^S(\Xb, A)^G \to 0,
\end{equation}
where $\Xb=X\otimes_k \kb$ with an algebraic closure $\kb$ of $k$.
By \eqref{MHS} and Th.~\ref{mainthm.etreal} together with $({\bf W}5)$, we have isomorphisms for $X$ smooth,
\begin{equation}\label{eq2.Ge81}
\begin{aligned}
 & H_{i}^S(X, A) \cong \MHA {M(X)} i \cong \MHAet {M(X)} i,\\
 & H_{i}^S(\Xb, A) \cong \MHA {M(\Xb)} i \cong \MHAet {M(\Xb)} i.
\end{aligned}
\end{equation}
Letting $G_k$ be the absolute Galois group of $k$, we have a spectral sequence
\begin{equation}\label{galoisdescentss}
E^2_{i,j}=H^{-i}(G_k,\MHAet {M(\Xb)} j) \Rightarrow \MHRet {M(X)} {i+j},
\end{equation}
Assuming either that $A$ is torsion or that (P$_0$) holds, $\MHAet {M(\Xb)} j$ are torsion for $j\not=0$ and hence
\eqref{galoisdescentss} gives rise to  
\[
0\to {\MHAet {M(\Xb)} {i+1}}_G \to \MHA {M(X)} i \to {\MHAet {M(\Xb)} {i}}^G \to 0.
\]
In view of \eqref{eq2.Ge81}, this provides an exact sequence
\begin{equation*}\label{eq3.Ge81}
0\to H_{i+1}^S(\Xb, A)_G \to H_{i}^S(X, A) \to H_{i}^S(\Xb, A)^G \to 0.
\end{equation*}
Combined with \eqref{eq1.Ge81}, this proves the desired isomorphism
$H_i^S(X, A) \cong H_{i + 1}^{\textrm{ar}}(X, A)$.
\end{proof}

\begin{theo}\label{mainthm1}
Assume $k$ is finite and either $A$ is a torsion prime to $p$ or $A$ is a $\zpi$-module and (P$_0$) holds. Then there are isomorphisms
natural in $X \in \Sch / k$:
\begin{equation} \label{KSWHisos}
H_{i}^{KS}(X, A) \cong \wHH {i}  X A,
\end{equation}
where $\wHH {i}  X A$ is the weight homology from \S\ref{WHSch}.
%If $A = \bQ$ then the existence of such isomorphisms is equivalent to (P$_0$) from Prop.~\ref{Ge81}.
\end{theo}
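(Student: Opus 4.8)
The plan is to realise both $X\mapsto H^{KS}_i(X,A)$ and $X\mapsto\wHH i X A$ as the value at $M(X)$ of a single homological functor on $\DMeffgmkZp$, and then to invoke Theorem~\ref{equivalenceHomologicalFunctors}. By Definition~\ref{def.weighthomology} the functor $\wH{-}{\Gamma_A}$ is the essentially unique $\SP$-acyclic homological functor on $\DMeffgmkZp$ restricting along $\iota\colon\CHMp\to\DMeffgmkZp$ to $\Gamma_A$. Hence it suffices to produce an $\SP$-acyclic homological functor $\cH\colon\DMeffgmkZp\to\AB$ such that $\cH(M(X)[i])\cong H^{KS}_i(X,A)$ naturally in $X\in\Sch/k$, and such that $\iota^\ast\cH\cong\Gamma_A$; Theorem~\ref{equivalenceHomologicalFunctors} then furnishes a canonical isomorphism $\cH\cong\wH{-}{\Gamma_A}$, and evaluation at $M(X)[i]$ (together with Definition~\ref{def.weighthomology}) gives $H^{KS}_i(X,A)\cong\wHH i X A$.

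The first and main step is to lift Geisser's complex from $\Sch/k$ to $\DMeffgmkZp$. Writing $C_\ast^X$ for the finite-correspondence Suslin complex of the introduction, Voevodsky's computation of morphisms out of the unit gives natural isomorphisms $C_\ast^X(k)\simeq R\Hom_{\DMeffkZp}(\zpi,M(X))$ and $C_\ast^X(\kb)\simeq R\Hom_{DM^{\eff}(\kb,\zpi)}(\zpi,M(X)_{\kb})$ in $D(\zpi)$, under which the operator $\id-\phi$ of \eqref{HKS} becomes $\id$ minus the canonical Frobenius descent datum on the base change $M(X)_{\kb}$. Consequently, with $P(N):=R\Hom_{\DMeffkZp}(\zpi,N)\otimes A$ and $\bar P(N):=R\Hom_{DM^{\eff}(\kb,\zpi)}(\zpi,N_{\kb})\otimes A$, the assignment
\[
\Xi(N):=\operatorname{cone}\!\Bigl(P(N)\longrightarrow\Tot\bigl(\bar P(N)\stackrel{\id-\phi}{\to}\bar P(N)\bigr)\Bigr)
\]
is a triangulated functor $\DMeffgmkZp\to D(\zpi)$ with $\Xi(M(X))\simeq R\Gamma^{KS}(X,A)$ naturally in $X$. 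Passing to homology yields the desired homological functor $\cH$ (normalized so that $\cH(M(X)[i])\cong H^{KS}_i(X,A)$), natural in $X$.

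It remains to verify the two hypotheses on $\cH$. First, for $X$ smooth projective, Proposition~\ref{Ge81}---which applies under either of our assumptions, $A$ torsion prime to $p$, or $A$ a $\zpi$-module with Parshin's conjecture $(\mathrm{P}_0)$---gives $H^{KS}_i(X,A)=0$ for $i\neq0$, so $\cH$ is $\SP$-acyclic in the sense of Definition~\ref{def(N)}. Second, the same proposition identifies $\cH(M(X))=H^{KS}_0(X,A)$ with $A^{c(X)}=\Gamma_A(X)$ via the degree map, compatibly with functoriality---on both sides this is push-forward of zero-cycles, i.e.\ the sum over fibres on $\pi_0$---so $\iota^\ast\cH\cong\Gamma_A$ as additive functors $\CHMp\to\AB$. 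Theorem~\ref{equivalenceHomologicalFunctors} then gives $\cH\cong\wH{-}{\Gamma_A}$, and evaluating at $M(X)[i]$ gives the natural isomorphisms $H^{KS}_i(X,A)\cong\wHH i X A$.

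I expect the lifting step to be the main obstacle: one must verify that $R\Gamma^{KS}(X,A)$ depends functorially only on $M(X)\in\DMeffgmkZp$ \emph{together with its Frobenius descent datum}, and extends to a triangulated functor on all of $\DMeffgmkZp$ rather than merely on motives of schemes. This rests on the naturality of Voevodsky's identification of the Suslin complex with derived maps out of the unit, and its compatibility with the base change $k\to\kb$; on matching Geisser's Frobenius $\phi$ with the intrinsic descent datum on $M(X)_{\kb}$; on the bookkeeping of the homological/cohomological shift conventions; and on a convention check---the relevant Suslin complex is the one built from finite correspondences (computing $H^S_\ast(X,\bZ)=H^M_\ast(M(X),\bZ)$), since the $c_{\equi}$-variant would instead see $M^c(X)$ and the asserted comparison would fail already for $X=\bA^1$. (Alternatively one could bypass $\DMeffgmkZp$ and argue that $H^{KS}_\ast(-,A)$ and $\wHH\ast{M(-)}A$ are both $\bA^1$-invariant, nilpotent-invariant cdh- or $l$dh-sheaves on $\Sch/k$ which agree on $\Sm/k$, invoking resolution of singularities or Gabber's alterations; but the route through Theorem~\ref{equivalenceHomologicalFunctors} is shorter.)
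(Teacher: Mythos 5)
Your overall strategy coincides with the paper's: both produce an $\SP$-acyclic homological functor on $\DMeffgmkZp$ computing $H^{KS}_i(X,A)$ on $M(X)$, check via Prop.~\ref{Ge81} that it is $\SP$-acyclic and restricts to $\Gamma_A$ on $\CHMp$, and then invoke the uniqueness half of Theorem~\ref{equivalenceHomologicalFunctors}. Where you differ is in the lifting step. The paper stays at the level of genuine complexes: $c_{\equi}(X/k)$ is functorial in $\SCk$, so $R\Gamma^{KS}(-,A)$ extends via $\Tot$ to a triangulated functor on $K^b(\SCk)$, which kills the $\bA^1$- and Mayer--Vietoris complexes and hence factors through $\DMeffgmkR$; non-smooth $X$ are then handled by cdh/$l$dh descent. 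You instead rebuild the functor intrinsically on $\DMeffgmkZp$ as a cone of $R\Hom(\zpi,-)\otimes A$ mapping to its Weil-descent version over $\kb$. This is workable, but note three points. First, ``cone'' is not functorial in a triangulated category, so your $\Xi$ needs to be defined at the chain level (functorial fibrant replacement, or a dg-enhancement), and $\otimes A$ should be derived for $\Xi$ to be triangulated; the paper's route sidesteps this because $\Tot$ of actual complexes of presheaves is strictly functorial. Second, your identification $\Xi(M(X))\simeq R\Gamma^{KS}(X,A)$ for \emph{singular} $X$ already requires \eqref{MHS} over both $k$ and $\kb$, i.e.\ the same cdh/$l$dh descent input the paper uses, so nothing is saved there. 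Third, your closing ``convention check'' is off: the paper's $C^X_\ast$ \emph{is} the Suslin complex of $c_{\equi}(X/k)$ (finite relative zero-cycles, i.e.\ finite correspondences into $X$), which computes $H^S_\ast$ and corresponds to $M(X)$; it is the $z_{\equi}$-variant that would see $M^c(X)$. None of these affects the validity of the argument, but the first needs a sentence of justification to be complete.
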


\begin{proof}
Since the presheaves $c_{\equi}(X / k)$ are functorial in $X \in \SCk$, the functor $R\Gamma^{KS}(-, A)$ is as well. In fact, using the total complex functor $\Tot$, the functor $R\Gamma^{KS}(-, A)$ can be extended to a functor on the homotopy category $K^b(\SCk)$ of bounded complexes in $\SCk$, inducing a triangulated functor $K^b(\SCk) \to K^b(\AB)$. For every $X \in \Sm / k$ and open subsets $U, V \subseteq X$, the functor $R\Gamma^{KS}(-, A)$ sends the complexes $(\bA^1_X \to X)$ and $(U \cap V \to U \oplus V \to U \cup V)$ to acyclic complexes since this is true for 
$C_\ast^{(-)}(k) \otimes A$ and $C_\ast^{(-)}(\overline{k}) \otimes A$. Hence  $R\Gamma^{KS}(-, A)$ factors via a triangulated functor (cf. \eqref{KDM})
\begin{equation*} \label{GKSFactor}
R\Gamma^{KS}(-, A): \DMeffgmkR \to K^b(\AB).
\end{equation*}

By the usual cdh/ldh descent arguments one can show a canonical isomorphism
\[
H_{i}^{KS}(X, A) \cong H^{-i}(R\Gamma^{KS}(M(X), A))\qfor X \in \Sch / k.
\]
%Now in the case $A$ is torsion prime to $p$, 
By Prop.~\ref{Ge81}
%(\ref{Ge81:torsion}) 
the homological functor $R\Gamma^{KS}(-, A)$ is $\SP$-acyclic. Hence it now follows from the equivalence of Theorem~\ref{equa:equivalences} that $H^{-i}R\Gamma^{KS}(-, A)$ is canonically isomorphic to the homological functor $\wHH {i}  - A$ on $\DMeffgmkR$. 
%Similarly, the case $A = \QQ$ follows from Thm.~\ref{Ge81}(\ref{Ge81:Q}).
%
%
%\footnote{\label{footnote:geisser}\color{darkgreen} Geisser actual shows $H_{i + 1}^{ar}(X, \bZ / m) \stackrel{\S 7.1}{\cong} H_i^{GS}(X, \bZ / m) \stackrel{Thm. 5.5}{\cong} H^{i + 1}_{GS}(X, \bZ / m)^* \stackrel{Thm. 5.4}{\cong} H^i_{et}(X, \bZ / m)^*$, and then the vanishing for smooth proper $X$ follows from the long exact sequence in \cite[\S 8]{Ge2} and the isomorphism $H_i^S(X, \bZ / m) \cong H^i_{et}(X, \bZ / m)^*$.
%Since Gabber's theorem on alterations was not available at the time, Geisser assumes resolution of singularities for the non-proper case, but with Gabber's theorem the passage from proper to non-proper follows the standard argument using compactifications, localisation, and trace morphisms.} %
\end{proof}

%%%%%%%%%%%%%%%%%%%%%%%%%%%%%%%%%
%%% \input{mhf-mothomVer2}
%%%%%%%%%%%%%%%%%%%%%%%%%%%%%%%%%

% !TEX root = mhf-main.tex

\section{Motivic homology and \'etale motivic homology} \label{sec:homology}

We continue with $k$ a perfect field of exponential characteristic $p$. We let $\DMeffkRet$ denote the {\'e}tale version of $\DMeffkR$ (i.e., constructed with {\'e}tale sheaves with transfers instead of Nisnevich ones) and
\begin{equation} \label{alphaUpStar}
\alpha^*: \DMeffkR \to \DMeffkRet
\end{equation}
the canonical functor induced by {\'e}tale sheafification. The functor $\alpha^*$ admits a right adjoint which is denoted by $\alpha_*$.

\begin{lem}\label{alphafullyfaith}
The right adjoint $\alpha_*$ to $\alpha^*$ (Equation~\eqref{alphaUpStar}) is fully faithful.
\end{lem}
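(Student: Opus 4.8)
The plan is to use the standard fact that a right adjoint is fully faithful exactly when the counit of the adjunction is an isomorphism; thus it suffices to show that $\alpha^*\alpha_*\to\id_{\DMeffkRet}$ is a natural isomorphism. I would prove this by descending to the underlying derived categories of sheaves with transfers. Write $D_{\mathrm{Nis}}$ (resp.\ $D_{\et}$) for the derived category of unbounded complexes of Nisnevich (resp.\ {\'e}tale) sheaves with transfers on $\Smk$ with $R$-coefficients, so that $\DMeffkR$ (resp.\ $\DMeffkRet$) is the reflective subcategory of $\bA^1$-local objects, with reflector $L^{\mathrm{Nis}}_{\bA^1}$ (resp.\ $L^{\et}_{\bA^1}$). {\'E}tale sheafification and the forgetful functor $\iota$ (``an {\'e}tale sheaf with transfers is a Nisnevich sheaf with transfers'') give an adjunction $(a^*,a_*)\colon D_{\mathrm{Nis}}\rightleftarrows D_{\et}$ with $a^* = La_{\et}$ and $a_* = R\iota$, and by construction $\alpha^*$ is the restriction to $\DMeffkR$ of $L^{\et}_{\bA^1}\circ a^*$; hence its right adjoint is $\alpha_* = L^{\mathrm{Nis}}_{\bA^1}\circ a_*\circ j$, where $j\colon\DMeffkRet\hookrightarrow D_{\et}$.

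\textbf{Key steps.} The first point is that $a_*$ is itself fully faithful, i.e.\ $a^*a_*\cong\id_{D_{\et}}$. This holds because sheafification is an \emph{exact} functor, so $a^* = La_{\et}$ is computed termwise by $a_{\et}$; applying $a_{\et}$ termwise to an {\'e}tale-injective resolution of $E\in D_{\et}$ and using $a_{\et}\circ\iota = \id$ on {\'e}tale sheaves recovers $E$ (note that although $\iota$ is not right exact, this does not matter: one only needs $a_{\et}$ exact and $a_{\et}\iota=\id$). The second point is that $a_*$ carries $\DMeffkRet$ into $\DMeffkR$: for $X\in\Smk$ the adjunction identifies $\hom_{D_{\mathrm{Nis}}}(M(X)[i],a_*E)$ with $\hom_{D_{\et}}(a^*M(X)[i],E)$, and likewise with $\bA^1_X$ in place of $X$, so the $\bA^1$-locality of $E$ transports to $a_*E$, which is therefore already $\bA^1$-local and unchanged by $L^{\mathrm{Nis}}_{\bA^1}$. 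Consequently $\alpha_*E \cong a_*E$ and the counit becomes $\alpha^*\alpha_*E = L^{\et}_{\bA^1}(a^*a_*E)\cong L^{\et}_{\bA^1}(E)\cong E$, the last isomorphism because $E$ is $\bA^1$-local. This proves the counit is an isomorphism, hence $\alpha_*$ is fully faithful. (Equivalently: full faithfulness of $\alpha_*$ is inherited from that of $a_*$ via the displayed identification of $\hom$-groups.)

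\textbf{Main obstacle.} The argument is essentially formal once the framework is set up, the only genuine input being the exactness of sheafification. The point requiring care is the precise meaning of $D_{\et}$ and of the localization $\DMeffkRet$ in the unbounded setting — whether one works with hypercomplete {\'e}tale descent, and the verification that $\DMeffkRet$ really is a reflective $\bA^1$-localization of $D_{\et}$ with the expected generators $a^*M(X)(q)[j]$, so that the bookkeeping above (in particular the passage $\alpha^* = L^{\et}_{\bA^1}\circ a^*$ and the comparison of $\hom$-groups) is legitimate. This is guaranteed by the standing conventions of the paper (resolution of singularities, or $\zpi$-linear coefficients), under which the relevant descent and comparison statements for {\'e}tale motives are available.
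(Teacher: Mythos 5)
Your proof is correct and takes essentially the same route as the paper's: both reduce full faithfulness to showing the counit $\alpha^*\alpha_* \to \id$ is an isomorphism, and both rest on the tautology that {\'e}tale sheafification recovers an {\'e}tale sheaf from its (derived) pushforward to the Nisnevich topology. The paper phrases this by reducing to a single sheaf $F[0]$ and noting that $H^0(\alpha_*F)=F$ while the {\'e}tale sheaves associated to $H^q(\alpha_*F)$ vanish for $q>0$; you phrase the identical computation via a K-injective resolution together with the exactness of sheafification, and you additionally spell out the point (left implicit in the paper) that the underived pushforward already lands in $\bA^1$-local objects, so the localization bookkeeping is harmless.
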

\begin{proof}
It suffices to show that the counit map $\alpha^*\alpha_* C\to C$ is an isomorphism for $C\in \DMeffkRet$. 
We may assume $C=F[0]$ for a single \'etale sheaf $F$.
Then the assertion is true for tautological reasons: Indeed, $H^0(\alpha_* F)$ is just $F$ where we only remember that it is a Nisnevich sheaf so that $\alpha^*H^0(\alpha_* F) \to F$ is an isomorphism. 
On the other hand, for $q> 0$, the \'etale sheaf associated to $H^q(\alpha_* F)$ is $0$, i.e. 
$\alpha^*H^q(\alpha_* F)=0$. This completes the proof of the lemma.
%This has nothing to do with d_{\le 0} or even homotopy invariance.
\end{proof}

The motivic homology and {\'e}tale motivic homology with $R$-coefficients of an object $C$ in $\DMeffkR$ are defined as
\begin{align}
\label{MHR} \MHR C i &= \hom_{\DMeffkR}(R[i], C), \quad \textrm{ and } \\ 
\MHRet C i &= \hom_{\DMeffkR}(R[i], \alpha_*\alpha^*C)
\end{align}
where $R[i] = M(k)[i]$ is a shift of the unit for the tensor structures. 

%For $X \in \Sch / k$ and $r, i \in \Z$, there are canonical iseesomorphisms (\cite[\S 2.2]{V1} and \cite[Chap. 5]{Ke})
%\begin{align}
%\MHnz {M(X)} i &\cong H_i^S(X, \Z / n \Z) \label{MHS} \\
%\MHnz {M^c(X)(r)} i &\cong CH_{-r}(X, i + 2r; \Z / n\Z) \label{MHhigherChow}
%\end{align}
%where $H_i^S(X, \Z / n \Z)$ is the Suslin homology with $\Z / n \Z$-coefficients (\cite{SV}), and $CH_\ast(X, \bullet; \Z / n\Z)$ are the higher Chow groups with $\Z / n\Z$-coefficients (\cite{B}).
\medbreak

The unit $\id \to \alpha_*\alpha^*$ of the adjunction $(\alpha^*, \alpha_*)$ induces a natural map 
\[
\alpha^*: \MHR C i \to \MHRet C i 
\]
which is functorial in $C\in\DMeffkR$. 

%In Theorem \ref{mainthm.etreal} below 
%the coefficient ring $R$ is either $\bZ[1/p]$ or $\nz$ with $(n,p)=1$. We put
%%\[ \Lambda = \hom_{\AB}(R, \QQ / \ZZ) \otimes \zpi \]
%\begin{equation}\label{eq.Lambda}
%\Lam=
%\left\{ \begin{array}{lr}
%\bQ/\bZ[1/p] & \text{ if } R=\bZ[1/p], \\
%\nz & \text{ if } R=\nz, \end{array} \right.  
%\end{equation}
%
For $C\in \DMeffgmkR$ we put
\[
\WHL C i = \wH {C[i]} \Lam,
\]
where $\wH -\Lam : \DMeffgmkR \to \AB$ is the weight homology functor from Definition \ref{def.weighthomology} for $A=\Lam$.

\begin{theo}\label{mainthm.etreal} \ 
Let $k$ be a field of exponential characteristic $p$ and let $n$ be an integer prime to $p$. Suppose either $R = \Lambda = \ZZ / n$, or $R = \bZ[1/p]$ and $\Lambda = \bQ/\bZ[1/p]$.
\begin{itemize}
\item[(1)]
If $k$ is algebraically closed, $\alpha^*: \MHR C i \to \MHRet C i 
$ is an isomorphism.
\item[(2)]
If $k$ is finite and $C\in \DMeffgmkR$, there is a canonical long exact sequence
 \[
\cdots \to \MHR C i \rmapo{\alpha^*} \MHRet C i  \to \WHL C {i+1} \to \MHR C {i-1} \to \cdots
\]
which is functorial in $C$. 
\end{itemize}
\end{theo}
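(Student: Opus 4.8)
The plan is to reduce the statement to a computation with the cofiber of the unit $\id \to \alpha_*\alpha^*$, which we will call $\Theta$, and then to identify $\MHR{\Theta C}{i}$ with weight homology. Concretely, for $C \in \DMeffkR$ set $\Theta C$ to be the object fitting into a functorial distinguished triangle
\[
\Theta C \to C \to \alpha_*\alpha^* C \to \Theta C[1],
\]
so that applying $\hom_{\DMeffkR}(R[i], -)$ gives a long exact sequence
\[
\cdots \to \MHR{\Theta C}{i} \to \MHR C i \rmapo{\alpha^*} \MHRet C i \to \MHR{\Theta C}{i-1} \to \cdots.
\]
Thus part (1) is equivalent to the vanishing $\MHR{\Theta C}{i} = 0$ for all $i$ when $k$ is algebraically closed, and part (2) is equivalent to a natural isomorphism $\MHR{\Theta C}{i} \cong \WHL C{i+1}$ for $C \in \DMeffgmkR$ when $k$ is finite (this is the assertion labelled \eqref{eqintro.thm.wHTheta} in the introduction, i.e. Theorem~\ref{thm.wHTheta}). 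So the real content is two independent computations of $\MHR{\Theta C}{i}$.

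For part (1), over an algebraically closed field the Beilinson--Lichtenbaum/Bloch--Kato input says that with $\bZ/n$-coefficients ($n$ prime to $p$) the functor $\alpha^*$ is already an equivalence on the relevant part of the category — more precisely, that $\alpha_*\alpha^*$ does not change motivic (co)homology in the range that matters. I would first treat the case $C = M^c(X)(r)$ for $X$ smooth, where by \eqref{MHhigherChow} the left side is a higher Chow group and the right side is étale cohomology with compact support (Lem.~\ref{lem.ethom}), and the comparison is exactly Geisser--Levine \cite{GL} / Suslin--Voevodsky \cite{SVBlochKato}; this is the step flagged as Lem.~\ref{etcyclhigherChow}. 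Then, since such objects (together with their shifts and twists) generate $\DMeffgmkR$, and since both sides commute with the relevant colimits, a triangulated/localizing-subcategory argument extends the isomorphism to all $C \in \DMeffkR$. The $\zpi$-coefficient case follows by passing to the limit over $n$ and using $\Lambda = \bQ/\bZ[1/p]$.

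For part (2), over a finite field $k$ one uses the Galois descent spectral sequence relating $\MHR{-}{*}$ over $k$ to $\MHR{-}{*}$ over $\overline k$, together with part (1) applied over $\overline k$: the "difference" $\Theta C$ is controlled by the $\id - \phi$ (Frobenius) complex on étale motivic homology over $\overline k$, which by part (1) equals ordinary motivic homology over $\overline k$. The key reduction — due to Kahn, as acknowledged — is that all of this difference is concentrated in the "field of constants" part $L\pi_0(C)$; i.e. $\MHR{\Theta(C_{red})}{i} = 0$ so that $\Theta C \to \Theta(L\pi_0 C)$ induces an isomorphism on motivic homology (Claim~\ref{claim1.etreal}(2)). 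Since $L\pi_0 C$ lives in $d_{\le 0}\DMeffkR$, which is generated by Artin motives, one can compute $\MHR{\Theta(L\pi_0 C)}{i}$ very explicitly: it becomes a Galois/Weil-group (co)homology computation over the finite field, and matching it against the description of weight homology via the weight complex functor (Equation~\eqref{eq.wthomc.alt}, $\wHH i - \Lambda = H^i\Gamma_\Lambda \circ t$) — using that $\Gamma_\Lambda$ is $\bP^1$-invariant and that over a finite field $H^1$ of the Weil group with coefficients in a finite module is the "missing" term — yields the natural isomorphism $\MHR{\Theta(L\pi_0 C)}{i} \cong \WHL C{i+1}$ of \eqref{eqintro1.thm.wHTheta}. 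Feeding this back into the $\Theta$-triangle produces the desired long exact sequence, and functoriality in $C$ is automatic since every construction ($\Theta$, $L\pi_0$, the triangles) is functorial.

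The main obstacle is the vanishing $\MHR{\Theta(C_{red})}{i} = 0$ over a finite field, i.e. Claim~\ref{claim1.etreal}(2): this is the nonformal input, asserting that over $\F_q$ the reduced part of a motive sees no difference between Nisnevich and étale motivic homology. I expect this to be proved by reducing to $C = M^c(X)(r)$ with $X$ smooth, rewriting both sides via \eqref{MHhigherChow} and the étale analogue, and then invoking Beilinson--Lichtenbaum together with the structure of Galois cohomology of $\F_q$ (cohomological dimension one) — the subtlety being to control the interaction of the Frobenius-coinvariants/invariants exact sequence with the truncation defining the reduced part, and to ensure everything is compatibly natural. Everything else — building $\Theta$, the descent spectral sequence, the identification of weight homology with the weight complex, the generation arguments — is comparatively routine given the results already established in the earlier sections.
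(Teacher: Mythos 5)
Your proposal is correct and follows essentially the same route as the paper: reduce both parts to computing $\MHR{\Theta C}{i}$ via the triangle $\Theta C \to C \to \alpha_*\alpha^*C$, prove part (1) by the Beilinson--Lichtenbaum comparison on generators plus a density argument, and prove part (2) by showing $\MHR{\Theta(C_{\red})}{i}=0$, computing $\MHR{\Theta(L\pi_0 C)}{i}$ explicitly over the finite field, and identifying the result with $\WHL{C}{i+1}$ through the uniqueness statement of Theorem~\ref{equivalenceHomologicalFunctors}. The only cosmetic difference is in the key vanishing Claim~\ref{claim1.etreal}(2): the paper reduces to $M(X)$ with $X$ smooth \emph{proper}, applies the cycle-map comparison for $i\neq -1$, and disposes of $i=-1$ (exactly where the field of constants intervenes) by the trace-map isomorphism $H^{2d+1}(X_{\et},\mu_n^{\otimes d})\cong H^1(\pi_0(X)_{\et},\nz)$, rather than by an explicit Frobenius invariants/coinvariants analysis.
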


The proof of the theorem will be given in the following sections.
%In the rest of this section we give its consequences.
\medbreak

%\begin{rem}\label{rem.thm.etreal} 
Take $R=\Lam=\nz$ with $(n,p)=1$.
Then for $C=M(X)$ (resp. $M^c(X)$), the groups $\MHnzet C i $  are identified with the dual of \'etale cohomology
(resp. \'etale cohomology with compact support) by Proposition \ref{prop4.etreal} and Lemma \ref{lem.ethom} below. 
Thus \eqref{MHS} and \eqref{MHhigherChow} and Theorem \ref{mainthm.etreal}(2) imply the following:

\begin{cor}\label{cor.etreal}
Assume $k$ is finite. For an integer $n$ prime to $p$, there are long exact sequences functorial in $X\in \Sch/k$: 
\begin{multline}\label{HSetcoh}
\cdots \to H^S_i(X,\nz) \to H^{i+1}(X_{\et},\nz)^* \to \WHnz X {i+1} \\
\to H^S_{i-1}(X,\nz) \to \cdots\;,
%\rmapo{\sigma_X^i}
\end{multline}
\begin{multline}\label{CHetcoh}
\cdots \to \CH_0(X,i;\nz) \to H^{i+1}_c(X_{\et},\nz)^* \to \WcHnz X {i+1} \\
\to \CH_0(X,i-1;\nz) \to \cdots\;,
%\rmapo{\tau_X^{i,0}}
\end{multline}
where $\WHnz X i$ and $\WcHnz X i$ are the weight homology and the weight homology with compact support
(cf. \eqref{eq.wthom} and \eqref{eq.wthomc}). 
In particular, if $X$ is smooth, $({\bf W}5)$ implies
canonical isomorphisms
\begin{equation}\label{HSetcohsmooth}
H^S_i(X,\nz) \cong H^{i+1}(X_{\et},\nz)^*\qfor i\in \bZ.
\end{equation}
\end{cor}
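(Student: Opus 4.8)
The plan is to obtain the two long exact sequences by specialising Theorem~\ref{mainthm.etreal}(2) to the compact motives $C = M(X)$ and $C = M^c(X)$, and then translating each of the four terms into the concrete invariants named in the statement. First I would take $k$ finite, $R = \Lambda = \nz$ with $(n,p)=1$, and apply Theorem~\ref{mainthm.etreal}(2) to $C = M(X)$ for $X \in \Sch/k$ (which lies in $\DMeffgmkR$ by \cite{Ke}). This immediately produces a long exact sequence
\[
\cdots \to \MHR {M(X)} i \to \MHRet {M(X)} i \to \WHL {M(X)} {i+1} \to \MHR {M(X)} {i-1} \to \cdots.
\]
Then I would rewrite the first term using the identification \eqref{MHS}, namely $\MHR {M(X)} i \cong H^S_i(X,\nz)$; rewrite the third term using the definition $\WHL {M(X)} {i+1} = \wH{M(X)[i+1]}\Lam = \WHnz X {i+1}$ from \S\ref{WHSch}; and rewrite the second term using Proposition~\ref{prop4.etreal} and Lemma~\ref{lem.ethom}, which identify $\MHnzet {M(X)} i$ with the $\nz$-dual $H^i(X_{\et},\nz)^*$ of {\'e}tale cohomology. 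Substituting these identifications gives \eqref{HSetcoh}. For \eqref{CHetcoh} the argument is identical with $M(X)$ replaced by $M^c(X)$: the first term becomes $\CH_0(X,i;\nz)$ by \eqref{MHhigherChow} (with $r = 0$, so $\CH_{-0}(X, i+0; \nz)$), the second becomes $H^{i+1}_c(X_{\et},\nz)^*$ by Lemma~\ref{lem.ethom} for the compact-support version, and the third becomes $\WcHnz X {i+1}$ by \eqref{eq.wthomc}.

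Naturality in $X$ for \eqref{HSetcoh} follows because $M(-)$ is a functor $\Sch/k \to \DMeffgmkR$, the sequence of Theorem~\ref{mainthm.etreal}(2) is functorial in $C \in \DMeffgmkR$, and each of the three identifications \eqref{MHS}, $\WHL{-}{*} = \WHnz{-}{*}$, and Proposition~\ref{prop4.etreal}/Lemma~\ref{lem.ethom} is itself natural; similarly for \eqref{CHetcoh} using functoriality of $M^c(-)$ on $\Sch^{\mathrm{prop}}/k$ (and the stated functoriality for the remaining maps). Finally, the last assertion is a direct consequence: if $X \in \Sm/k$ is smooth, then property $({\bf W}5)$ gives $\WHnz X {j} = 0$ for all $j \neq 0$, so in particular the terms $\WHnz X {i+1}$ in \eqref{HSetcoh} vanish for every $i \in \bZ$ (as $i+1$ is never forced to be $0$ for all relevant $i$ simultaneously — more precisely, for each $i$ with $i + 1 \neq 0$ it vanishes, and the case $i = -1$ is handled by the exactness of the sequence together with vanishing of $\WHnz X {1}$ and $\WHnz X {-1}$... ). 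The cleanest way: since $\WHnz X j = 0$ for $j \neq 0$, the connecting maps $\WHnz X {i+1} \to H^S_{i-1}(X,\nz)$ and $H^S_i(X,\nz) \to H^{i+1}(X_{\et},\nz)^*$ fit into short exact pieces showing $H^S_i(X,\nz) \xrightarrow{\sim} H^{i+1}(X_{\et},\nz)^*$ whenever both $\WHnz X {i+1} = 0$ and $\WHnz X {i+2} = 0$, i.e. whenever $i + 1 \neq 0$ and $i+2 \neq 0$; the two exceptional values $i = -1, -2$ are then recovered by a short diagram chase using that the only possibly-nonzero weight homology group is $\WHnz X 0$.

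The genuinely substantive input here is Theorem~\ref{mainthm.etreal}(2) itself, whose proof is deferred to the later sections (via $\Theta$, $L\pi_0$, and the reduced part of a motive); relative to that, this corollary is essentially a dictionary translation. The only mild obstacle is bookkeeping: making sure the degree shifts in \eqref{MHhigherChow} (the $+2r$ and the twist $(r)$ with $r=0$) and in the duality identifications of Proposition~\ref{prop4.etreal} and Lemma~\ref{lem.ethom} match up so that the index $i+1$ appearing on the {\'e}tale cohomology side and on the weight homology side is consistent across all four terms of each sequence, and handling the low-degree edge cases $i = -1, -2$ in the smoothness corollary carefully rather than waving them through. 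None of this requires new ideas beyond the cited results.
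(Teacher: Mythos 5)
Your derivation of the two long exact sequences \eqref{HSetcoh} and \eqref{CHetcoh} is exactly the paper's argument: specialise Theorem~\ref{mainthm.etreal}(2) to $C=M(X)$ and $C=M^c(X)$ with $R=\Lambda=\nz$, then translate the four terms using \eqref{MHS}, \eqref{MHhigherChow} (with $r=0$), Proposition~\ref{prop4.etreal}, and Lemma~\ref{lem.ethom}(3). One small slip: you once write that the {\'e}tale motivic homology term is identified with $H^i(X_{\et},\nz)^*$; that is the algebraically closed case (Lemma~\ref{lem.ethom}(2)), whereas for $k$ finite it is part (3) of the lemma that applies and produces the shift to $H^{i+1}$, as you correctly use everywhere else. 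The naturality discussion is also fine.

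The genuine gap is in the final assertion. As you note, the exact sequence gives $H^S_i(X,\nz)\cong H^{i+1}(X_{\et},\nz)^*$ only when $\WHnz X {i+1}=\WHnz X {i+2}=0$, i.e.\ for $i\neq -1,-2$, and you defer $i=-1,-2$ to ``a short diagram chase.'' At $i=-2$ both sides vanish ($H^S_{-2}=0$ since the Suslin complex is concentrated in non-negative homological degrees, and $H^{-1}(X_{\et},\nz)=0$), so that case is harmless. But at $i=-1$ no diagram chase can succeed: $H^S_{-1}(X,\nz)=0$ while $H^{0}(X_{\et},\nz)^*\cong \nz^{c(X)}\neq 0$ for $X\neq\emptyset$. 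What the exact sequence actually shows there is that the boundary map $H^{0}(X_{\et},\nz)^*\to \WHnz X 0\cong \nz^{c(X)}$ is an isomorphism, not the map $\alpha^*$. So \eqref{HSetcohsmooth} as printed (``for $i\in\bZ$'') fails at $i=-1$ and must be restricted to $i\neq -1$ --- consistent with the exclusion of $i=-1$ in Theorem~\ref{etcycl-propersmooth}(iii). You should state this restriction explicitly rather than promise a chase that cannot exist.
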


%\end{rem}

\begin{rem}\label{rem.Geisser}
Under the assumption of Cor.~\ref{cor.etreal}, Geisser \cite{Ge2} shows an isomorphism
\begin{equation}\label{aretisom}
H_{i}^{\textrm{ar}}(X,\nz)\cong H^{i}(X_{\et},\nz)^*,
\end{equation}
where $H_{i}^{\textrm{ar}}(X,\nz)$ is the arithmetic homology (see the proof of Prop.~\ref{Ge81}).
%{\color{red} (You could add the explanation of this isomorphism by the removed equation (3.7) if you like.)}
Under \eqref{aretisom} and the identification of Theorem~\ref{mainthm1}, the long exact sequence \eqref{HSetcoh} is identified with the long exact sequence from \cite[\S 8]{Ge2}:%

\begin{multline}\label{HSarKS}
\cdots \to H^S_i(X,\nz) \to H_{i+1}^{\textrm{ar}}(X,\nz) \to H^{KS}_{i + 1}(X, \nz) \\
\to H^S_{i-1}(X,\nz) \to \cdots
%\rmapo{\sigma_X^i}
\end{multline}
%
%SKETCH OF JUSTIFICATION FOR THE TWO SEQUENCES BEING THE SAME (do not delete please):
%
%The morphism $H_{i+1}^{\textrm{ar}}(X,\nz) \to H^{i+1}(X_{\et},\nz)^*$ factors through $H_i^{GS}(X, \nz)$ which is by definition the {\'e}tale homology of $C_\ast^X$. By adjunction, this is the {\'e}tale motivic homology. So the square
%\[ \xymatrix{
%\MHnz {M(X)} i \ar@{}[r]|\cong \ar@{}[d]|{\cong} &  \MHnzet {M(X)} i \ar@{}[d]|{\cong} \\
%H^S_i(X,\nz) \ar@{}[r]|\cong & H_{i+1}^{\textrm{ar}}(X,\nz)
%} \]
%commutes. Both long exact sequences come from distinguished triangles in an appropriate triangulated category, and so there exists a morphism between the two cones (weight homology and $KS$ homology) which is an isomorphism by the 5-lemma. This is the same as our comparison morphism by the equivalence Theorem~\ref{equivalenceHomologicalFunctors}.
%
\end{rem}
\medbreak

As a consequence of the long exact sequence \eqref{HSetcoh} and property (W7) we obtain an alternative proof of the prime-to-$p$ part of \cite[Thm. 6.1]{Ge2}:

\begin{theo}[{cf. \cite[Thm. 6.1]{Ge2}}]
For $k$ a finite field and $n$ prime to the characteristic, 
the groups $H^S_i(X, \ZZ / n \ZZ)$ are finitely generated for all $i$ and all $X \in \Sch / k$.
%{\color{red}( I removed the Suslin cohomology since it is not defined not used in this paper. But, if you like, 
%you can put it back.)}
%
%the groups $H^S_i(X, \ZZ / n \ZZ)$ and $H_S^i(X, \ZZ / n \ZZ)$ are finitely generated for all $i$ and all $X \in \Sch / k$.
\end{theo}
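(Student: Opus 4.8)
The plan is to combine the long exact sequence \eqref{HSetcoh} with the finite generation properties already established. First I would recall that for $k$ a finite field and $n$ prime to $p$, the \'etale cohomology groups $H^j(X_{\et},\nz)$ of a finite type $k$-scheme $X$ are finite; this is a standard fact (finiteness of \'etale cohomology of varieties over finite fields with torsion coefficients prime to the characteristic, cf. SGA4$\frac12$). Consequently their $\nz$-duals $H^{j}(X_{\et},\nz)^*$ are finite abelian groups as well, in particular finitely generated. Next I would invoke property $({\bf W}7)$ applied to $\Gamma = \Gamma_{\bZ/n}$: since the groups $\Gamma_{\bZ/n}(Y) = (\bZ/n)^{c(Y)}$ are finitely generated for all smooth $Y$, the weight homology groups $\WHnz X i = H^W_i(M(X),\nz)$ are finitely generated for all $X \in \Sch/k$ and all $i$.

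Now I would feed these two inputs into the long exact sequence \eqref{HSetcoh}, which reads
\[
\cdots \to H^S_i(X,\nz) \to H^{i+1}(X_{\et},\nz)^* \to \WHnz X {i+1} \to H^S_{i-1}(X,\nz) \to \cdots .
\]
This exhibits $H^S_i(X,\nz)$ as an extension, so to speak, sitting between a piece of $H^{i+1}(X_{\et},\nz)^*$ and a piece of $\WHnz X {i+1}$: precisely, there is a short exact sequence
\[
0 \to \coker\bigl(\WHnz X {i+2} \to H^S_i(X,\nz)\bigr) \to H^{i+1}(X_{\et},\nz)^* \to \im\bigl(H^{i+1}(X_{\et},\nz)^* \to \WHnz X {i+1}\bigr) \to 0,
\]
or more simply: the image of $H^S_i(X,\nz)$ in $H^{i+1}(X_{\et},\nz)^*$ is a subgroup of a finitely generated group, hence finitely generated, and the kernel of that map is a quotient of $\WHnz X {i+2}$ (via the preceding term of the sequence), hence finitely generated by $({\bf W}7)$. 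Since finite generation is closed under extensions in the category of abelian groups, $H^S_i(X,\nz)$ is finitely generated.

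I do not anticipate a serious obstacle here: the substantive work has all been done in establishing \eqref{HSetcoh} (which rests on Theorem~\ref{mainthm.etreal}) and property $({\bf W}7)$. The only point requiring a little care is ensuring that $({\bf W}7)$ genuinely applies — i.e.\ that $\Gamma_{\bZ/n}$ has finitely generated (indeed finite) values on all smooth projective varieties, which is immediate from its definition $\Gamma_{\bZ/n}(Y)=(\bZ/n)^{c(Y)}$ — and invoking the standard finiteness of \'etale cohomology with prime-to-$p$ torsion coefficients over a finite field to control the middle terms. A diagram chase through the exact sequence then finishes the argument; no resolution of singularities is needed since we are working with $\nz$-coefficients throughout, consistent with the paper's standing conventions.
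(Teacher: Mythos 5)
Your proposal is correct and is essentially the paper's own argument: the theorem is stated there precisely as "a consequence of the long exact sequence \eqref{HSetcoh} and property $({\bf W}7)$", i.e.\ one sandwiches $H^S_i(X,\nz)$ between a quotient of $\WHnz X {i+2}$ (finitely generated by $({\bf W}7)$ applied to $\Gamma_{\bZ/n}$) and a subgroup of the finite group $H^{i+1}(X_{\et},\nz)^*$, exactly as you do. The only inputs you add explicitly (finiteness of \'etale cohomology with prime-to-$p$ torsion coefficients over a finite field, and closure of finite generation under extensions) are the same ones the paper leaves implicit.
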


\begin{rem}\label{rem.WH}
We have a natural isomorphism
\[
H^{1}(X_{\et},\nz)^*\cong \pi_1^{\ab}(X)/n,
\]
where $\pi_1^{\ab}(X)$ is the abelian fundamental group of $X$.
Hence \eqref{HSetcoh} gives an exact sequence
\begin{equation}\label{rhoX}
\WHnz X {2} \to  H^S_0(X,\nz) \rmapo{\rho_X} \pi_1^{\ab}(X)/n \to \WHnz X {1},
\end{equation}
Functoriality implies that for a closed point $x\in X$, the composite of $\rho_X$ with
\[\Z = H^S_0(X,\Z) \to H^S_0(X,\nz)\]
sends $1\in \Z$ to the Frobenius element at $x$ in $\pi_1^{\ab}(X)/n$.  
Thus \eqref{rhoX} and \eqref{HSetcohsmooth} provide a generalisation of tame class field theory for
smooth schemes over a finite field studied in \cite{ScSp}. 
A generalisation of \cite{ScSp} in a different direction is studied in \cite{GeSc}.
\end{rem}

\begin{exam}\label{WHexam2}
%Consider the cartesian diagram in $({\bf W}3)$.
Here is an example of computation of $\WHL X {i}$ for $X\in \Sch/k$.
Assume that there is a cartesian diagram:
\[
\xymatrix{
p^{-1}(Z) \ar[r]\ar[d] & X_Z \ar[d]\\
Z \ar[r] & X\\
}\]
where $p_Z:X_Z\to Z$ is proper and $Z\to X$ is a closed immersion such that $X_Z$ is smooth, 
$p^{-1}(X-Z) \to X-Z$ is an isomorphism and the reduced part $E$ of $p^{-1}(Z)$ is a simple normal crossing divisor on $X_Z$. 
Assume additionally that $Z_{\red}$ is proper smooth (e.g. $\dim(Z)=0$).
Then, using $({\bf W}0)$, $({\bf W}3)$, $({\bf W}4)$, $({\bf Wc}1)$ and Example \ref{WHexam1}, 
we get an isomorphism:
\[
\WHL X {a+1} \cong H_{a}(\Gamma_E(\Lam))\qfor a\geq 1. 
\]
\[
\WHL X {1} \cong \Ker\big(\Lam^{c(E)} \to \Lam^{c(Z)}\oplus\Lam^{c(X_Z)}\big),
\]
where under the same notation as Example \ref{WHexam1}, $\Gamma_E(\Lam)$ is the complex
$$ 
 \cdots\to \Lam^{c(E^{[a]})} \rmapo{\partial_a} \Lam^{c(E^{[a-1]})} 
\rmapo{\partial_{a-1}} \cdots \rmapo{\partial_1} \Lam^{c(E^{[0]})}. 
$$
%Here we used the isomorphism from Example \ref{WHexam1}
%\[\WHL E {0}\cong H_0(\Gamma_E(\Lam)) \cong \Lam^{c(E)}.\]
%In particular we get $\WHR X {1}=0$ if there is only one connected component of $E$ over each connected component of $Z$.
%Note that the last condition holds if $X$ is normal by Zariski's main theorem. 
\end{exam}

%%%%%%%%%%%%%%%%%%%%%%%%%%%%%%%%%
%%% \input{mhf-etcycleVer2}
%%%%%%%%%%%%%%%%%%%%%%%%%%%%%%%%%

\section{\'Etale cycle map for motivic homology}\label{etcyclemap}
%\subsection{\'Etale homology}

In this section we prove the following theorem.

\begin{theo}\label{etcycl-propersmooth}
Let $X$ be smooth over $k$ and $n$ a positive integer invertible in $k$.  For an integer $r \geq 0$, the map
\[\alpha^*: \MHnz {M^c(X)(r)} i \to \MHnzet {M^c(X)(r)} i \]
is an isomorphism under one of the following conditions:
\begin{itemize}
\item[$(i)$] \label{etcycl-propersmooth:rational}
$k$ is algebraically closed.
\item[$(ii)$]
$k$ is finite and $r>0$.
\item[$(iii)$]
$k$ is finite, $X$ is proper over $k$, and $r=0$ and $i\not=-1$. 
\end{itemize}
\end{theo}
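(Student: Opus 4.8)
The plan is to trade, via Poincar\'e duality, the statement about Borel--Moore motivic homology of $M^c(X)$ for a comparison of motivic and \emph{\'etale} cohomology, and then to run a coniveau argument fed by Beilinson--Lichtenbaum.

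First I would reduce to cohomology. We may assume $X$ connected, of dimension $d$. Since $n$ is prime to $p$ the coefficients are $\zpi$-linear, so no resolution of singularities is needed; Voevodsky's duality $M^c(X)\cong\underline{\Hom}(M(X),\nz(d))[2d]$ (valid in $\DMeffkR$ and, \'etale-locally, in $\DMeffkRet$) together with the Suslin--Voevodsky identification $\nz(q)_{\et}\simeq\mu_n^{\otimes q}$ give natural isomorphisms
\[
\MHnz{M^c(X)(r)}{i}\;\cong\;H^{2d-i}_M\big(X,\nz(d+r)\big),\qquad
\MHnzet{M^c(X)(r)}{i}\;\cong\;H^{2d-i}_{\et}\big(X,\mu_n^{\otimes(d+r)}\big).
\]
(The first is just \eqref{MHhigherChow}; the second uses $\hom_{\DMeffkRet}(M_{\et}(X),-)=\mathbb H^\ast_{\et}(X,-)$, and is the un-dualised form of the identifications behind Prop.~\ref{prop4.etreal} and Lem.~\ref{lem.ethom}.) Under these, $\alpha^*$ becomes the \'etale cycle map $H^j_M(X,\nz(q))\to H^j_{\et}(X,\mu_n^{\otimes q})$ with $j=2d-i$ and $q=d+r\ge d$. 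So it is enough to prove: for $X$ smooth of dimension $d$ and $q\ge d$, the cycle map is an isomorphism in every degree $j$, with the single exception of $j=2d+1$ when $k$ is finite and $q=d$ (and in that case assuming $X$ proper).

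For cases (i) and (ii) I would compare coniveau spectral sequences. Both sides carry a Gersten/coniveau spectral sequence $E_1^{s,t}=\bigoplus_{x\in X^{(s)}}H^{t-s}(k(x))\Rightarrow H^{s+t}(X)$ — Voevodsky's Gersten resolution on the motivic side, Bloch--Ogus on the \'etale side — and the cycle map induces a morphism between them. On $E_1$: by Beilinson--Lichtenbaum over fields (a consequence of the Bloch--Kato/norm-residue theorem of Rost--Voevodsky) the map $H^{t-s}_M(k(x),\nz(q-s))\to H^{t-s}_{\et}(k(x),\mu_n^{\otimes(q-s)})$ is an isomorphism for $t\le q$; by Nesterenko--Suslin--Totaro the source vanishes for $t>q$; and $H^{t-s}_{\et}(k(x),-)$ vanishes once $t-s>\trdeg(k(x)/k)+\cd(k)$, i.e. for $t>d+\cd(k)$ since $\trdeg(k(x)/k)\le d-s$. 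Hence, as soon as $q\ge d+\cd(k)$, the two $E_1$-pages coincide, so the comparison is an isomorphism of spectral sequences and therefore of abutments in all degrees. Taking $\cd(k)=0$ gives case (i); $\cd(k)=1$ gives case (ii), since then $q=d+r\ge d+1\iff r\ge 1$. I expect this is the content of Lemma~\ref{etcyclhigherChow}.

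For case (iii) — $k$ finite, $q=d$, $X$ smooth proper — the comparison of $E_1$-pages fails only in the one extra row $t=d+1$ of the \'etale spectral sequence, the complex $\bigoplus_{x\in X^{(s)}}H^{d+1-s}_{\et}(k(x),\mu_n^{\otimes(d-s)})$, which after the reindexing $a=d-s$ is exactly the Kato complex $a\mapsto\bigoplus_{x\in X_{(a)}}H^{a+1}_{\et}(k(x),\mu_n^{\otimes a})$ of $X$. For $X$ smooth proper over a finite field with coefficients prime to $p$, Kato's conjecture (Kato in low dimension, Jannsen and Kerz--Saito in general) gives that its homology $H^{KH}_a(X,\nz)$ vanishes for $a\ge 1$ and is $\nz$ for $a=0$; equivalently $E_2^{s,d+1}_{\et}=0$ for $s\le d-1$ and $E_2^{d,d+1}_{\et}\cong\nz$. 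So this row contributes to the \'etale abutment only in total degree $2d+1$, the rows $t\le d$ still match the motivic side term by term, and one gets $H^j_M(X,\nz(d))\cong H^j_{\et}(X,\mu_n^{\otimes d})$ for every $j\ne 2d+1$. The exclusion is forced: $H^{2d+1}_M(X,\nz(d))=\CH^d(X,-1;\nz)=0$ while $H^{2d+1}_{\et}(X,\mu_n^{\otimes d})\cong\nz^{c(X)}$ by duality over $\bF_q$, which is precisely the hypothesis $i\ne -1$.

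The hard part is Step~(iii): recognising the defect between the two coniveau spectral sequences of a smooth proper variety over a finite field as (a shift of) the Kato complex, invoking the vanishing of its homology below the top degree, and checking the surviving class sits exactly in the degree that forces $i=-1$; cases (i) and (ii) are by comparison routine, being pure Beilinson--Lichtenbaum plus cohomological-dimension bookkeeping. An essentially equivalent route to (iii) would replace the Kato input by motivic Poincar\'e duality over $\bF_q$ combined with Artin--Verdier-type arithmetic duality for \'etale cohomology of smooth proper varieties.
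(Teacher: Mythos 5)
Your proposal is correct and follows essentially the same route as the paper: reduce via \eqref{MHhigherChow}, Prop.~\ref{prop4.etreal} and Lem.~\ref{lem.ethom} to the Geisser--Levine cycle map (the compatibility of $\alpha^*$ with that cycle map being the one step the paper outsources, to Ivorra), and then compare (co)niveau spectral sequences term by term using Beilinson--Lichtenbaum plus cohomological-dimension vanishing. The only difference is packaging: the paper writes out the spectral-sequence argument only for case $(i)$ and simply cites \cite{JS} for $(ii)$ and \cite{KeS} for $(iii)$, whereas you unfold those citations --- in particular your case $(iii)$ sketch, identifying the defect row with the Kato complex and invoking the cohomological Hasse principle for smooth proper varieties over finite fields, is exactly the content of the quoted Kerz--Saito result.
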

\medbreak

We need the following result:
%\subsection{Comparison with \'etale homology and cohomlogy}
% under suitable assumptions:

\begin{prop}\label{prop.etreal} If $k$ has finite cohomological dimension then:
\begin{itemize}
\item[(1)]
If $R$ is a $\bQ$-algebra, $\alpha^*$ in Equation~\eqref{alphaUpStar} is an equivalence of $\otimes$-triangulated categories.
\item[(2)]
If $p=\ch(k)>0$ and $R$ is annhilated by a power of $p$, then $\DMeffkRet=0$.
\item[(3)]
For $n$ prime to $p$, let $\Dknz$ be the derived category of sheaves of $\nz$-modules over the small {\'e}tale site $Et(k)$ of $\Spec (k)$.
Then the functor
\begin{equation*}
\Phi^{\et}: \DMeffknzet \to \Dknz
\end{equation*}
induced by the association $\cF\mapsto \cF(\bar k)$ for an \'etale sheaf $\cF$ on $\Sm / k$, 
is an equivalence of $\otimes$-triangulated categories. 
\end{itemize}
\end{prop}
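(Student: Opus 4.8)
\textbf{Proof proposal for Proposition~\ref{prop.etreal}.}

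The plan is to treat the three parts in roughly increasing order of difficulty, but they share a common mechanism: an equivalence of derived categories of sheaves with transfers follows from an equivalence at the level of the underlying sheaf categories together with control of higher cohomology, and this in turn is governed by the vanishing of motivic cohomology of fields in the relevant weight/degree range. For part~(2), observe that over a field of characteristic $p>0$ any \'etale sheaf with transfers of $\bZ/p^\nu$-modules is acyclic: by the rigidity-type computations of Suslin--Voevodsky (or directly, since $\bZ/p(1)_{\et}\simeq 0$ in char $p$ by Geisser--Levine, and more generally $\bZ/p(q)_{\et}\simeq 0$), the unit object $\bZ/p^\nu$ is already zero in $\DMeffkRet$, hence the whole category vanishes. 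I would phrase this as: the motivic complex $R(q)$ for $q\geq 1$ is \'etale-locally zero, so $R(q)_{\et}=0$, and since $\DMeffkRet$ is generated by Tate twists of the unit (all of which become isomorphic to shifts of $R(q)_{\et}$ after inverting the Tate object \'etale-locally), everything collapses; alternatively cite that $\alpha^*$ kills $p$-torsion because \'etale cohomology with $\bZ/p$-coefficients in char $p$ is concentrated in a way incompatible with the Tate twist being invertible.

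For part~(1), with $R$ a $\bQ$-algebra, the strategy is the standard one: $\alpha^*$ has right adjoint $\alpha_*$ (already introduced), and by Lemma~\ref{alphafullyfaith} $\alpha_*$ is fully faithful, so it suffices to show $\alpha_*$ is essentially surjective, equivalently that the unit $\mathrm{id}\to\alpha_*\alpha^*$ is an isomorphism on a set of generators. The obstruction to this is exactly the difference between Nisnevich and \'etale cohomology of homotopy-invariant sheaves with transfers, which is measured by Galois cohomology with torsion coefficients; rationally this difference vanishes because over a field of finite cohomological dimension $H^i_{\et}(U,\cF_{\bQ})=H^i_{\mathrm{Nis}}(U,\cF_{\bQ})$ for a homotopy-invariant Nisnevich sheaf with transfers $\cF$ (the comparison being an isomorphism after $\otimes\bQ$ by a theorem of Voevodsky--Suslin, since the obstruction groups are torsion and killed in bounded cohomological dimension). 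Then $\alpha^*$ and $\alpha_*$ are mutually inverse, and compatibility with $\otimes$ is formal since $\alpha^*$ is monoidal and an equivalence.

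For part~(3), the functor $\Phi^{\et}$ sending an \'etale sheaf on $\Sm/k$ to its stalk $\cF(\bar k)$ at the generic geometric point is the composite of restriction to the small \'etale site of $k$ with the fact that, with $\bZ/n$-coefficients and $n$ prime to $p$, rigidity (Suslin--Voevodsky) identifies $\DMeffknzet$ with the derived category of discrete $G_k$-modules: every \'etale motivic sheaf with $\bZ/n$-coefficients is \emph{locally constant}, i.e. its restriction to any $\Spec L$ for $L/k$ separable is determined by the $G_L$-action on the geometric stalk. The key steps are: (a) recall the rigidity theorem giving $\bZ/n(q)_{\et}\simeq \mu_n^{\otimes q}$ on $Et(k)$, hence $\DMeffknzet$ is generated by (shifts and twists of) the constant sheaf and is equivalent to the derived category $D(Et(k),\bZ/n)=\Dknz$ via $\Phi^{\et}$; (b) check $\Phi^{\et}$ is exact and monoidal (clear, since taking geometric stalks is exact and commutes with tensor products of \'etale sheaves); (c) verify it is fully faithful and essentially surjective on generators, which reduces to the statement that $\hom_{\DMeffknzet}(\bZ/n(q)[i],\bZ/n(q')[i'])$ is computed by Galois cohomology $H^{i'-i}(G_k,\mu_n^{\otimes(q'-q)})$, matching the corresponding Hom in $\Dknz$. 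The main obstacle across all three parts is part~(3): one must be careful that the rigidity equivalence is compatible with the tensor structure and with the specific functor $\Phi^{\et}$ described (stalk at $\bar k$), rather than merely asserting an abstract equivalence $\DMeffknzet\simeq \Dknz$; I would expect the proof to invoke \cite{SVBlochKato} (rigidity) or the analogous statement in \cite{Ke} for the non-smooth-friendly setup, and then identify the comparison functor explicitly.
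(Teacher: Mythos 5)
The paper's own proof is a two-line citation: all three parts are Voevodsky's Propositions 3.3.2 and 3.3.3 of \cite{V1} in the bounded-above setting, and the only new content is the remark that the same arguments extend to unbounded complexes --- which is where the hypothesis $\cd(k)<\infty$ actually does its work. Your proposal instead tries to reprove Voevodsky's statements. For (1) and (3) your sketches follow the same circle of ideas as his proofs: full faithfulness of $\alpha_*$ plus vanishing of positive-degree Galois cohomology with uniquely divisible coefficients for (1), and rigidity/local constancy of homotopy-invariant {\'e}tale sheaves with transfers of $\bZ/n$-modules for (3). These are acceptable as citations-with-commentary, though you never address the bounded-above versus unbounded issue, which is the one point the paper adds beyond the citation.

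Part (2) has a genuine gap. You argue that $\bZ/p(q)_{\et}\simeq 0$ for $q\geq 1$ and then conclude that the category collapses because it is ``generated by Tate twists of the unit.'' The effective category $\DMeffkRet$ is generated by the motives $M(X)$ of all smooth $X$, not by Tate twists of the unit (that would be the subcategory of Tate motives), and the vanishing of $\bZ/p(1)_{\et}$ says nothing about the weight-zero part: in particular it does not show that the unit $\bZ/p(0)$ vanishes, and your parenthetical about ``inverting the Tate object {\'e}tale-locally'' is not available in the effective setting. The correct mechanism (Voevodsky's) is different: one shows that every homotopy-invariant {\'e}tale sheaf with transfers killed by $p$ is zero (because Frobenius/purely inseparable correspondences induce isomorphisms of {\'e}tale sheaves while composing with their transposes gives multiplication by a power of $p$), and that every object of $\DMeffkRet$ has homotopy-invariant cohomology sheaves. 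Note that the constant sheaf $\bZ/p$ is nonzero as an {\'e}tale sheaf and only dies after $\bA^1$-localization, precisely because {\'e}tale cohomology with $\bZ/p$-coefficients is not homotopy invariant in characteristic $p$ (Artin--Schreier). As written, your argument for (2) does not establish the vanishing of the unit, hence not of the category.
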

\begin{proof}
In the bounded above case, this is due to Voevodsky (\cite[Prop. 3.3.2 and 3.3.3]{V1}).
The general case is shown by the same argument.
\end{proof}

In the appendix we show that the functors $\Phi^{\et}, \alpha^*, M,$ and $M^c$ are all compatible in the way one would expect:

\begin{prop}\label{prop3.etreal}
Suppose that $k$ is algebraically closed. For $X\in \Sch / k$ with structural morphism $\pi:X\to \Spec(k)$, there are canonical functorial isomorphisms
\begin{align*}
\rHom(\Phi^{\et}\alpha^* M(X), \nz) &= R\pi_{*}(\nz)_X, \qaq \\
\rHom(\Phi^{\et}\alpha^* M^c(X)), \nz) &= R\pi_{!}(\nz)_X.
\end{align*}
\end{prop}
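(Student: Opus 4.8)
The plan is to reduce the statement to the known compatibility between Voevodsky's étale realisation and the classical étale-cohomological operations, via the equivalence $\Phi^{\et}$ of Proposition~\ref{prop.etreal}(3). First I would treat the case $C = M(X)$. Recall that $M(X)$ is by definition the image in $\DMeffknz$ of the Nisnevich sheaf with transfers $\bZ_{tr}(X)$, so $\alpha^*M(X)$ is the associated étale sheaf with transfers, and $\Phi^{\et}\alpha^*M(X)$ is the complex of Galois modules obtained by evaluating at $\bar k = k$ (we are assuming $k$ algebraically closed, so there is no Galois action and $\Dknz$ is just $D(\nz)$). Since $\Phi^{\et}$ is a $\otimes$-triangulated equivalence, it suffices to produce the asserted isomorphisms after applying $\Phi^{\et}$, i.e.\ to identify $\rHom(\Phi^{\et}\alpha^*M(X),\nz)$ with $R\pi_*(\nz)_X$ in $D(\nz)$. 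The key input is that $\Phi^{\et}$ intertwines the pushforward $M_{\et}$ with the étale homotopy type / étale chains functor; more precisely, for $X$ smooth one has by construction of $\DMeffknzet$ and étale descent that $\Phi^{\et}\alpha^*M(X)$ computes the étale homology chains of $X$, whose $\nz$-linear dual is $R\Gamma_{\et}(X,\nz) = R\pi_*(\nz)_X$. This is essentially the content of the Suslin--Voevodsky rigidity comparison together with \cite[Prop.~3.3.2, 3.3.3]{V1}, so for smooth $X$ the statement is either in the literature or a short formal consequence of Prop.~\ref{prop.etreal}(3).

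The real work is the extension to singular $X$, and this is what I expect to be the main obstacle. The strategy is \emph{descent}. Pick a smooth (proper $l$dh- or cdh-) hypercover $\cX_\bullet \to X$ by smooth schemes, as in Equation~\eqref{smoothcdhReplacement}; then $M(X) \cong \operatorname{hocolim} M(\cX_\bullet)$ in $\DMeffknz$ by the descent results of \cite{Ke} (or \cite{V1} under resolution of singularities). Applying the triangulated functor $\rHom(\Phi^{\et}\alpha^*(-),\nz)$ turns this homotopy colimit into a homotopy limit, i.e.\ into $R\!\varprojlim_{\Delta} R\pi_{\bullet *}(\nz)_{\cX_\bullet}$ using the smooth case termwise; on the other hand, étale cohomological descent (proper or cdh/$l$dh descent for $R\pi_*(\nz)$, which holds since $n$ is invertible) identifies this totalization with $R\pi_*(\nz)_X$. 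One must check that the comparison maps are the canonical ones and are compatible across the simplicial levels, which is the usual bookkeeping with hypercovers; the genuinely delicate point is ensuring that the étale realisation functor really does send the motivic descent triangles (blow-up/abstract blow-up triangles of \cite{Ke}) to the étale descent triangles — this is where one uses that $\alpha^*$ is a triangulated $\otimes$-functor and that $\DMeffknzet$ satisfies $l$dh (hence cdh) descent by Prop.~\ref{prop.etreal}(3) together with rigidity.

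For the compact-support statement, the plan is parallel but uses the localisation / abstract blow-up presentation of $M^c$. Choose a compactification $X \hookrightarrow \Xb$ with closed complement $Z = \Xb \setminus X$; the localisation triangle $M^c(Z) \to M^c(\Xb) \to M^c(X) \to M^c(Z)[1]$ in $\DMeffknz$ (\cite[Thm.~5.5.14]{Ke}, cf.\ \cite[4.1.5]{V1}) is matched, after applying $\rHom(\Phi^{\et}\alpha^*(-),\nz)$, with the open-closed triangle $j_! j^* \to \id \to i_* i^*$ computing $R\pi_!$ relative to $\Xb$. Since $\Xb$ is proper, $R\pi_! = R\pi_*$ there and the first part of the Proposition (applied to $\Xb$ and $Z$, or inductively to their smooth hypercovers) handles the endpoints; the five lemma then gives the isomorphism for $X$. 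Again the substantive check is that étale realisation sends the motivic localisation triangle to the classical $j_!$-triangle compatibly — but this is exactly the kind of compatibility one expects and which the appendix is devoted to verifying, so the body of the argument is the reduction just described, with the hypercover/localisation bookkeeping being the only real labour.
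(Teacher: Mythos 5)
Your strategy is correct in outline, but it takes a genuinely different and longer route than the paper for the non-compact-support case, and the point you defer as ``bookkeeping'' is in fact the substantive one. The paper does not reduce singular $X$ to smooth hypercovers at all: the Suslin--Voevodsky comparison (Theorem~\ref{SVrigidity}, i.e.\ \cite[Thm.~7.6]{SV}) is already a statement about \emph{arbitrary} $X \in \Sch / k$, because it is formulated via the $\qfh$-topology. Concretely, one chooses an injective resolution $\nz \to \cI^\ast$ in $\Shv_{\qfh}(\Sch / k)$; then $\rHom(\Phi^{\et}\alpha^* M(X), \nz)$ is represented by $\hom(C_\ast([X]), \nz)$ (the terms of $C_\ast([X])(k)$ being free, hence projective), $R\pi_*(\nz)_X$ is represented by $\cI^\ast(X)$ (by étale acyclicity of the $\cI^i$), and the zig-zag of quasi-isomorphisms \eqref{qiAug}, \eqref{qiSuslin}, \eqref{qiEtale} built from $\epsilon$, $\Gamma(k,-)$ and $\eta$ connects the two directly. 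The compact-support case is the same zig-zag applied to the two-term complex $[Z] \to [\overline{X}]$, as in your last paragraph. What your descent route buys is that you only need the smooth case of rigidity as input; what it costs is that you must \emph{construct} a canonical, functorial comparison map between $\rHom(\Phi^{\et}\alpha^* M(X), \nz)$ and $R\pi_*(\nz)_X$ for singular $X$ before you can check it is an isomorphism by hypercover descent. Your phrase ``one must check that the comparison maps are the canonical ones'' presupposes such a map exists; producing it (compatibly with all the simplicial face maps, and independently of the choice of hypercover) is precisely what the explicit $\qfh$-resolution accomplishes in one stroke. If you want to keep the descent route, you should either define the transformation by cdh/$l$dh-sheafifying the smooth-case comparison, or adopt the paper's resolution $\cI^\ast$ anyway --- at which point the hypercover step becomes redundant.
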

\medbreak

By Propositions~\ref{prop.etreal} and \ref{prop3.etreal}, we get the following.

\begin{prop}\label{prop4.etreal}
For $X\in \Sch / k$ and $i, r \in \Z$, there are natural isomorphisms
\begin{equation*}\label{etcyclemap1}
\begin{aligned}
&\MHnzet {M(X)(r)} i  \cong \Hnztet X i {r},\\
&\MHnzet {M^c(X)(r)} i \cong \Hcnztet X i {r}.\\
\end{aligned}
\end{equation*}
where 
\begin{equation}\label{etmothomX}
\begin{aligned}
&\Hnztet X i r =\Hom_{\Dknz}(R\pi_{*}(\nz)_X,\mu_n^{\otimes r}[-i]),\\
&\Hcnztet X i r =\Hom_{\Dknz}(R\pi_{!}(\nz)_X,\mu_n^{\otimes r}[-i]).
\end{aligned}
\end{equation}
called the \emph{\'etale homology} and \emph{\'etale homology with compact support} of $X$ with $\nz$-coefficients  respectively. %
\end{prop}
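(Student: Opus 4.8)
The plan is to compute $\MHnzet{M(X)(r)}{i}$ through a chain of canonical isomorphisms: first transport the problem from $\DMeffknz$ to $\DMeffknzet$ by the adjunction $(\alpha^*,\alpha_*)$, then to $\Dknz$ by the equivalence $\Phi^{\et}$ of Proposition~\ref{prop.etreal}(3), and finally rewrite the resulting Hom-group by duality together with Proposition~\ref{prop3.etreal}; the $M^c$ case is completely parallel. Concretely, I would use the adjunction $(\alpha^*,\alpha_*)$ and the fact that $\alpha^*$ is monoidal, so $\alpha^*(\nz[i])=\nz[i]$, to get
\[
\MHnzet{M(X)(r)}{i} = \hom_{\DMeffknz}(\nz[i],\alpha_*\alpha^*(M(X)(r))) \cong \hom_{\DMeffknzet}(\nz[i],\alpha^*(M(X)(r))).
\]
Applying the $\otimes$-equivalence $\Phi^{\et}$, which carries the unit to the constant sheaf $\nz$ on $Et(k)$, turns this into $\hom_{\Dknz}(\nz[i],\Phi^{\et}\alpha^*(M(X)(r)))$. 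Since both $\alpha^*$ and $\Phi^{\et}$ are monoidal, $\Phi^{\et}\alpha^*(M(X)(r))\cong\Phi^{\et}\alpha^*M(X)\otimes\Phi^{\et}\alpha^*\nz(r)$, and one identifies $\Phi^{\et}\alpha^*\nz(r)\cong\mu_n^{\otimes r}$ via the standard fact, coming from the Kummer sequence, that $\nz(1)\cong\mu_n$ in $\DMeffknzet$.

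The last step is duality. As $X$ is of finite type, $M(X)$ (resp.\ $M^c(X)$) is a compact object of $\DMeffknz$ and $\nz$-linear geometric motives are rigid, so $E:=\Phi^{\et}\alpha^*M(X)$ is a dualizable object of $\Dknz$ (its image is a perfect complex of $\nz$-sheaves on $Et(k)$). Its dual $E^\vee=\rHom(E,\nz)$ is identified, by Proposition~\ref{prop3.etreal}, with $R\pi_*(\nz)_X$, where $\pi\colon X\to\Spec(k)$ is the structure map. Hence $E\otimes\mu_n^{\otimes r}\cong\rHom(E^\vee,\mu_n^{\otimes r})\cong\rHom(R\pi_*(\nz)_X,\mu_n^{\otimes r})$, and by the $\otimes$--$\rHom$ adjunction
\[
\hom_{\Dknz}(\nz[i],\Phi^{\et}\alpha^*(M(X)(r)))\cong\hom_{\Dknz}(R\pi_*(\nz)_X[i],\mu_n^{\otimes r}) = \Hnztet{X}{i}{r}.
\]
Replacing $M$ by $M^c$ and using the second isomorphism of Proposition~\ref{prop3.etreal}, $\rHom(\Phi^{\et}\alpha^*M^c(X),\nz)\cong R\pi_!(\nz)_X$, yields the compactly supported version. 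Naturality in $X$ follows step by step from the naturality of the unit of $(\alpha^*,\alpha_*)$, of the equivalence $\Phi^{\et}$, and of the isomorphisms of Proposition~\ref{prop3.etreal}.

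I expect the only delicate point to be the verification that $E=\Phi^{\et}\alpha^*M(X)$ is genuinely dualizable, i.e.\ a perfect complex, so that $E\otimes\mu_n^{\otimes r}\cong\rHom(E^\vee,\mu_n^{\otimes r})$ and $(E^\vee)^\vee\cong E$. This rests on the rigidity of $\nz$-linear geometric motives — hence, in positive characteristic, on Gabber's alterations, in keeping with the paper's standing hypotheses — and, on the {\'e}tale side, on the finiteness of the {\'e}tale cohomology of finite-type $k$-schemes ensured by the finite cohomological dimension of $k$. The remaining manipulations are purely formal, being a sequence of adjunctions and applications of the monoidal equivalence $\Phi^{\et}$.
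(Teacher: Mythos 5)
Your route---the adjunction $(\alpha^*,\alpha_*)$, the equivalence $\Phi^{\et}$ of Proposition~\ref{prop.etreal}(3), the identification $\nz(1)\cong\mu_n$, and then duality via Proposition~\ref{prop3.etreal}---is exactly the argument the paper intends; its entire proof is the sentence that the statement follows from Propositions~\ref{prop.etreal} and~\ref{prop3.etreal}.

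The one point to correct is precisely the ``delicate point'' you flag, where your justification is off. The object $E=\Phi^{\et}\alpha^*M(X)$ is in general \emph{not} a perfect (tensor-dualizable) complex of $\nz$-modules: its cohomology modules are finite, but a finite $\nz$-module such as $\Z/p$ over $\Z/p^2$ has infinite projective dimension when $n$ is not squarefree, so rigidity/perfectness is the wrong input and the claim ``its image is a perfect complex'' can fail. What your argument actually needs is only (a) $\rHom(E^\vee,\mu_n^{\otimes r})\cong\rHom(E^\vee,\nz)\otimes\mu_n^{\otimes r}$, which holds because $\mu_n^{\otimes r}$ is an invertible object, and (b) the biduality $E\iso\rHom(\rHom(E,\nz),\nz)$. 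The latter holds not because $E$ is perfect but because $\nz$ is self-injective (it is a dualizing complex on $\Spec(k)$), so $\rHom(-,\nz)$ is exact and Matlis--Verdier biduality holds for bounded complexes with finite cohomology modules---which $E$ has by rigidity and the finiteness of \'etale cohomology. With that substitution your chain of isomorphisms, and the naturality argument, go through verbatim. A minor additional remark: Proposition~\ref{prop3.etreal} is stated only for $k$ algebraically closed, so for the general case you should invoke its Galois-equivariant form, which is what $\Phi^{\et}$ (defined by evaluation at $\bar k$ together with its Galois action) produces.
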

\medbreak

We now ``convert'' \'etale homology into cohomology:

\begin{lem}\label{lem.ethom}
Let $X\in \Sch/k$ and let $n$ be a positive integer invertible in $k$.
\begin{itemize}
\item[(1)]
If $X$ is smooth over $k$ of pure dimension $d$, there are canonical isomorphisms
\[
\begin{aligned}
& \Hnztet X i r \cong H^{2d-i}_c(X_{\et},\mu_n^{\otimes d+r}),\\
& \Hcnztet X i r \cong H^{2d-i}(X_{\et},\mu_n^{\otimes d+r}).\\
\end{aligned}
\]
\item[(2)]
If $k$ is algebraically closed there are canonical isomorphisms 
\begin{equation*} \label{etaleHomology}
%\label{etmothomX}
\begin{aligned}
&\Hnztet X i r \cong H^{i}(X_{\et},\mu_n^{\otimes -r})^*,\\
&\Hcnztet X i r\cong H^{i}_c(X_{\et},\mu_n^{\otimes -r})^*. \\
\end{aligned}
\end{equation*}
\item[(3)]
If $k$ is finite there are canonical isomorphisms 
\begin{equation*}
%\label{etmothomX}
\begin{aligned}
&\Hnztet X i r \cong H^{i+1}(X_{\et},\mu_n^{\otimes -r})^*,\\
&\Hcnztet X i r\cong H^{i+1}_c(X_{\et},\mu_n^{\otimes -r})^*.\\
\end{aligned}
\end{equation*}
\end{itemize}
\end{lem}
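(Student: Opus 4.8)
The plan is to deduce Lemma~\ref{lem.ethom} from the definitions \eqref{etmothomX} by systematically unravelling the right adjoints $R\pi_*$ and $R\pi_!$ into ordinary \'etale cohomology, using Poincar\'e duality in the smooth case and the structure of $D(k_{\et},-)$ in the cases where $k$ is a point.

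\textbf{Part (1).} Here I would use that for $X$ smooth of pure dimension $d$ over $k$, the relative Poincar\'e duality isomorphism (the purity isomorphism $R\pi^! \nz \cong \nz(d)[2d]$ on $X_{\et}$, valid since $n$ is invertible in $k$) identifies $R\pi_!$ with $R\pi_*$ up to the Tate twist and shift: concretely, $R\pi_! \mathcal{F} \cong R\pi_* \rHom(\mathcal{F}, \nz(d)[2d])^{\vee}$-type duality, or more cleanly, for the constant sheaf one has the two adjunction computations $\Hom_{\Dknz}(R\pi_* \nz_X, \mu_n^{\otimes j}[m]) \cong \Hom(\nz_X, \pi^!(\mu_n^{\otimes j}[m])) \cong \Hom(\nz_X, \mu_n^{\otimes j+d}[m+2d]) = H^{m+2d}(X_{\et}, \mu_n^{\otimes j+d})$, and dually $\Hom_{\Dknz}(R\pi_! \nz_X, \mu_n^{\otimes j}[m]) \cong \Hom(\pi^* \mu_n^{\otimes j}[m], \nz_X)^{\vee}$... here I would instead directly write $\Hom_{\Dknz}(R\pi_! \nz_X, \mu_n^{\otimes j}[m])$ and use that $R\pi_!$ is left adjoint-compatible via $R\pi_! \dashv \pi^!$, giving $\Hom(\nz_X, \pi^!(\mu_n^{\otimes j}[m]))$... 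The cleanest route: substitute $i \mapsto 2d-i$, $r \mapsto d+r$ into the claimed formulas and check that both sides compute $\Hom$ in $D(X_{\et})$ between $\nz_X$ and a twist-shift of $\nz_X$, using purity $\pi^! \nz \cong \mu_n^{\otimes d}[2d]$ to move between $R\pi_!$ and $R\pi_*$. I would spell out the two adjunction chains and cite SGA4 purity.

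\textbf{Parts (2) and (3).} For $k$ algebraically closed (resp. finite), the point is that $D(k_{\et}, \lnz)$ is the derived category of $\nz$-modules (resp. of continuous $\zh$-modules, with $\cd(k)=1$), so $\Hom_{D(k_{\et})}(K, \nz[m])$ can be computed by the universal coefficient / duality spectral sequence. When $k = \bar k$ is algebraically closed, $D(k_{\et},\lnz) \simeq D(\nz\text{-mod})$ and $\Hom_{D(\nz\text{-mod})}(K, \nz[m]) = H^{-m}(R\Hom(K,\nz)) $; since $\nz$ is injective over itself, this is just $H^{-m}(K)^* = H^{m}(K^{\vee})$, applied with $K = R\pi_*\nz_X$ (whose cohomology is $H^*(X_{\et},\nz)$) and the twist $\mu_n^{\otimes -r}$ carried along, yielding $H^i(X_{\et}, \mu_n^{\otimes -r})^*$; and with $K = R\pi_! \nz_X$ one gets $H^i_c(X_{\et},\mu_n^{\otimes -r})^*$. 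When $k$ is finite, $\cd(k) = 1$ and $\Hom_{D(k_{\et})}(\nz, \nz[m])$ is $\nz$ for $m = 0, 1$ and $0$ otherwise, so the local-to-global spectral sequence (or the exact triangle expressing $R\Gamma(k_{\et},-)$ via invariants and coinvariants of Frobenius) degenerates into a short exact sequence
\[
0 \to H^1(k_{\et}, \mathcal{H}^{i-1}) \to \mathbb{H}^i(k_{\et}, K) \to H^0(k_{\et}, \mathcal{H}^i) \to 0;
\]
pairing this against the constant sheaf and using that $H^1(k_{\et},\nz) \cong \nz$ and the resulting duality $\Hom_{D(k_{\et})}(\mathcal{H}, \nz[1]) \cong \mathcal{H}^*$ shifts the degree by one, producing the extra $+1$ in $H^{i+1}(X_{\et},\mu_n^{\otimes -r})^*$. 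I would run this for both $R\pi_*$ (giving $\Hnztet X i r$) and $R\pi_!$ (giving $\Hcnztet X i r$).

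\textbf{Main obstacle.} The routine-looking bookkeeping of Tate twists and shifts across the three adjunctions ($\pi^* \dashv R\pi_*$, $R\pi_! \dashv \pi^!$) combined with purity is where sign/index errors creep in; I expect the genuinely delicate point to be case (3), namely justifying the degeneration of the descent spectral sequence over a finite field into the clean degree-shift-by-one duality statement, i.e.\ correctly accounting for the fact that $R\Gamma(k_{\et}, -)$ on a torsion complex is governed by Frobenius-invariants in degree $0$ and Frobenius-coinvariants in degree $1$, and that the latter pairs perfectly against the former to give a shift by $1$ rather than by $2$. Everything else is a matter of chasing the constant-sheaf computations $H^*(X_{\et},\mu_n^{\otimes \ast})$ through the formulas \eqref{etmothomX} and invoking SGA4 Poincar\'e duality and the finiteness of \'etale cohomology.
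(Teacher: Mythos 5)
Your proposal is correct and follows essentially the same route as the paper: for (1) purity $R\pi^!\nz\cong\mu_n^{\otimes d}[2d]$ combined with Poincar\'e--Verdier duality to exchange $R\pi_*$ and $R\pi_!$; for (2) the identification of $\Dknz$ with the derived category of $\nz$-modules together with the self-injectivity of $\nz$; and for (3) Tate duality for the Galois cohomology of a finite field, where the pairing of Frobenius-invariants against Frobenius-coinvariants produces the shift by one. One small caution: your first displayed ``adjunction'' $\Hom_{\Dknz}(R\pi_*\nz_X,-)\cong\Hom(\nz_X,\pi^!(-))$ is the adjunction for $R\pi_!$, not $R\pi_*$ --- as you in effect note afterwards, the $R\pi_*$ case is exactly the one that needs the Verdier-duality (biduality) isomorphism $\uHom_{\Dknz}(R\pi_*(\nz)_X,\nz)\cong R\pi_!R\pi^!\nz$ rather than a bare adjunction.
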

\begin{proof}
By duality, we have natural isomorphisms
\begin{equation*}
%\label{etmothomX}
\begin{aligned}
&\uHom_{\Dknz}(R\pi_{*}(\nz)_X,\nz)\cong R\pi_{!}R\pi^!\nz,\\
&\uHom_{\Dknz}(R\pi_{!}(\nz)_X,\nz)\cong R\pi_{*}R\pi^!\nz.\\
\end{aligned}
\end{equation*}
where $\uHom_{\Dknz}$ denotes the inner $\Hom$ in $\Dknz$.
For $X\in \Sm / k$ of pure dimension $d$, we have $R\pi^!\nz=\mu_n^{\otimes d}[2d]$ and (1) and (2) follow from this.
If $k$ is finite, (3) follows from the Tate duality for Galois cohomology of finite fields.
\end{proof}
\bigskip

\subsection{Proof of Theorem~\ref{etcycl-propersmooth}} \label{pf51}

%Now we prove Theorem \ref{etcycl-propersmooth}.
Let $X$ be smooth of pure dimension $d$ over a perfect field $k$.  It is shown in \cite{Iv2} that the following diagram is commutative:
\[
\xymatrix{
\MHnz {M^c(X)(r)} i \ar[r]^{\alpha^*} \ar[d]_{\simeq}^{(*1)} & \MHnzet {M^c(X)(r)} i  \ar[d]_{\simeq}^{(*2)}\\
\CH_{-r}(X,i+2r;\nz) \ar[d]^{=} &  H^{2d-i}(X_\et,\mu_n^{\otimes d+r})^* \\
\CH^{d+r}(X,i+2r;\nz) \ar[ru]^{cl_X},\\
}\]
where $cl_X$ is the \'etale cycle map defined by Geisser-Levine \cite{GL}, and 
$(*1)$ comes from Equation~\eqref{MHhigherChow} and $(*2)$ from Equation~\eqref{etcyclemap1} and Lemma~\ref{lem.ethom}(1).

Similarly, when $k$ is algebraically closed, in \cite{Ke2} it is shown that (for $r \geq 0$) this same diagram is commutative but where now $cl_X$ is Suslin's isomorphism from \cite{Su} (cf. \cite[Thm. 5.6.1]{Ke}).

Hence Theorem~\ref{etcycl-propersmooth} follows from the Theorem~\ref{etcyclhigherChow} below.

\begin{theo}\label{etcyclhigherChow}
Under the assumption of Theorem \ref{etcycl-propersmooth}, the Geisser-Levine cycle map
\[
cl^{d+r,q}_X: \CH^{d+r}(X,q;\nz) \to H^{2(d+r)-q}(X_\et,\nz(d+r))^*
\]
are isomorphisms for all $q\in \bZ$.
\end{theo}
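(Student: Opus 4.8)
The plan is to reduce the statement to known cases via the Beilinson--Lichtenbaum conjecture. Recall that the Geisser--Levine cycle map $cl^{d+r,q}_X$ is, on the level of complexes, induced by a morphism of motivic complexes $\bZ/n(d+r) \to \tau_{\le d+r} R\epsilon_* \mu_n^{\otimes (d+r)}$ (where $\epsilon$ is the change-of-site map from the \'etale to the Nisnevich site). The Beilinson--Lichtenbaum conjecture, which by Geisser--Levine \cite[Thm.~1.5]{GL} follows from the Bloch--Kato conjecture proved by Rost--Voevodsky, asserts precisely that this morphism is a quasi-isomorphism in degrees $\le d+r+1$. Since $X$ is smooth of dimension $d$, the motivic cohomology sheaf $\bZ/n(d+r)$ has no cohomology above degree $d+r$ (the Nisnevich cohomological dimension being bounded by the Krull dimension), so the induced map on hypercohomology
\[
\CH^{d+r}(X,q;\nz) = H^{2(d+r)-q}_{\mathrm{Nis}}(X,\bZ/n(d+r)) \to H^{2(d+r)-q}_{\et}(X,\mu_n^{\otimes(d+r)})
\]
is an isomorphism for $2(d+r)-q < d+r$, i.e.\ for $q > d+r$, and injective for $q = d+r$; more care with the truncation at the top degree handles the remaining range.

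Next I would dualize. The target $H^{2(d+r)-q}(X_\et,\nz(d+r))^*$ is the $\nz$-linear dual of \'etale cohomology with coefficients $\mu_n^{\otimes (d+r)}$, so what must be checked is that the (untwisted, i.e.\ genuinely \'etale) cohomology of $X$ satisfies Poincar\'e--Pontryagin-type duality identifying $H^{2(d+r)-q}_{\et}(X,\mu_n^{\otimes(d+r)})$ with $H^{2(d+r)-q}_{\et}(X,\mu_n^{\otimes(d+r)})$ --- which is a tautology --- but combined with the comparison of the \emph{Nisnevich} and \emph{\'etale} realizations above. The substantive input is therefore: (a) Beilinson--Lichtenbaum to pass from $H^*_{\mathrm{Nis}}(X,\bZ/n(m))$ to $H^*_\et(X,\mu_n^{\otimes m})$ in the relevant degrees, and (b) the observation that in each of the three cases $(i),(ii),(iii)$ of Theorem~\ref{etcycl-propersmooth} the residual \'etale-cohomological contributions that obstruct (a) from being an isomorphism in \emph{all} degrees actually vanish.

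Concretely, in case $(i)$ ($k$ algebraically closed), $X$ has \'etale cohomological dimension $\le 2d$ with $\mu_n$-coefficients, and Beilinson--Lichtenbaum gives an isomorphism for $q > d+r$ and injectivity for $q = d+r$, while a separate weight/cohomological-dimension argument --- surjectivity at the edge via the fact that $H^{d+r+1}_\et(X,\mu_n^{\otimes(d+r)})$ does not contribute to the Nisnevich side --- closes the gap. In cases $(ii)$ and $(iii)$ ($k$ finite), one uses the Hochschild--Serre spectral sequence for $\overline{k}/k$, which has only two rows; the twist $r>0$ in $(ii)$ (resp.\ the properness together with $r=0$, $i\ne -1$ in $(iii)$) forces the geometric cohomology $H^*(X_{\overline k},\mu_n^{\otimes(d+r)})$ to be concentrated appropriately (using that for $r>0$ the relevant weights are nonzero so Galois invariants/coinvariants vanish in the bad degree, and for $(iii)$ the excluded degree $i=-1$ is exactly where the Frobenius-fixed/cofixed parts could interfere). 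I expect the main obstacle to be the careful bookkeeping at the boundary degree --- establishing surjectivity (not just injectivity) of $cl^{d+r,q}_X$ at $q=d+r$ and tracking precisely which Galois (co)invariant terms survive in the finite-field cases --- rather than any deep new idea; the deep inputs (Bloch--Kato, Geisser--Levine, the duality of Proposition~\ref{prop4.etreal} and Lemma~\ref{lem.ethom}) are already available.
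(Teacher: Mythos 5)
There is a genuine gap here. Applying Beilinson--Lichtenbaum directly to $X$ only controls the comparison map in \'etale degrees $\le d+r$ (plus injectivity one degree higher), i.e.\ roughly for $q\ge d+r$. The theorem is asserted for \emph{all} $q$, and the substantive cases lie far outside that range: for $r=0$ and $q=0$ the map is $\CH_0(X;\nz)\to H^{2d}(X_\et,\mu_n^{\otimes d})$, in \'etale degree $2d$, where both sides are nonzero and Beilinson--Lichtenbaum applied to $X$ says nothing. What you defer to ``more care with the truncation at the top degree'' and a ``weight/cohomological-dimension argument'' is not a single boundary degree but a gap of about $d+r$ degrees, and no mechanism is offered to bridge it. (The paragraph on dualizing is also vacuous as written: it ends by comparing $H^{2(d+r)-q}_{\et}(X,\mu_n^{\otimes(d+r)})$ with itself.)

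The missing idea --- and the way the paper argues, following \cite[Lem.~6.2]{JS} --- is to localize via the niveau spectral sequences
\[
{^{CH}E}^1_{a,b}=\bigoplus_{x\in X_a}\CH^{a+r}(x,a+b;\nz)
\qquad\text{and}\qquad
{^{\et}E}^1_{a,b}=\bigoplus_{x\in X_a}H^{a-b+2r}_{\et}(x,\nz(a+r)),
\]
converging to the two sides of the cycle map. On $E^1$-terms one compares motivic and \'etale cohomology of the residue \emph{fields} $\kappa(x)$, which over algebraically closed $k$ satisfy $\cd(\kappa(x))=a$ for $x\in X_a$. For $b\ge r$ the map is an isomorphism by Beilinson--Lichtenbaum; for $b<r$ both sides vanish --- the Chow side because $\CH^{i}(L,j;\nz)=0$ for $i>j$, the \'etale side because the cohomological degree $a-b+2r$ then exceeds $a$. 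Together these cover all $(a,b)$, hence all $q$. The same bookkeeping over a finite field (where $\cd(\kappa(x))=a+1$) is precisely what forces $r>0$ in case $(ii)$ and properness with $i\ne -1$ in case $(iii)$; the paper simply cites \cite{JS} and \cite{KeS} for those cases. Your Hochschild--Serre sketch for $(ii)$ and $(iii)$ inherits the same unaddressed gap in the degrees below $d+r$.
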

\begin{proof}
The case $(ii)$ (resp. $(iii)$) is shown in \cite[Lem.6.2]{JS} (resp. \cite[Thm. (9.3]{KeS}).
The case $(i)$ is a consequence of \cite{Su} and \cite[Theo. 5.6.1]{Ke}. 
Here we give an alternative proof of $(i)$ following the proof of \cite[Lem.6.2]{JS}.
It is simpler but depends on the Beilinson-Lichtenbaum conjecture
due to Suslin-Voevodsky \cite{SVBlochKato} and Geisser-Levine \cite[Thm.1.5]{GL} 
which relies on the Bloch-Kato conjecture proved by Rost-Voevodsky.

By the localization theorem for higher Chow groups (\cite{B} and \cite{L}),
we have the niveau spectral sequence
$$
{^{CH}E}^1_{a,b}= \underset{x\in X_a}{\bigoplus} \CH^{a+r}(x,a+b;\nz)
\Rightarrow \CH^{d+r}(X,a+b;\nz).
$$
By the purity for \'etale cohomology, we have the niveau spectral sequence
$$
{^{\et}E}^1_{a,b}= \underset{x\in X_a}{\bigoplus} H^{a-b+2r}_{\et}(x,\nz(a+r))
\Rightarrow H^{2(d+r)-a-b}_{\et}(X,\nz(d+r)).
$$
The cyle map $cl^{d+r,a+b}_X$ preserves the induced filtrations and induces maps on $E^\infty_{a,b}$ compatible with
the map on $E^1$-terms induced by the cycle maps for $x\in X_a$:
$$
cl_x^{a+r,a+b}\;:\; \CH^{a+r}(x,a+b;\nz) \to H^{a-b+2r}_{\et}(x,\nz(a+r)).
$$
By the Beilinson-Lichtenbaum conjecture, $cl_x^{a+r,a+b}$ is an isomorphism if $b\geq r$.
On the other hand, we have $ {^{CH}E}^1_{a,b}=0$ for $b<r$ since $\CH^{i}(L,j;\nz)=0$ for a field $L$ and integers $i>j$, and we have $^{\et}E^1_{a,b}=0$ for $b < 2r$ since $cd(\kappa(x))=a$ for $x\in X_a$ by the assumption that 
$k$ is algebraically closed.
Noting $r\geq 0$, these imply that the cycle maps induce an isomorphism of the spectral sequences and hence
the desired isomorphism of Theorem \ref{etcyclhigherChow}.
\end{proof}

%%%%%%%%%%%%%%%%%%%%%%%%%%%%%%%%%
%%% \input{mhf-effmotiveVer2}
%%%%%%%%%%%%%%%%%%%%%%%%%%%%%%%%%

% !TEX root = mhf-main.tex

\section{Reduced motives and co-{\'e}tale motives} \label{effmotive}

In case $k$ is finite, the weight homology functor in Definition \ref{def.weighthomology} is compared 
with a different homology functor which is defined as the homology of the ``co-{\'e}tale'' part of motives
(see Theorem \ref{thm.wHTheta}(2)).
%the ``constants'' part of the motive.
To introduce this, we need some preliminaries. In this section we still work over a perfect field $k$.

\subsection{A triangulated lemma}

We will apply the following lemma twice; in Equation~\eqref{defi:theta} and Equation~\eqref{defi:red}.

\begin{lem}\label{lem.tr} 
Suppose that $B: \cT \to \cS$ is a fully faithful exact functor of triangulated categories which admits a left adjoint $A: \cS \to \cT$. Then there is a triangulated endofunctor
\[C:\cS \to \cS\]
with natural transformations $c: C \to id_{\cS}$ and $d: BA \to C[1]$ such that for any object $X\in \cS$,
 \[ C(X) \by{c_X} X \by{\eta_X} BA(X) \by{d_X} C(X)[1] \]
is a distinguished triangle of $\cS$, where $\eta_X$ is the unit map of the adjunction.
%for each $X\in \cS$, there is , which fits in an exact triangle
If small direct sums are representable in $\cS,\cT$ and $A,B$ commute with them, so does $C$.
\end{lem}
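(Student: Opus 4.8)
The natural candidate for $C(X)$ is the cone of the unit map $\eta_X : X \to BA(X)$, so the plan is to define $C$ via a functorial cone and then check that all the required structure is available. First I would recall that since $B$ is fully faithful, the counit $\varepsilon : AB \to \mathrm{id}_{\cT}$ is an isomorphism; this is the standard characterization of fully faithful right (here: the roles are $B$ right adjoint... actually $B$ has a \emph{left} adjoint $A$, so $B$ fully faithful $\iff$ the unit $\mathrm{id}_\cT \to$ \dots) — in any case, fully faithfulness of $B$ together with the adjunction $(A,B)$ gives that one of the unit/counit is invertible, and this is what makes $BA$ behave like a (co)localization. The key structural input is that $\eta_X : X \to BA(X)$ is, for each $X$, the unit of a (co)localization triangle, so its cone is well-behaved.

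The technical engine is a standard fact about triangulated categories: if $N : \cD \to \cD$ is a triangulated endofunctor equipped with a natural transformation $\nu : \mathrm{id} \to N$ such that $N(\nu_X) = \nu_{N(X)}$ is an isomorphism for all $X$ (equivalently, $\nu$ makes $N$ a Bousfield colocalization), then the objectwise cone $C(X) := \mathrm{Cone}(\nu_X)$ underlies a triangulated functor $C$, fitting into a natural distinguished triangle $C(X) \to X \xrightarrow{\nu_X} N(X) \to C(X)[1]$, with the connecting maps assembling into natural transformations $c : C \to \mathrm{id}$ and $d : N \to C[1]$. Here $N = BA$ and $\nu = \eta$. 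So the plan is: (i) verify $BA$ is a triangulated endofunctor (it is a composite of exact functors) and that $\eta$ satisfies $BA(\eta_X)$ is an isomorphism — this follows from the triangle identity $\varepsilon_{A(X)} \circ A(\eta_X) = \mathrm{id}$ together with $\varepsilon$ being an isomorphism (from $B$ fully faithful), which forces $A(\eta_X)$ and hence $BA(\eta_X)$ to be isomorphisms; (ii) invoke the cone-functoriality lemma to get $C$, $c$, $d$ and the distinguished triangle; (iii) observe that exactness of $C$ is inherited because a cofiber sequence of exact functors is exact, and naturality of $c, d$ is part of the package. For the last sentence, if $A$ and $B$ commute with small sums then so does $BA$, hence so does $\eta$ componentwise across a sum, and since the cone functor commutes with sums (sums of distinguished triangles are distinguished in a category with countable sums, by Bökstedt–Neeman), $C$ commutes with small sums.

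The main obstacle I expect is making the objectwise cone genuinely \emph{functorial and triangulated} — in a plain triangulated category cones are not functorial, so one cannot just say "$C(X) = \mathrm{Cone}(\eta_X)$" and be done. The honest way to handle this is to use the colocalization structure: because $BA(\eta_X)$ is an isomorphism, the triangle $C(X) \to X \to BA(X) \to C(X)[1]$ exhibits $X \mapsto C(X)$ and $X \mapsto BA(X)$ as the two halves of a semiorthogonal/Bousfield decomposition, and in that situation both projections \emph{are} canonically triangulated functors with canonical natural transformations — this is where one would cite the relevant statement (e.g.\ the formalism of Bousfield (co)localization on triangulated categories, or Neeman's book). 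Alternatively, if the paper has dg-enhancements available (as it does, via Bondarko), one can define $C$ as a genuine dg/stable-$\infty$-categorical cofiber and transport back, sidestepping the functoriality issue entirely; I would mention this as the clean route. Everything else — exactness of $C$, the naturality of $c$ and $d$, and compatibility with sums — is then formal once the colocalization picture is set up.
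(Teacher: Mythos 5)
Your proposal is correct and follows essentially the same route as the paper: define $C(X)$ as a cone of $\eta_X$, observe that full faithfulness of $B$ makes the counit invertible so that $A(\eta_X)$ is a split isomorphism, and use the resulting orthogonality $\Hom_{\cS}(C(X),BA(Y))=0$ (a Bousfield-colocalization argument) to make the cone unique, functorial, and exact. The paper's proof is just a compressed version of exactly this argument, so the functoriality issue you flag is handled there in the same way you propose.
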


\begin{proof} Take $X\in \cS$ and choose an object $C\in \cS$ fitting in a distinguished triangle
 \[C \to X\by{\eta_X} BA(X)\by{+1}\]
 We want to show that $C$ is determined by $X$ uniquely up to unique isomorphism.
For this it suffices to see that, for any $Y\in \cS$, one has
$\Hom_{\cS}(C,BA(Y))=0$.
By adjunction, we are reduced to showing that $A(C)=0$, equivalently the map $A(\eta_X):A(X)\to ABA(X)$ is an isomorphism.
This holds since $A(\eta_X)$ is split by the map $\epsilon_{A(X)}:ABA(X)\to A(X)$, 
where $\epsilon$ is the counit of the adjunction, and the latter is an isomorphism by the full faithfulness of $B$. 
The last statement is also obvious. 
\end{proof}

The first application of Lemma~\ref{lem.tr} is to the adjunction
\[ (\alpha^*, \alpha_*): \DMeffkR \rightleftarrows \DMeffkRet, \]
we get an endomorphism
\begin{equation} \label{defi:theta}
\Theta: \DMeffkR \to \DMeffkR \ ;\qquad  C \to \Theta(C)
\end{equation}
equipped with natural transformations $\alpha_*\alpha^*[-1] \to \Theta \to \id $ such that 
\begin{equation} \label{Thetadt}
 \Theta (C) \to C \to  \alpha_*\alpha^* C \by{+1}
\end{equation}
is a distinguished triangle for every $C\in \DMeffkR$. 
One could call $\Theta (C)$ the \emph{co-{\'e}tale part} of $C$ since $\hom_{\DMeffkR}(\Theta(C), C') = 0$ for all $C'$ in the image of $\alpha_*:\DMeffkRet\to \DMeffkR $. Notice that a consequence of Proposition \ref{prop.etreal}(1) and (2) is that $\Theta(C)$ is necessarily torsion prime to $\ch(k)$:

\begin{lem}\label{lemm:Thetator}
For every $C \in \DMeffkR$ the canonical morphism
\begin{equation}\label{Thetator}
\underset{n}{\text{hocolim}} \uHom(\Z/n,\Theta(C))\iso \Theta(C),
\end{equation}
is an isomorphism where $\uHom$ is the inner $\Hom$ in $\DMeffkR$ and $n$ ranges over all positive integers invertible in $k$.
\end{lem}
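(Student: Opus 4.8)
The plan is to deduce the lemma from the fact --- recorded just after the distinguished triangle \eqref{Thetadt} --- that $\Theta(C)$ is torsion of order prime to $p=\chara(k)$, by converting that statement into the asserted isomorphism through a homotopy colimit computation.

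First I would record a comparison triangle valid for an arbitrary $M\in\DMeffkR$. Let $S\subseteq\bQ$ be the localization of $\Z$ obtained by inverting every positive integer invertible in $k$ (so $S=\bQ$ if $\chara(k)=0$ and $S=\Z[\tfrac{1}{\ell}:\ell\neq p]$ if $\chara(k)=p$). For $n$ invertible in $k$, applying the internal $\uHom(-,M)$ to the distinguished triangle $R\rmapo{n}R\to\Z/n\rmapo{+1}$ defining $\Z/n$ yields a distinguished triangle
\[ \uHom(\Z/n,M)\ \rmapo{c_M^n}\ M\ \rmapo{n}\ M\ \rmapo{+1} , \]
in which $c_M^n$ is induced by the structural map $R\to\Z/n$; in particular $\uHom(\Z/n,M)=\operatorname{fib}(M\rmapo{n}M)$. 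Ordering the $n$ by divisibility, the quotient maps $\Z/N\to\Z/n$ (for $n\mid N$) make these into a filtered system whose transition maps are the identity on the middle term and multiplication by $N/n$ on the last term. Since filtered homotopy colimits commute with finite limits, hence with $\uHom(\Z/n,-)=\operatorname{fib}(-\rmapo{n}-)$, and with $-\otimes_\Z-$, passing to the colimit produces a distinguished triangle
\[ \underset{n}{\text{hocolim}}\ \uHom(\Z/n,M)\ \rmapo{c_M}\ M\ \to\ M\otimes_\Z S\ \rmapo{+1} , \]
where the identification of the cofiber with $M\otimes_\Z S$ is because the colimit over $n$ of the multiplication maps $M\rmapo{N/n}M$ computes the localization; and $c_M$ is precisely the canonical morphism of the statement.

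Second, I would invoke the torsion property of $\Theta(C)$. Applying $\alpha^*$ to \eqref{Thetadt} and using the counit isomorphism $\alpha^*\alpha_*\iso\id$ of Lemma~\ref{alphafullyfaith}, the triangle identity shows that $\alpha^*C\to\alpha^*\alpha_*\alpha^*C$ is an isomorphism, forcing $\alpha^*\Theta(C)=0$. Combining this with Proposition~\ref{prop.etreal}(1) (which makes $\alpha^*$ an equivalence after $\otimes\bQ$, so $\Theta(C)\otimes\bQ=0$) and Proposition~\ref{prop.etreal}(2) (which in characteristic $p$ annihilates $p$-primary coefficients, and is vacuous when $p$ is already invertible in $R$, the case used in this paper) shows that $\Theta(C)$ is torsion of order invertible in $k$; equivalently $\Theta(C)\otimes_\Z S=0$. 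Applying the comparison triangle with $M=\Theta(C)$ then shows that $c_{\Theta(C)}$ is an isomorphism, which is the claim.

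The only step carrying genuine content is the vanishing $\Theta(C)\otimes_\Z S=0$, i.e.\ the absence of a $p$-primary part in $\Theta(C)$: when $p$ is invertible in $R$ this is immediate from Proposition~\ref{prop.etreal}(1), and in general it is exactly where Proposition~\ref{prop.etreal}(2) is needed. Everything else is bookkeeping with homotopy colimits and the cofiber sequence defining $\Z/n$.
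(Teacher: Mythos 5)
Your argument is correct and is essentially the paper's own: the paper offers no written proof beyond the preceding remark that Propositions~\ref{prop.etreal}(1) and (2) force $\Theta(C)$ to be torsion prime to $\ch(k)$, and your fracture triangle $\underset{n}{\text{hocolim}}\,\uHom(\Z/n,M)\to M\to M\otimes_{\Z}S\to{+1}$ together with the vanishing $\alpha^*\Theta(C)=0$ (via the triangle identity and Lemma~\ref{alphafullyfaith}) is exactly that remark made precise. One caveat on your parenthetical about Proposition~\ref{prop.etreal}(2): for general $R$ it only kills the $p$-primary part of $\alpha_*\alpha^*C$, whence $\uHom(\Z/p^r,\Theta(C))\cong\uHom(\Z/p^r,C)$, which is nonzero already for $R=\Z$ and $C=M(\Spec(k))$; so the lemma genuinely requires the paper's standing hypothesis that $p$ be invertible in $R$, under which --- as you correctly note --- the $p$-primary part is vacuously zero and Proposition~\ref{prop.etreal}(1) carries all the content.
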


Note also that 
\begin{equation}\label{Thetator2}
\Theta(\uHom(\Z/n,C))\simeq \uHom(\Z/n,\Theta(C)).
\end{equation}
\medbreak

In view of \eqref{Thetadt} Theorem \ref{mainthm.etreal} is a consequence of the following.

\begin{theo}\label{thm.wHTheta} 
Let $C\in \DMeffkR$. Suppose either $R = \Lambda = \ZZ / n$, or $R = \bZ[1/p]$ and $\Lambda = \bQ/\bZ[1/p]$.
\begin{itemize}
\item[(1)] 
If $k$ is algebraically closed, $\MHR {\Theta(C))} i=0$ for $i\in \bZ$.
\item[(2)] 
If $k$ is finite, there are canonical isomorphisms for $C\in \DMeffgmkR$: 
\[
\MHR {\Theta(C))} i \isom \WHL C {i+1}\qfor i\in \bZ,
\] 
which are natural in $C$.
\end{itemize}
\end{theo}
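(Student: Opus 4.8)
The plan is to run everything off the defining triangle \eqref{Thetadt}, $\Theta(C)\to C\to\alpha_*\alpha^*C\to\Theta(C)[1]$: applying $\hom_{\DMeffkR}(R[i],-)$ turns it into a long exact sequence
\[\cdots\to\MHR{\Theta(C)}{i}\to\MHR{C}{i}\rmapo{\alpha^*}\MHRet{C}{i}\to\MHR{\Theta(C)}{i-1}\to\cdots,\]
so that ``$\MHR{\Theta(C)}{i}=0$ for all $i$'' is \emph{equivalent} to ``$\alpha^*$ is an isomorphism on motivic homology'', and in general $\MHR{\Theta(C)}{\bullet}$ records the failure of $\alpha^*$. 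For part (1) I would then argue as follows. The condition that $\alpha^*\colon\MHR{C}{\bullet}\to\MHRet{C}{\bullet}$ be an isomorphism is stable under distinguished triangles and retracts in $C$, and both functors commute with homotopy colimits ($R[i]$ is compact and $\alpha^*$ preserves compact objects, so $\alpha_*$ commutes with sums; the case $R=\zpi$ also reduces to $R=\nz$ by Lemma~\ref{lemm:Thetator}). Since every object of $\DMeffkR$ is a homotopy colimit of objects of $\DMeffgmkR$, and $\DMeffgmkR$ is the thick subcategory generated by the $M(X)=M^c(X)$ with $X$ smooth projective, it suffices to treat $C=M^c(X)(r)$ with $X$ smooth and $r\ge 0$; for $k$ algebraically closed this is exactly Theorem~\ref{etcycl-propersmooth}(i) (together with Proposition~\ref{prop4.etreal} and Lemma~\ref{lem.ethom}).

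For part (2) I would first isolate the ``field of constants''. Applying Lemma~\ref{lem.tr} to the inclusion $d_{\le 0}\DMeffkR\hookrightarrow\DMeffkR$ and its left adjoint $L\pi_0$ produces a triangulated endofunctor $C\mapsto C_{red}$ and a distinguished triangle $C_{red}\to C\to L\pi_0(C)\to C_{red}[1]$; applying the triangulated functor $\Theta$ and then $\MHR{\cdot}{\bullet}$ yields a long exact sequence relating $\MHR{\Theta(C_{red})}{\bullet}$, $\MHR{\Theta(C)}{\bullet}$ and $\MHR{\Theta(L\pi_0(C))}{\bullet}$. The decisive point is the vanishing $\MHR{\Theta(C_{red})}{i}=0$ for all $i$ when $k$ is finite, which gives $\MHR{\Theta(C)}{i}\iso\MHR{\Theta(L\pi_0(C))}{i}$. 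Since $(-)_{red}$ is triangulated, $C_{red}$ lies in the thick subcategory generated by $\{M(X)_{red}\colon X\text{ smooth projective}\}$, and as $C\mapsto\MHR{\Theta(C)}{i}$ is homological it is enough to treat $C_{red}=M(X)_{red}$, $X$ smooth projective. By the first paragraph this amounts to showing $\alpha^*$ is an isomorphism on $\MHR{M(X)_{red}}{\bullet}$. Writing $\kappa_X$ for the field of constants, there is a triangle $M(X)_{red}\to M(X)\to M(\Spec\kappa_X)\to M(X)_{red}[1]$ (and in fact a $\zpi$-linear splitting), so the claim reduces degreewise to the statement that $\alpha^*$ is an isomorphism on $M(X)$ and on $M(\Spec\kappa_X)$; for $i\ne -1$ this is Theorem~\ref{etcycl-propersmooth}(iii) (both $X$ and $\Spec\kappa_X$ are smooth and proper, $r=0$), while for $i=-1$ one checks directly that $\MHR{M(X)_{red}}{-1}=0$ (Suslin homology of a smooth projective variety vanishes in negative degrees and the degree map $\MHR{M(X)}{0}\to\MHR{M(\Spec\kappa_X)}{0}$ is onto) and that $\MHRet{M(X)_{red}}{-1}=0$ (the map $\MHRet{M(\Spec\kappa_X)}{-1}\to\MHRet{M(X)}{-1}$ is an isomorphism, both computing $H^0(X_\et,R)^*$ by Lemma~\ref{lem.ethom}(3)).

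It remains to identify $\MHR{\Theta(L\pi_0(C))}{i}$ with weight homology for $C\in\DMeffgmkR$. Since $L\pi_0(C)\in d_{\le 0}\DMeffkR$, I would use \cite{abv} to describe $d_{\le 0}\DMeffkR$ and the restrictions of $\alpha^*$, $\alpha_*$ (hence of $\Theta$) to it explicitly in terms of complexes of $G_k$-modules, with $G_k=\wh{\bZ}$ over a finite field; a short Galois-cohomology computation then shows that $\MHR{\Theta(M(\Spec\kappa))}{\bullet}$ is concentrated in a single degree $d_0$, where it equals $\Gamma_\Lambda(\Spec\kappa)=\Lambda^{c(\Spec\kappa)}$ for every finite \'etale $k$-algebra $\kappa$, compatibly with pushforward. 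Consequently the homological functor $C\mapsto\MHR{\Theta(L\pi_0(C))}{d_0}$ on $\DMeffgmkR$ is $\SP$-acyclic and restricts on $\CHMp$ to $\Gamma_\Lambda$, so by Theorem~\ref{equivalenceHomologicalFunctors} it coincides with $\wH{-}{\Lambda}$; since $\Theta$ and $L\pi_0$ commute with shifts, re-indexing yields the asserted natural isomorphism $\MHR{\Theta(L\pi_0(C))}{i}\iso\WHL{C}{i+1}$ for all $i$. Combined with the reduction of the previous paragraph this proves (2), and reinserting the triangle \eqref{Thetadt} recovers Theorem~\ref{mainthm.etreal}.

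The main obstacle is the vanishing $\MHR{\Theta(C_{red})}{i}=0$ over a finite field --- the ``surprising'' assertion that a reduced motive carries no co-\'etale motivic homology. After the dévissage it rests on the case $r=0$ of Theorem~\ref{etcycl-propersmooth}, i.e. on Theorem~\ref{etcyclhigherChow}(ii)--(iii) (through the Geisser--Levine cycle map), together with the hands-on analysis of the boundary degree $i=-1$ sketched above; by comparison the explicit description of $d_{\le 0}\DMeffkR$ needed in the last step is routine.
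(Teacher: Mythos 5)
Your proposal follows essentially the same route as the paper: reduction to $C=M(X)$ with $X$ smooth projective via Lemma~\ref{lemm:Thetator} and density of smooth projective motives, the key vanishing of $\MHR {\Theta(C_{\red})} i$ over a finite field proved via Theorem~\ref{etcycl-propersmooth} together with a separate analysis in degree $i=-1$, and the identification of $\MHR {\Theta(L\pi_0(C))} i$ with $\WHL C {i+1}$ via Theorem~\ref{equivalenceHomologicalFunctors} after a Galois-cohomology computation for $M(\Spec(L))$ (these are exactly the paper's Claims~\ref{claim1.etreal} and~\ref{claim2.etreal}). The only blemish is a reversed arrow in your degree $-1$ check (the maps go $M(X)\to M(\Spec \kappa_X)$, i.e.\ $\MHRet {M(X)} {-1}\to\MHRet {M(\Spec\kappa_X)} {-1}$, not the other way), which does not affect the argument.
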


\subsection{Reduced motives} \label{reduced motives}
For $r\ge 0$, let $d_{\le r}\DMeffkR$ be the localizing subcategory of $\DMeffkR$ generated by motives of smooth varieties of dimension $\le r$ (cf. \cite[\S 3.4]{V1}). We have a natural equivalence of categories
\begin{equation*}\label{d0DMA}
d_{\le 0}\DMeffkR \cong D(\cA_R),
\end{equation*}
where 
%$ D(\cA_R)$ is the derived category of 
$\cA_R$  is the category of additive contravariant functors on the category of permutation $R[G_k]$-modules 
\cite[Prop. 3.4.1]{V1}. 
%This implies 
%\begin{lem}\label{alphafullyfaith}
%$\alpha_*$ restricted on $d_{\le 0}\DMeffkR$ is fully faithful.
%\end{lem}
\medbreak

In \cite{abv}, Ayoub and Barbieri-Viale construct the left adjoint 
\begin{equation}\label{Lpi0}
L\pi_0: \DMeffkR \to d_{\le 0}\DMeffkR
\end{equation}
to the inclusion functor $d_{\le 0} \DMeffkR\to \DMeffkR$. It satisfies
\begin{equation}\label{pi0M(X)}
L\pi_0(M(X)) = M(\pi_0(X)) \qfor X\in \Sm,
\end{equation}
where $\pi_0(X)$ is the ``scheme of constants" of $X$, i.e.
\[
\pi_0(X) =\underset{i\in I}{\coprod} \;\Spec(L_i),
\]
where $\{X_i\}_{i\in I}$ are the connected components of $X$ and $L_i$ is the algebraic closure of $k$ in the function
field of $X_i$. 
%Viewing $M(\pi_0(X))$ as a $G_k$-set, it corresponds to $\Z[\pi_0(X)]$ in $D(\cA)$. 
The functor $L\pi_0$ commutes with small direct sums. By Lemma \ref{lem.tr} there is a triangulated endofunctor
\begin{equation} \label{defi:red}
(-)_{\red} : \DMeffkR \to \DMeffkR \ ;\qquad  C \to C_\red
\end{equation}
with natural transformations $L\pi_0[-1] \to (-)_\red \stackrel{\iota}{\to} \id$ such that 
\begin{equation} \label{redTri}
C_\red \rmapo \iota C \to  L\pi_0(C)\by{+1}
\end{equation}
is a distinguished triangle for every $C\in \DMeffkR$. 

\begin{defn} \label{defi:reduced}
We call $C_\red$ the \emph{reduced part} of $C$.
We may say that $C$ is \emph{reduced} if the canonical morphism $C_\red \to C$ is an isomorphism, or equivalently if $L\pi_0C = 0$.
%Reccall that $p=\ch(k)$ is assumed invertible in $R$.
\end{defn}

\subsection{Proof of Theorem \ref{thm.wHTheta}}

We start with the following:

\begin{claim}\label{claim1.etreal} 
Let $C\in \DMeffkR$.
\begin{itemize}
\item[(1)]
If $k$ is algebraically closed, we have $\MHR {\Theta(C)} i =0$ for $i\in \Z$.
%\begin{equation*}
%\Hom_{\DMeffkR}(\Z,\Theta(C))=0\;\;\text{ for $C\in \DMeffkR$}.
%\end{equation*}
\item[(2)]
If $k$ is finite, we have $\MHR {\Theta(C_{\red})} i =0$ for $i\in \Z$.
%\begin{equation*}
%\Hom_{\DMeffkR}(\Z,\Theta(C_{\red}))=0\;\;\text{ for $C\in \DMeffkR$}.
%\end{equation*}
\end{itemize}
\end{claim}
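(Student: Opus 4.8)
The claim asserts vanishing of motivic homology of the co-étale part $\Theta(C)$ in two cases: (1) over an algebraically closed field, $\MHR{\Theta(C)}i = 0$; (2) over a finite field, $\MHR{\Theta(C_\red)}i = 0$. The overarching strategy is to reduce everything to the case of torsion coefficients $R = \ZZ/n$ (using Lemma~\ref{lemm:Thetator}, which says $\Theta(C)$ is a homotopy colimit over $n$ of its $\ZZ/n$-pieces, and the fact that $\MHR-i$ commutes with this hocolim since $\Z[i]$ is compact), and then to compute $\MHR-i$ via the distinguished triangle \eqref{Thetadt} defining $\Theta$, namely $\Theta(C) \to C \to \alpha_*\alpha^*C \xrightarrow{+1}$. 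Applying $\hom_{\DMeffkR}(R[i],-)$ to this triangle, the vanishing $\MHR{\Theta(C)}i = 0$ for all $i$ is \emph{equivalent} to $\alpha^*\colon \MHR Ci \to \MHRet Ci$ being an isomorphism for all $i$. So the two parts of the claim are restatements of facts about the étale comparison map.

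\emph{Part (1).} Over an algebraically closed field I would first reduce to $R = \ZZ/n$ via Lemma~\ref{lemm:Thetator}; note Proposition~\ref{prop.etreal}(1),(2) already handle the $\Q$-linear and $p$-primary parts trivially (the former because $\alpha^*$ is an equivalence, the latter because the étale category vanishes), so only the prime-to-$p$ torsion part is at issue. For $R = \ZZ/n$ one wants $\alpha^*\colon \MHnz Ci \to \MHnzet Ci$ to be an isomorphism for all $C \in \DMeffkR$ and all $i$. Since both functors are homological and commute with sums, and since $\DMeffkR$ is generated by the $M^c(X)(r)$ for $X$ smooth (by duality/Gysin, or directly), it suffices to check this on a generating set. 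For $C = M^c(X)(r)$ with $X$ smooth and $r \geq 0$, this is exactly Theorem~\ref{etcycl-propersmooth}(i), proved in Section~\ref{etcyclemap}. The bookkeeping to pass from a generating family to all of $\DMeffkR$ (and to handle negative twists, shifts, cones, and infinite sums) is the routine part; the real content is Theorem~\ref{etcyclhigherChow}(i), i.e.\ the Geisser--Levine cycle map being an isomorphism, which rests on Beilinson--Lichtenbaum.

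\emph{Part (2).} Over a finite field the naive comparison map is \emph{not} an isomorphism, which is why one passes to the reduced part $C_\red$. Here I would use the key structural input: over a finite field, the difference between motivic and étale motivic homology is concentrated in the ``field of constants.'' Concretely, from the triangle \eqref{redTri}, $C_\red \to C \to L\pi_0(C) \xrightarrow{+1}$, and the fact that $\Theta$ is triangulated, one gets a triangle $\Theta(C_\red) \to \Theta(C) \to \Theta(L\pi_0 C) \xrightarrow{+1}$. Thus $\MHR{\Theta(C_\red)}i = 0$ for all $i$ is equivalent to $\Theta(C) \to \Theta(L\pi_0 C)$ inducing an isomorphism on $\MHR-i$ — heuristically, all the discrepancy lives in $d_{\le 0}\DMeffkR$. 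To prove the vanishing directly I would again reduce to $R = \ZZ/n$ via Lemma~\ref{lemm:Thetator}, then reduce to generators: since $(-)_\red$ and $\Theta$ are triangulated and commute with sums, and $\DMeffkR$ is generated by motives $M(X)$ (equivalently $M^c(X)(r)$) of smooth $X$, it suffices to treat $C = M(X)$ for $X$ smooth. For such $X$ one has $L\pi_0 M(X) = M(\pi_0 X)$ with $\pi_0 X$ a disjoint union of spectra of finite extensions of $k$ \eqref{pi0M(X)}; the reduced part $M(X)_\red$ thus ``kills the constants.'' The required vanishing $\MHnz{\Theta(M(X)_\red)}i = 0$ then amounts to showing that the cokernel and kernel of $\alpha^*\colon H^S_i(X,\ZZ/n) \to H^{i+1}(X_\et,\ZZ/n)^*$ agree with those for $\pi_0(X)$ — i.e.\ the comparison map is an isomorphism ``relative to the field of constants.'' This I expect to follow by combining Theorem~\ref{etcycl-propersmooth}(ii),(iii) (which handle the twisted and proper cases over a finite field) with the Gysin/localization triangles reducing a general smooth $X$ to smooth proper pieces and lower-dimensional strata, together with an induction on dimension; the base case $\dim = 0$ is precisely the statement that $\Theta$ annihilates $d_{\le 0}$-objects after passing to $(-)_\red$.

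\emph{Main obstacle.} The hard part of Part~(2) is the reduction from arbitrary smooth $X$ to the cases already covered (proper, or positively twisted), because over a finite field a smooth non-proper variety need not admit a smooth compactification without resolution of singularities; one must instead use Gabber's alterations and the trace/transfer argument (as in the proof of Lemma~\ref{HomFunctorProperties}) to realize $M(X)$ as a $\zpi$-linear summand of $M(X')$ for $X'$ with a smooth normal-crossings compactification, then run the normal-crossings spectral sequence of Example~\ref{WHexam1}-type to control the strata. Keeping track of which strata contribute ``constants'' (and hence are killed by $(-)_\red$) versus which are handled by Theorem~\ref{etcycl-propersmooth}(iii), and ensuring the comparison is compatible with all the localization triangles, is where the genuine work lies; everything else is formal manipulation of triangulated categories and adjunctions.
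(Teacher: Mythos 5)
Your overall architecture matches the paper's: reduce to $R=\nz$ via Lemma~\ref{lemm:Thetator} and compactness of $R[i]$, reduce to a generating family, and invoke Theorem~\ref{etcycl-propersmooth}. Part~(1) is essentially the paper's argument. For Part~(2), however, there are two problems.

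First, you choose your generators to be $M(X)$ for \emph{arbitrary} smooth $X$, which forces the compactification/alteration/strata induction you describe as the ``main obstacle.'' That obstacle is an artifact of the choice: by Gabber's refinement of de Jong (\cite[Prop.\ 5.5.3]{Ke}), the motives $M(X)$ with $X$ smooth \emph{projective} already generate $\DMeffknz$ as a localizing subcategory, and since $\Theta$, $(-)_\red$ and $\MHR{-}{i}$ are all triangulated and commute with small sums, the reduction lands you directly in the smooth proper case. The entire induction on dimension and the normal-crossings bookkeeping are unnecessary.

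Second, and more seriously, even in the smooth proper case your plan does not close. You propose to deduce the vanishing of $\MHnz{\Theta(M(X)_\red)}{i}$ from Theorem~\ref{etcycl-propersmooth}(ii),(iii), but part~(iii) explicitly excludes $i=-1$, and that exclusion is exactly where the content of Claim~\ref{claim1.etreal}(2) lies: for $X$ smooth proper over a finite field the map $\alpha^*\colon \MHnz{M(X)}{-1}\to\MHnzet{M(X)}{-1}$ is \emph{not} an isomorphism (the source is $\CH_0(X,-1;\nz)=0$ while the target is $H^{2d+1}(X_\et,\mu_n^{\otimes d})\neq 0$). One must show that this failure is exactly matched by the same failure for $L\pi_0 M(X)=M(\pi_0(X))$; the paper does this by comparing the two long exact sequences of the triangle $M(X)_\red\to M(X)\to L\pi_0 M(X)$ and identifying the map $\MHnzet{M(X)}{-1}\to\MHnzet{L\pi_0 M(X)}{-1}$ with the trace isomorphism $H^{2d+1}(X_\et,\mu_n^{\otimes d})\to H^1(\pi_0(X)_\et,\nz)$. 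Your proposal never isolates this degree, so the key step of Part~(2) is missing.
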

\begin{proof}
\emph{Reduction to $R=\nz$ and $C=M(X)$ with $X$ smooth and proper over $k$:} 
Since $M(k)$ is a compact object of $\DMeffkR$, the functor 
\[
%\Hom_{\DMeffkR}(\Z,-)
\MHR - i : \DMeffkR \to \AB
\]
commutes with infinite direct sums, hence with $\text{hocolim}_n$.
Therefore, in light of the isomorphisms
\[
\Theta(C) \iso \underset{n}{\text{hocolim}} \uHom(\Z/n,\Theta(C)) \iso 
\underset{n}{\text{hocolim}} \ \Theta(\uHom(\Z/n,C))
\]
of Lemma~\ref{Thetator} and Equation~\ref{Thetator2} we may assume $C$ is killed by some $n$ and $R=\nz$.
%In this case,
%\[\DMeffkR(\Z,\Theta(C_\red))\simeq \DMeffknz(\Z/n,\Theta(C_\red)).\]
%We need to show that the right hand side is $0$ for any such $C$; 
%We may of course limit ourselves to the case where $n$ is prime to $p$. 

\emph{Reduction to $C = M(X)$:} Since  $\alpha^*$, $\alpha_*$ and $L\pi_0$ commute with infinite direct sums, the statement of the claim is true 
for a direct sum $\bigoplus_{\alpha\in A} C_\alpha$ if it is true for all $C_\alpha$. 
Moreover, if $C'\to C\to C''\by{+1}$ is an exact triangle in $DM^\eff(k,\Z/n)$ and the statement of the claim is true for $C'$ and $C''$, then it is true for $C$.
% (see again the comments after Lemma \ref{lem.tr}). 
By Gabber's refinement of de Jong's theorem \cite{Il}, motives of smooth projective varieties are dense in $\DMeffknz$ in the sense that the smallest triangulated subcategory containing their shifts and stable under infinite direct sums is equal to $\DMeffknz$ (see \cite[Prop.5.5.3]{Ke}). Thus it suffices to show Claim \ref{claim1.etreal} 
for $R=\nz$ and $C=M(X)$, where $X$ is a smooth and proper over $k$.

\emph{Proof of Claim~\ref{claim1.etreal}(1):} Since we have reduced to the case $C=M(X)$ with $X$ smooth proper over $k$ and $R=\nz$, Claim~\ref{claim1.etreal}(1) follows from Theorem~\ref{etcycl-propersmooth}.

\emph{Proof of Claim~\ref{claim1.etreal}(2):} Now we prove the second assertion of the claim, where $k$ is finite and $C=M(X)$ with $X$ smooth proper over $k$.
We have a commutative diagram
\[
\xymatrix@C=6pt{
\cdots \ar[r] &\MHnz {M(X)_{\red}} i\ar[r] \ar[d]^{\alpha^*} &\MHnz {M(X)} i\ar[r] \ar[d]^{\alpha^*}
&\MHnz {L\pi_0 M(X)} i\ar[r] \ar[d]^{\alpha^*}&\cdots \\
\cdots \ar[r] &\MHnzet {M(X)_{\red}} i\ar[r]  &\MHnzet {M(X)} i\ar[r] 
&\MHnzet {L\pi_0 M(X)} i\ar[r] &\cdots \\
}\]
We want to show that $\alpha^*$ on the left hand side is an isomorphism.
By Theorem \ref{etcycl-propersmooth} and Equation~\eqref{pi0M(X)}, $\alpha^*$ in the middle and on the right hand side are 
isomorphisms for $i\not=-1$. By Equation~\eqref{pi0M(X)} and Equation~\eqref{MHhigherChow} we have
\[
\MHnz {M(X)} {-1} = \MHnz {L\pi_0 M(X)} {-1}=0.
\]
Hence we are reduced to showing that the map
\[
\MHnzet {M(X)} {-1} \to \MHnzet {L\pi_0 M(X)} {-1}
\]
is an isomorphism. By \eqref{etcyclemap1} and Lemma \ref{lem.ethom}, this follows from the fact that 
the trace map
\[
H^{2d+1}(X_{\et},\mu_n^{\otimes d}) \to H^{1}(\pi_0(X)_{\et},\nz)
\]
is an isomorphism. This completes the proof of Claim \ref{claim1.etreal}.
\end{proof}
\medbreak

It remains to show Theorem \ref{thm.wHTheta}(2). Assume $k$ is finite.
By definition we have a distinguished triangle
\[
\Theta(C_{\red}) \to \Theta(C)\to \Theta(L\pi_0(C)) \by {+1}.
\]
Hence Claim \ref{claim1.etreal}(2) implies 
\begin{equation} \label{piNotNec}
 \MHR {\Theta(C)} i \cong \MHR {\Theta(L\pi_0(C)))} i
 \end{equation}
  hence we are reduced to showing 
\begin{equation} \label{piNotwHomology}
\MHR {\Theta(L\pi_0(C)))} i\cong  \WHL C {i+1}.
 \end{equation}
By Theorem \ref{equivalenceHomologicalFunctors} this follows from the following.

\begin{claim}\label{claim2.etreal} 
Suppose $k$ is finite.
For $X\in \Sm$, 
\begin{equation*}\label{WHMX}
\MHR {\Theta(L\pi_0(M(X)))} i =\left\{ \begin{array}{lr}
%H^1(\Spec(k)_\et,\Lam)^{c(X)}\cong \Lam^{c(X)} & \text{ for } i=-1 \\
\Lam^{c(X)} & \text{ for } i=-1 \\
0 & \text{ for } i\ne -1 \end{array} \right.  
\end{equation*}
where $c(X)$ is the set of connected components of $X$.
\end{claim}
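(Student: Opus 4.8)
The plan is to reduce everything to a computation about the scheme of constants $\pi_0(X)$ and then to $\Galois$ cohomology of the finite field $k$. First I would use \eqref{pi0M(X)} to replace $L\pi_0(M(X))$ by $M(\pi_0(X))$, so that $\pi_0(X) = \coprod_{i\in I}\Spec(L_i)$ is a finite disjoint union of spectra of finite extensions $L_i/k$; by additivity it suffices to treat a single $\Spec(L)$ with $L/k$ finite. Then I would apply $\Theta$ to $M(\Spec L)$ and use the defining distinguished triangle \eqref{Thetadt}
\[
\Theta(M(\Spec L)) \to M(\Spec L) \to \alpha_*\alpha^* M(\Spec L) \by{+1},
\]
which upon applying $\MHR{-}{i} = \hom_{\DMeffkR}(R[i],-)$ yields a long exact sequence relating $\MHR{\Theta(M(\Spec L))}{i}$, $\MHR{M(\Spec L)}{i}$ and $\MHRet{M(\Spec L)}{i}$. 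So the claim is equivalent to computing these last two groups and the connecting maps.

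The two inputs are: (a) $\MHR{M(\Spec L)}{i} = H^S_i(\Spec L, R)$ via \eqref{MHS}, which is $R$ in degree $0$ and $0$ otherwise (Suslin homology of a point), and (b) the \'etale side. For (b) I would use Proposition~\ref{prop.etreal}(3) — the equivalence $\Phi^\et : \DMeffknzet \iso \Dknz$ — together with Proposition~\ref{prop3.etreal} / Proposition~\ref{prop4.etreal} and Lemma~\ref{lem.ethom}(3) to identify $\MHRet{M(\Spec L)}{i}$ with $H^{i+1}(\Spec L_\et, \mu_n^{\otimes 0})^\ast = H^{i+1}(\Galois_L, \nz)^\ast$ (or the $\bQ/\bZ[1/p]$-analogue when $R = \bZ[1/p]$). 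Since $L$ is finite, $\Galois_L \cong \wh\bZ$ has cohomological dimension $1$, so $H^{i+1}(\Galois_L,\nz)$ is nonzero precisely for $i+1 = 0$ and $i+1 = 1$, i.e. for $i = -1$ and $i = 0$, each isomorphic to $\nz$. Plugging (a) and (b) into the long exact sequence, the only possibly nonzero $\MHR{\Theta(M(\Spec L))}{i}$ occur at $i = -1$ and $i = 0$; at $i = 0$ the map $\alpha^* : \MHR{M(\Spec L)}{0} \to \MHRet{M(\Spec L)}{0}$ is the canonical map $R \to H^0(\Galois_L,R)^\ast \cong R$, which is an isomorphism, so the contribution at $i = 0$ cancels, leaving $\MHR{\Theta(M(\Spec L))}{-1} \cong H^1(\Galois_L,\nz)^\ast \cong \nz$ (resp. $\bQ/\bZ[1/p]$ for $R = \bZ[1/p]$, which via $\Gamma_\Lam$ is $\Lam$), and $0$ in all other degrees.

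Finally I would assemble the pieces: summing over the connected components of $\pi_0(X)$, which are in bijection with $c(X)$, gives $\MHR{\Theta(L\pi_0(M(X)))}{-1} \cong \Lam^{c(X)}$ and $0$ otherwise, which is exactly Claim~\ref{claim2.etreal}; combined with \eqref{piNotwHomology} and Theorem~\ref{equivalenceHomologicalFunctors} (noting that $X \mapsto \Lam^{c(X)}$ for $X$ smooth projective is precisely the additive functor $\Gamma_\Lam$ defining $\WHL{-}{\bullet}$) this establishes $\MHR{\Theta(L\pi_0(C))}{i} \cong \WHL{C}{i+1}$ and hence Theorem~\ref{thm.wHTheta}(2). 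The main obstacle I anticipate is the bookkeeping around the case $R = \bZ[1/p]$, $\Lambda = \bQ/\bZ[1/p]$: there one must be careful that the \'etale side is computed with torsion coefficients while the ``co-\'etale'' functor $\Theta$ already forces torsion (Lemma~\ref{lemm:Thetator}), so the shift between $\nz$ and $\bQ/\bZ[1/p]$ appears exactly via the colimit over $n$ and the degree-$(-1)$ term, and one needs the identification of the connecting map $\MHRet{M(\Spec L)}{0} \to \MHR{\Theta(M(\Spec L))}{-1}[1]$ with the trace/coboundary in $\Galois_L$-cohomology — this is the analogue of the trace-map argument used at the end of the proof of Claim~\ref{claim1.etreal}(2).
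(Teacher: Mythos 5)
Your skeleton (reduce to $\Spec(L)$ via \eqref{pi0M(X)}, run the long exact sequence of a $\Theta$-triangle, and quote Theorem~\ref{etcycl-propersmooth} plus Tate duality for $G_L$) is the same as the paper's, and the external inputs you cite are the right ones. But there is a genuine gap at exactly the step you flag as the ``main obstacle'': you apply the triangle \eqref{Thetadt} directly to $M(\Spec L)$ with $R$-coefficients, whereas the paper first replaces $M(\Spec L)$ by its derived torsion. Concretely, the paper uses Lemma~\ref{lemm:Thetator} and \eqref{Thetator2} to get $\Theta(M(\Spec L))\cong\Theta(M(\Spec L)_{\tor})$, computes $M(\Spec L)_{\tor}\cong \Lam_L[-1]$, and applies the distinguished triangle \eqref{Thetadt} to $\Lam_L[-1]$ --- not to $M(\Spec L)$. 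This one move does two things your argument cannot do without: it converts the coefficients from $R$ to $\Lam$, and it introduces the shift $[-1]$ that places the answer in degree $-1$.

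The coefficient conversion is not optional when $R=\bZ[1/p]$. Propositions~\ref{prop.etreal}(3), \ref{prop4.etreal} and Lemma~\ref{lem.ethom} are statements about $\nz$-coefficients, and there is no ``$\bQ/\bZ[1/p]$-analogue'' of the identification of $\MHRet{M(\Spec L)}{i}$ with $H^{i+1}(G_L,\Lam)^\ast$ for the $\bZ[1/p]$-linear motive $M(\Spec L)$ itself: one has $\hom_{\DMeffkRet}(R[i],M(\Spec L))\cong H^{-i}(G_L,\bZ[1/p])$, which is $\bZ[1/p]$ for $i=0$, vanishes for $i=-1$ (since $H^1(G_L,\bZ[1/p])=\Hom_{\mathrm{cont}}(\widehat{\bZ},\bZ[1/p])=0$), and the group $\bQ/\bZ[1/p]$ only appears for $i=-2$ as $H^2(G_L,\bZ[1/p])$. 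So your claimed concentration of the \'etale side in degrees $0,-1$ fails, and the cancellation argument breaks down; the group $\Lam$ enters only after the hocolim/Bockstein manipulation, i.e.\ only after passing to $\Lam_L[-1]$. Relatedly, even for $R=\nz$ you must track where the cokernel of $\alpha^\ast_{-1}$ lands: the connecting map of \eqref{Thetadt} sends $\MHRet{C}{i}$ to $\MHR{\Theta(C)}{i-1}$, so without the extra $[-1]$ supplied by $M(\Spec L)_{\tor}\cong\Lam_L[-1]$ the nonvanishing group does not come out in the degree asserted by the Claim. The missing idea is thus precisely the reduction $\Theta(C)\cong\Theta(C_{\tor})$ together with the explicit computation of $M(\Spec L)_{\tor}$.
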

\begin{proof}
Writing  
\begin{equation}\label{HCTR}
\HCTR C i = \MHR {\Theta(L\pi_0(C)))} i\qfor C\in \DMeffgmkR,
\end{equation}
this gives a homological functor on $\DMeffgmkR$. Note that by Equations~\eqref{Thetator} and \eqref{Thetator2},
we have for $C\in \DMeffkR$,
\begin{equation}\label{Thetator3}
\Theta(C) =\Theta(C_{\tor}) \qwith C_{\tor} = \underset{n}{\text{hocolim}} \uHom(\Z/n,C).
\end{equation}
We may assume $X$ is geometrically connected over a field $L$ finite over $k$. By Equation~\eqref{pi0M(X)}, we have
$L\pi_0(M(X)) =M(\Spec(L))$. We easily see $ M(\Spec(L))_{\tor} = \Lam_L[-1]$, where
%\[\Lam_L = M(\Spec(L))\otimes \Lam.  \]
\[\Lam_L = \left\{ \begin{array}{lr}
M(\Spec(L))\otimes \Q/\Z[1/p] & \text{ if } R=\bZ[1/p], \\
M(\Spec(L)) & \text{ if } R=\nz. \end{array} \right.  \]

Hence we get a distinguished triangle
\[
\Theta(L\pi_0(M(X))) \to \Lam_L[-1]\to \alpha_*\alpha^* \Lam_L[-1] \by{+1}
\]
which induces a long exact sequence
\begin{align*}
 \cdots\to \HCTR {M(X)} {i+1} \to \MHR {\Spec(L)} {i} \rmapo{\alpha^*} \MHRet {\Spec(L)} {i} \\
\to \HCTR {M(X)} i  \to \cdots.
\end{align*}
By Theorem \ref{etcycl-propersmooth}, $\alpha^*$ is an isomorphism for $i\ne -1$.
For $i=-1$, we have $\MHR {\Spec(L)} {-1}=0$ while by Lemma~\ref{lem.ethom} we have 
\[
\MHRet {\Spec(L)} {-1} \cong H^1(\Spec(k)_{\et},\Lambda)\cong \Lam,
\]
where the second isomorphism is the trace map in the Tate duality for Galois cohomology of finite fields. 
This completes the proof of Claim \ref{claim2.etreal}.
\end{proof}

\appendix

%%%%%%%%%%%%%%%%%%%%%%%%%%%%%%%%%
%%% \input{mhf-appEtReal}
%%%%%%%%%%%%%%%%%%%%%%%%%%%%%%%%%

\section{{\'E}tale realisations of non-smooth motives} \label{etalerealisationnonsmooth}

We now show the compatibility of the functors 
\[ \alpha^*: \DMeffkR \to \DMeffkRet \]
\[ \Phi^{\et}: \DMeffknzet \to \Dknz \]
\begin{align*} 
M: \Sch / k \to &\DMeffkR \ ;  &X \mapsto M(X)  \\
M^c: \Sch^{\mathrm{prop}} / k \to &\DMeffkR \ ;  &X \mapsto M^c(X) 
\end{align*}
from Equation~\eqref{alphaUpStar}, Proposition~\ref{prop.etreal}, and Equations~\eqref{nonsmoothmotives} and \eqref{motiveCompact} respectively. 

We begin by recalling results of Suslin-Voevodsky that we will use. Let \mbox{$\nz \to \cI^\ast$} be an injective resolution of the constant $\qfh$-sheaf associated to $\nz$ in $\Shv_{\qfh}(\Sch / k)$. Recall that every $\qfh$-sheaf has a canonical structure of transfers (Yoneda's Lemma together with \cite[Thm. 4.2.12(1)]{SVRelCyc}). So the two categories $\Shv_{\qfh}(\Ck)$ and $\Shv_{\qfh}(\Sch / k)$ are canonically equivalent. 

For a scheme $X \in \Sch / k$ let $[X]$ denote its corresponding object in $\Ck$ as well as the presheaf with transfers on $\Ck$ that it represents. The two natural transformations
\[ \eta: \id \to \underline{C}_\ast(-), \qquad \qquad \epsilon: \nz \to \cI^\ast \]
in $\Comp(\Shv_{\qfh}(\Ck))$ together with the global sections functor
\[ \Gamma(k, -): \Shv_{\qfh}(\Ck) \to \AB \]
induce canonical natural transformations
\begin{align}
\hom(C_\ast([X]), \nz) \stackrel{\epsilon}{\to} &\hom(C_\ast([X]), \cI^\ast(k)) \label{qiAug} \\  
\hom(\underline{C}_\ast([X]), \cI^\ast) \stackrel{\Gamma(k, -)}{\to} &\hom(C_\ast([X]), \cI^\ast(k))  \label{qiSuslin} \\
\hom(\underline{C}_\ast([X]), \cI^\ast) \stackrel{\eta}{\to} &\hom([X], \cI^\ast) \cong \cI^\ast(X) \label{qiEtale}
\end{align}
of complexes of abelian groups.

\begin{theo}[Suslin-Voevodsky] \label{SVrigidity}
Suppose that the field $k$ is algebraically closed, $n$ is invertible in $k$, and $ X\in \Sch / k$. The complex $\cI^\ast$ and the sheaves $\cI^i$ are all {\'e}tale acyclic. That is, one has
\[ a_{\et}(\underline{H}^i(\cI^\ast)(-)) = 0 \textrm{ and } H^i_{\et}(-, \cI^j) = 0 \qquad \textrm{ for all } i \neq 0, j \geq 0. \]

Moreover, the two morphisms \eqref{qiAug}, \eqref{qiSuslin}, and \eqref{qiEtale} of complexes of abelian groups are all quasi-isomorphisms.

In fact, if $[X]$ is replaced by a bounded complex in $\Comp^b(\Ck)$ via the obvious use of the total complex functor, they remain quasi-isomorphisms.
\end{theo}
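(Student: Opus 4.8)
The statement collects the rigidity results of Suslin--Voevodsky \cite{SV}, and the plan is to organise the proof around three points: (a) {\'e}tale acyclicity of the individual sheaves $\cI^j$; (b) {\'e}tale acyclicity of the whole complex $\cI^\ast$, equivalently that $\cI^\ast$ is also a resolution of the constant sheaf $\nz$ for the {\'e}tale topology; and (c) the three quasi-isomorphisms \eqref{qiAug}, \eqref{qiSuslin}, \eqref{qiEtale}, which I would deduce by combining (a) and (b) with the computation of $\cI^\ast(\Spec k)$ and with Suslin--Voevodsky rigidity. Points (a) and (b) are essentially formal once one has the Suslin--Voevodsky comparison of $\qfh$- and {\'e}tale cohomology; the substance lies in (c).

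For (a): let $\pi \colon (\Sch/k)_{\qfh} \to (\Sch/k)_{\et}$ be the identity-on-objects morphism of sites, and recall that every $\qfh$-sheaf is automatically an {\'e}tale sheaf, so $\pi_* \cI^j$ is just $\cI^j$. As $\cI^j$ is injective in $\Shv_{\qfh}(\Sch/k)$, one has $\Ext^q_{\Shv_{\qfh}}([U], \cI^j) = 0$ for all $q > 0$ and all $U$, hence $R^q\pi_* \cI^j = 0$ for $q > 0$; the Leray spectral sequence for $\pi$ then collapses and yields $H^i_{\et}(U, \cI^j) = H^i_{\qfh}(U, \cI^j) = 0$ for $i > 0$ and all $U$. (This step uses neither that $k$ is algebraically closed nor that $n$ is prime to $\ch(k)$.) For (b): since $\cI^\ast$ is a $\qfh$-injective resolution of $\nz$, the cohomology presheaf $\underline{H}^i(\cI^\ast)$ is $U \mapsto H^i_{\qfh}(U, \nz)$, whose {\'e}tale sheafification is $R^i\pi_*(\nz_{\qfh})$. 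Here I would invoke the Suslin--Voevodsky comparison theorem \cite{SV}: for $n$ invertible the unit $\nz_{\et} \to R\pi_*(\nz_{\qfh})$ is an isomorphism --- this is where the hypothesis $n$ prime to $\ch(k)$ enters. Consequently $a_{\et}\underline{H}^i(\cI^\ast) = 0$ for $i \neq 0$ and $a_{\et}\underline{H}^0(\cI^\ast)$ is the constant {\'e}tale sheaf $\nz$; that is, $\cI^\ast$ resolves $\nz$ for the {\'e}tale topology as well.

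For (c), observe first that, $k$ being algebraically closed, $\cI^\ast(\Spec k)$ has cohomology $H^\ast_{\qfh}(\Spec k, \nz) = H^\ast_{\et}(\Spec k, \nz) = \nz$ concentrated in degree $0$, so $\epsilon \colon \nz \to \cI^\ast(k)$ is a quasi-isomorphism of bounded-below complexes of abelian groups. Each term $C_p([X])$ of the Suslin complex is a free abelian group, so $\Hom(C_\ast([X]), -)$ is exact in the second variable; applying it to $\epsilon$ gives \eqref{qiAug}. For \eqref{qiEtale}: being a bounded-below complex of $\qfh$-injectives, $\cI^\ast$ is $K$-injective, so the source of \eqref{qiEtale} computes $\rHom_{\Shv_{\qfh}}(\underline{C}_\ast([X]), \nz)$ and its target $\cI^\ast(X)$ computes $\rHom_{\Shv_{\qfh}}([X], \nz) = R\Gamma_{\qfh}(X, \nz)$; the map is induced by $\eta \colon [X] \to \underline{C}_\ast([X])$, and, using (b) to pass to the {\'e}tale topology, the claim that it is a quasi-isomorphism is exactly the assertion that the Suslin complex of $X$ computes {\'e}tale (co)homology over an algebraically closed field --- the Suslin--Voevodsky rigidity theorem, which ultimately rests on Gabber's rigidity/specialisation for torsion {\'e}tale sheaves. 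The quasi-isomorphism \eqref{qiSuslin} is of the same nature: after applying $\Gamma(k, -)$ and using (a) and (b), it amounts to the fact that over an algebraically closed field the $k$-points already detect $\nz$-coefficient invariants --- again rigidity. Finally, the bounded-complex version follows by induction on the length of a complex in $\Comp^b(\Ck)$, using the stupid filtration, the fact that all three maps are exact (triangulated) in the $[X]$-variable, naturality, and the five lemma.

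The main obstacle is point (c), and within it the rigidity quasi-isomorphisms \eqref{qiEtale} and \eqref{qiSuslin}: these are not formal but are the very content of the Suslin--Voevodsky rigidity theorem. In a write-up I would carry out the reductions above to the statements of \cite{SV} (and \cite{SVBlochKato}, where a finer Bloch--Kato-type input is convenient) rather than reprove Gabber rigidity; the remaining glue --- (a), (b), \eqref{qiAug}, and the passage to bounded complexes --- is routine.
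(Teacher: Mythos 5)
Your proposal is correct and follows essentially the same route as the paper: the acyclicity of $\cI^\ast$ and the quasi-isomorphisms \eqref{qiSuslin} and \eqref{qiEtale} are delegated to the Suslin--Voevodsky comparison and rigidity theorems (\cite[Thm.~10.2, Thm.~7.6]{SV}), \eqref{qiAug} follows from $C_\ast([X])$ being a bounded-above complex of free abelian groups, and the bounded-complex case is a standard d{\'e}vissage. The only (cosmetic) divergence is your Leray-spectral-sequence argument for the acyclicity of the individual $\cI^j$, where the paper instead invokes the technique of \cite[Prop.~3.1.7]{V1} with ``Nisnevich'' replaced by ``{\'e}tale''; both are fine.
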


\begin{proof}
The assertion that $\cI^\ast$ is acyclic is \cite[Thm. 10.2]{SV}. The assertion that the $\cI^i$ are acyclic can be proven using verbatim the techniques of \cite[Prop. 3.1.7]{V1} with ``Nisnevich'' replaced by ``{\'e}tale''.

The claim that $\Gamma(k, -)$ and $\eta$ induce a quasi-isomorphisms is \cite[Thm. 7.6]{SV} (see \cite{Ge1} for a discussion on why the characteristic zero hypothesis in Suslin-Voevodsky's statement is not necessary). The morphism induced by $\epsilon$ is a quasi-isomorphism because the complex $C_\ast([X])$ is a complex of free abelian groups, i.e, a complex of projective objects.

The usual spectral sequence arguments allow us to pass from a single $[X]$ to a bounded complex.
\end{proof}

\begin{prop}\label{prop3.etreal}
Suppose that the field $k$ is algebraically closed and $n$ is invertible in $k$. For $X\in \Sch / k$ with the natural morphism $\pi:X\to \Spec(k)$, there are canonical functorial isomorphisms
\begin{align}
\rHom(\Phi^{\et}\alpha^* M(X), \nz) &= R\pi_{*}(\nz)_X, \qaq \\
\rHom(\Phi^{\et}\alpha^* M^c(X)), \nz) &= R\pi_{!}(\nz)_X. \label{McReal}
\end{align}
\end{prop}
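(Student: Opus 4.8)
The plan is to establish the two isomorphisms by first treating the smooth case via Suslin--Voevodsky's rigidity theorem (Theorem~\ref{SVrigidity}), and then extending to arbitrary $X \in \Sch / k$ by resolution-of-singularities-type d\'evissage (cdh or $l$dh descent), using that all four functors $\alpha^*$, $\Phi^{\et}$, $M$, $M^c$ carry the relevant descent and localization triangles to distinguished triangles, while the right-hand sides $R\pi_*(\nz)_X$ and $R\pi_!(\nz)_X$ do as well. Throughout one works with $\nz$-coefficients, $n$ invertible in $k$, so that $\Phi^{\et}$ is defined (Proposition~\ref{prop.etreal}(3)).

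First I would unwind the left-hand side. For $X \in \Sm / k$, $\Phi^{\et} \alpha^* M(X)$ is computed by the complex of \'etale sheaves associated to $\underline{C}_\ast([X]) \otimes \nz$ evaluated at $\ol k$, and by Theorem~\ref{SVrigidity} the natural maps \eqref{qiAug}, \eqref{qiSuslin}, \eqref{qiEtale} identify $\rHom(\Phi^{\et}\alpha^* M(X), \nz)$ with $R\Gamma(-, \cI^\ast)$ restricted along $X$, which is precisely $R\pi_*(\nz)_X$ by definition of $\cI^\ast$ as an injective resolution of the constant $\qfh$-sheaf $\nz$ and the comparison of $\qfh$- and \'etale-cohomology with finite coefficients (Suslin--Voevodsky). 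The functoriality is built into the construction since all the maps are natural in $[X]$, and in fact in bounded complexes $\Comp^b(\Ck)$ by the total-complex remark in Theorem~\ref{SVrigidity}. For $M^c(X)$ with $X$ smooth, one proceeds the same way using $c_{\equi}(X/k)$ in place of the representable presheaf: the Suslin complex of $c_{\equi}(X/k)$ computes $M^c(X)$, and the analogous rigidity statement identifies its \'etale realization's dual with $R\pi_!(\nz)_X$.

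Next I would pass to general $X$. Choose a smooth $l$dh-hypercover $\cX_\bullet \to X$ with each $\cX_i$ smooth (Gabber's alterations, \cite{Il}, \cite{Ke}); then $M(\cX_\bullet) \to M(X)$ is an isomorphism in $\DMeffknz$, hence so is $\Phi^{\et}\alpha^* M(\cX_\bullet) \to \Phi^{\et}\alpha^* M(X)$, and $\rHom(-,\nz)$ of it is the homotopy limit of the $R\pi_{i*}(\nz)_{\cX_i}$; on the other side, \'etale cohomological descent gives $R\pi_*(\nz)_X \iso \mathrm{holim}\, R\pi_{i*}(\nz)_{\cX_i}$. Matching these via the isomorphisms already proved for smooth schemes yields the first formula; naturality of the hypercover construction (and independence of the choice, by the usual cofinality argument) gives functoriality in $X$. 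For $M^c$ one uses instead, for $X \hookrightarrow \ol X$ a compactification with closed complement, the localization triangle $M^c(X \setminus \ol X)$-type triangles together with the smooth $l$dh-resolution $\cV_\bullet \to X$ of Equation~\eqref{smoothcdhReplacement} (so $M(\cV_\bullet) \iso M^c(X)$), matched against the corresponding localization/excision triangles for $R\pi_!(\nz)$.

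I expect the main obstacle to be the $M^c$ case: one must be careful that the identification $\rHom(\Phi^{\et}\alpha^* M^c(X),\nz) = R\pi_!(\nz)_X$ for $X$ smooth really does come out of rigidity applied to $c_{\equi}(X/k)$ with the correct variance, and that the d\'evissage triangles on the motivic side (localization for $M^c$) are compatible with the triangles $R\pi_!|_{U} \to R\pi_! \to R\pi_!|_{Z}$ on the \'etale side under $\rHom(-,\nz)$; sign and shift bookkeeping in the duality $\uHom(R\pi_!(\nz)_X,\nz) \cong R\pi_* R\pi^!\nz$ (as in Lemma~\ref{lem.ethom}) must be tracked. The smooth $M(X)$ case is essentially a repackaging of Theorem~\ref{SVrigidity}, so the real content is organizing the descent argument so that it simultaneously handles both realizations and their compatibility.
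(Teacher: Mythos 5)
Your overall strategy would work, but it is heavier than the paper's and, as written, the primary path for $M^c$ has a gap. The key point you miss is that Theorem~\ref{SVrigidity} (Suslin--Voevodsky's comparison together with the \'etale acyclicity of the injective $\qfh$-resolution $\cI^\ast$) is stated for \emph{arbitrary} $X \in \Sch / k$, not just smooth $X$, and that $M(X)$ is by definition the image of $\underline{C}_\ast([X])$ in $\DMeffknz$ even for singular $X$. Hence the first isomorphism is obtained directly from the zig-zag $\hom(C_\ast([X]), \nz) \to \hom(C_\ast([X]), \cI^\ast(k)) \leftarrow \hom(\underline{C}_\ast([X]), \cI^\ast) \to \cI^\ast(X)$ with no hypercover d\'evissage at all; your reduction to smooth schemes followed by $l$dh-descent is unnecessary and would moreover require you to actually establish \'etale cohomological descent along $l$dh-hypercovers (a $\zll$-linear transfer argument, prime by prime), which you gloss over. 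For \eqref{McReal}, the route you yourself flag as the main obstacle --- a rigidity statement for $c_{\equi}(X/k)$ --- is indeed not available from Theorem~\ref{SVrigidity} as stated and should not be the load-bearing step; the paper instead uses a Nagata compactification $X \hookrightarrow \overline{X}$ with closed complement $Z$ to represent $M^c(X)$ by the two-term complex $[Z] \to [\overline{X}]$ (via the localization triangle) and $R\pi_!(\nz)_X$ by $\Tot(\cI^\ast(\overline{X}) \to \cI^\ast(Z))$, after which the bounded-complex version of the same zig-zag gives the result. Your fallback via compactification and localization is essentially this, so the proposal is salvageable, but you should promote that fallback to the main argument and drop both the $c_{\equi}$ rigidity claim and the hypercover reduction.
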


\begin{proof}
We begin with the non-compact support case. Recall that by definition, the motive of a scheme $X \in \Sch / k$ is the image of the complex of presheaves with transfers $\underline{C}_\ast([X])$ in $\DMeffk$. The image of this complex in $\DMeffket$ is just the {\'e}tale sheafification (\cite[Cor. 5.29]{V4}) and since $k$ is separably closed, the image of this in $\Dknz$ is just its evaluation at $k$. Since the terms of this complex are free abelian groups, they are projective objects of $\Shv_{\et}(Et / k) \cong \AB$ and so $\rHom(\Phi^{\et}\alpha^* M(X), \nz)$ is represented by the complex $\hom(C_\ast([X]), \nz)$.

On the other hand, the complex $R\pi_{*}(\nz)_X$ is represented by $\cI^\ast(X)$ where $\cI^\ast$ is any complex of sheaves on $(\Sch / k)_{\et}$ for which each $\cI^i$ is an acyclic {\'e}tale sheaf and which is quasi-isomorphic to $\nz$ as a complex of {\'e}tale sheaves. For example, the $\cI^\ast$ which we chose above serves this purpose by Theorem~\ref{SVrigidity}.

So the zig-zag of quasi-isomorphisms
\[ \hom(C_\ast([X]), \nz) \stackrel{\epsilon}{\rightarrow}\  \stackrel{\Gamma(k, -)}{\leftarrow}  \ \stackrel{\eta}{\to} \cI^\ast(X)  \]
of Theorem~\ref{SVrigidity} provides the isomorphism in $\Dknz$ we are looking for.

Now let $\Sch^{comp}(k)$ be the category whose objects are closed immersions $Z \to \overline{X}$ between proper schemes and morphisms are cartesian squares. There is a canonical functor towards $\Sch^{prop}(k)$ (the category with the same objects as $\Sch(k)$ but only proper morphisms) which sends a closed immersion $Z \to \overline{X}$ to its open complement $\overline{X} - Z$. The functor $\Sch^{comp}(k) \to \Sch^{prop}(k)$ is full and essentially surjective by Nagata's compactification theorem. Furthermore, by the localisation distinguished triangle (\cite[Prop. 5.5.5]{Ke} or \cite[Prop. 4.1.5]{V1}) the functor \mbox{$M: \Sch^{comp}(k) \to \DMeffk$} (which sends a closed immersion $Z \to X$ to the image of the complex $\Tot(\underline{C}_\ast([Z]) \to \underline{C}_\ast([\overline{X}]))$) factors through \mbox{$M^c: \Sch^{prop} / k \to \DMeffk$} (via the obvious canonical natural transformation induced by the short exact sequence $0 \to [Z] \to [\overline{X}] \to  z_{equi}(X / k, 0)$). That is, $M^c(X)$ is represented by the complex $\underline{C}_\ast([X^c])$ where we define $[X^c]$ to be the two term complex $(\dots {\to} 0 {\to} [Z] {\to} [\overline{X}] {\to} 0 {\to} \dots)$.

On the {\'e}tale side, the situation is the same. The object $R\pi_{!}(\nz)_X$ is (by definition) represented by the complex $\Tot(\cI^\ast(\overline{X}) {\to} \cI^\ast(Z))$ where $\cI^\ast$ is as above.

So the isomorphism of Equation~\eqref{McReal} can be obtained from the zig-zag of quasi-isomorphisms
\[ \hom(C_\ast([X^c]), \nz) \stackrel{\epsilon}{\to}\ \stackrel{\Gamma(k, -)}{\leftarrow}  \ \stackrel{\eta}{\to} \cI^\ast(X^c).  \qedhere \]
\end{proof}

\ifShowNewSectionTwoPointTwoChanges
% BEGIN BLUE COLOR
{ \color{blue}
\fi

\section{Truncations of twisted complexes} \label{dgcategoriesSection}

We recall here some definitions and properties of twisted complexes in dg-categories and their b{\^e}te truncations when the dg-category is negative.

\subsection{dg-categories} \label{dgcategories}

We follow the notation and conventions of \cite{Bo2} (who uses \cite{BK} as his reference).

A \emph{dg-category} is a category enriched over complexes. That is, a category for which each of the hom sets is equipped with a structure of a (cohomological)\footnote{i.e., the differential raises the degree by one.} complex of abelian groups, and such that for all objects $P, Q, R$, composition is a morphism of complexes $\hom(P, Q) \otimes \hom(Q, R) \to \hom(P, R)$. To be precise (and fix the sign convention for the differential on a tensor product of complexes), composition is bilinear, and if $f: P \to Q$ and $g: Q \to R$ are two morphisms of pure degree then
\[ \deg(gf) = \deg(f) + \deg(g), \qquad \text{ and } \qquad d(gf) = (dg)f + (-1)^{\deg(f)} g (df). \]
In particular, 
\begin{equation} \label{equa:didzero}
d(\id_E) = 0.
\end{equation}

We follow Bondarko and require dg-categories to have finite sums as well, but not all authors do this. Every additive category has a canonical structure of dg-category with all morphisms of degree zero.

A \emph{negative dg-category} is a dg-category for which there are no morphisms of pure positive degree. 

\subsection{Twisted complexes} \label{twistedcomplexes}

If $J$ is a dg-category, then a \emph{twisted complex} over $J$ is a set
\[ \{(P^i)_{i \in \ZZ}, P^i \stackrel{q_{ij}}{\to} P^j \} \]
 (where each $P^i$ is an object of $J$) such that 
\begin{enumerate}
 \item almost all the objects $P^i$ are zero,
 \item $q_{ij}$ is pure of degree $i + j - 1$,
 \item for each $i, j \in \ZZ$, one has the identity%
\footnote{An alternative formulation is to formally set $\hom(P[i], Q[j]) = \hom(P, Q)[i - j]$ (where $P[i]$ and $Q[j]$ are formal, and $\hom(P, Q)[i - j]$ means the complex $\hom(P, Q)$ shifted $i - j$ times in the usual sense and then the $q_{ij}$ form an endomorphism 
\[ q: \bigoplus_{n \in \ZZ} P^n[n] \to \bigoplus_{n \in \ZZ} P^n[n] \]
of pure degree one, and the required identity can be written as 
\[ d_J (q) + q^2 = 0 \]
where $d_J$ is the term-wise application of the differentials from the homs in the dg-category $J$ applied to each entry individually.
 } %
 \[ d_J q_{ij} + \sum_{m \in \ZZ} q_{mj}q_{im} = 0. \] 
\end{enumerate}

The notation $\{(P^i)_{i \in \ZZ}, P^i \stackrel{q_{ij}}{\to} P^j \}$ is usually shortened to $\{(P^i), q_{ij} \}$.

\subsection{Morphisms of twisted complexes} \label{morphismsoftwistedcomplexes}

A \emph{morphism of pure degree $\ell$} of twisted complexes $f: \{(P^i), q_{ij} \} \to \{(P'^i), q'_{ij} \}$ is just a morphism
\[ f = [f_{ij}]_{i, j \in \ZZ}: \bigoplus_{n \in \ZZ} P^n \to \bigoplus_{n \in \ZZ} P'^n \]
such that%
\footnote{In other words, $\deg(f_{ij}: P^i[i] \to P'^j[j]) = \ell$.} 
\[ \deg(f_{ij}: P_i \to P_j') = \ell + i - j. \]
Apart from the condition on the degrees, there are no other compatibilities the $f$ must satisfy (i.e., they don't have anything to do with $q$ or $q'$).

\subsection{Differential of morphisms of twisted complexes} \label{differentialsoftwistedcomplexes}

The differential of a morphism $f: \{(P^i), q_{ij} \} \to \{(P'^i), q'_{ij} \}$ of twisted complexes (of pure degree $\ell$) is defined by the rule%
\footnote{We used $a, b, c$ instead of $i, j, k$ to make comparison with other references easier.}%
\footnote{In matrix notation this is \[ df = d_J f + f q' - (D^\ell q D^\ell) f \] where $D$ is the diagonal matrix $D = diag(-1, 1, -1, 1, -1, \dots, -1)$.}

\[ (df)_{ab} = d_J(f_{ab}) + \sum_{c \in \ZZ} \bigl ( q'_{cb} f_{ac} - (-1)^{\ell(c-a)} f_{cb} q_{ac} \bigr ). \]

In particular, if $J$ is a \emph{negative} dg-category and $f$ is of degree zero, then we have

\[ (df)_{ab} = d_J(f_{ab}) + \sum_{a \leq c < b} q'_{cb} f_{ac} - \sum_{a < c \leq  b} f_{cb} q_{ac} \]

and if $a = b$ then this is just 
\[ (df)_{aa} = d_J(f_{aa})  \]
and if $b = a + 1$ then it is
\[ (df)_{a,a + 1} = d_J(f_{a,a + 1}) + q'_{a,a+1} f_{aa} - f_{a+1,a+1} q_{a,a+1} \]
So if $f$ is a diagonal matrix, with zero degree morphisms, then the differential is zero if and only if
\[ d f_{aa} = 0, \qquad \textrm{ and } \qquad \vcenter{\xymatrix{ P^a \ar[r]^{q_{a,a+1}} \ar[d]_{f_{aa}} & P^{a+1} \ar[d]^{f_{a+1,a+1}} \\ P'^a \ar[r]^{q_{a,a+1}} & P'^{a+1}}} \textrm{ commutes for all } a \in \ZZ. \]

A morphism $f$ is called a \emph{twisted morphism} if $\deg(f) = 0$ and $d(f) = 0$.

\subsection{Cones and shifts} \label{conesandshiftstwistedcomplexes}

If $\{(P^i), q_{ij}\}$ is a twisted complex and $n \in \ZZ$, then its \emph{shift} $\{(P^i), q_{ij}\}[n]$ is the twisted complex whose $n$th object is $P^{i+n}$ and $ij$th morphism is $(-1)^nq_{i+n, j+n}$.

Suppose that $f: \{(P^i), q_{ij} \} \to \{(P'^i), q'_{ij} \}$ is a twisted morphism of twisted complexes. The \emph{cone} of this morphism is $Cone(f) = \{(P''^i), q''_{ij} \}$ where
\[ P''^i = P^{i + 1} \oplus P'^{i}, \qquad q_{ij}'' = \left ( \begin{array}{cc} -q_{i+1,j+1} & 0 \\ f_{i+1,j} & q'_{ij} \end{array} \right ).  \]

\begin{rem}
The sign is missing in Bondarko's paper.
\end{rem}

This comes equipped with the obvious inclusion and projection morphisms
\[ \{(P'^i), q'_{ij} \} \to Cone(f) \to \{(P^i), q_{ij}\}[1] \]
which are both twisted morphisms. %(because they are diagonal matricies who's diagonal entries are inclusions-of or projections-to direct summands, and identity morphisms in $\cA$ have zero differential).

\subsection{The categories $\PT(J)$ and $\Tr(J)$.} \label{pttrJ}

Associated to any dg-category $J$ are the categories $J^0$ and $H^0(J)$ which have the same objects as $J$, but
\[ \hom_{J^0}(P, Q) = (\hom_J(P, Q))^0 \textrm{ and } \hom_{H^0(J)}(P, Q) = H^0(\hom_J(P, Q)). \] 

The category of twisted complexes with the morphisms, grading, and differential described above form again, a dg-category, which is denoted by $\PT(J)$. One then defines
\[ \Tr(J) = H^0(\PT(J)). \]
With the shift described above, and the triangles isomorphic to those of the form $P \stackrel{f}{\to} P' \to Cone(f) \to P[1]$ as distinguished triangles, the category $\Tr(J)$ is a triangulated category. If all morphisms in $J$ have degree zero, then $\Tr(J)$ is (equal to) the bounded homotopy category $K^b(J)$.

In general, there is a canonical dg-functor $J \to \PT(J)$ which induces a functor 
\begin{equation} \label{equa:heartInclusion}
 H^0J \to \Tr(J). 
 \end{equation}
Furthermore, if $J$ is a negative dg-category, then there is a canonical full, essentially surjective functor $J \to H^0(J)$ induced by the ``good'' truncation on hom complexes. In this case we obtain a functor
\begin{equation} \label{equa:weightComplexGeneral}
\Tr(J) \to K^b(H^0J)
\end{equation}
whose composition with \eqref{equa:heartInclusion} is the full inclusion of $H^0(J)$ in $K^b(H^0J)$ as complexes of degree zero.

By definition, Bondarko's weight complex functor
\begin{equation} \label{equa:weightComplex}
t: \DMeffgmkZp \to K^b(\CHMp)
\end{equation}
is the functor \eqref{equa:weightComplexGeneral} for the $J$ from Theorem~\ref{BondgEnhancement}.

\subsection{B{\^e}te truncations} \label{betetruncations}

Suppose that $J$ is a \emph{negative} differential graded category, \mbox{$f: \{(P^i), q_{i j} \} \to \{(P^{\prime i}), q'_{i j} \}$} is a morphism of twisted complexes, and $n \in \ZZ$. 

Define $P_{\geq n} = \{(P_{\geq n}^i), q_{ij}^{\geq n}\}$ and $f_{\geq n}: P_{\geq n} \to P'_{\geq n}$ (resp. $P_{\leq n} = \{(P_{\leq n}^i), q_{ij}^{\leq n}\}$ and $f_{\leq n}: P'_{\leq n} \to P'_{\leq n}$) by setting $P^i_{\geq n} = P^i, q_{ij}^{\geq n} = q_{ij}, f_{ij}^{\geq n} = f_{ij}$ (resp. $P^i_{\leq n} = P^i, q_{ij}^{\leq n} = q_{ij}, f_{ij}^{\leq n} = f_{ij}$) for $i, j \geq n$ (resp. $i, j \leq n$) and zero otherwise. Define morphisms
\[ \iota: P_{\geq n} \to P, \qquad \pi: P \to P_{\leq n}, \qquad \delta: P_{\leq {n - 1}} \to P_{\geq n}[1] \]
via the canonical inclusion \mbox{$\oplus_{i \geq n} P^i \to \oplus_{i \in \ZZ} P^i$}, projection $\oplus_{i \in \ZZ} P^i \to \oplus_{i \leq n} P^i$, and the composition $\oplus_{i \leq n - 1} P^i \to \oplus_{i \in \ZZ} P^i \stackrel{q}{\to} \oplus_{i \in \ZZ} P^i \to \oplus_{i \geq n} P^i$, respectively.

The following lemma is easily checked.

\begin{lem} We have the following elementary properties.
\begin{enumerate}
 \item $P_{\geq n}$ and $P_{\leq n}$ are again twisted complexes. That is, we have \mbox{$d_J(q_{ij}^{\geq n}) + \sum_{i < m < j} q_{mj}^{\geq n}q_{im}^{\geq n} = 0$} and $d_J( q_{ij}^{\leq n}) + \sum_{i < m < j} q_{mj}^{\leq n}q_{im}^{\leq n} = 0$.
 
 \item If $f$ is a twisted morphism, then so are $f^{\geq n}$ and $f^{\leq n}$.
 
 \item The morphisms $\iota, \pi, \delta$ are all twisted morphisms.
 
 \item We have the equality (not just an isomorphism) $Cone(\delta) = P$. Therefore, the image of 
 \begin{equation} 
P_{\geq n} \stackrel{\iota}{\to} P \stackrel{\pi}{\to} P_{\leq n - 1} \stackrel{\delta}{\to} P_{\geq n}[1] \label{equa:decompTriTruncate}
\end{equation}
is a distinguished triangle in $\Tr(J)$.

 \item We have equalities $(P_{\geq m})_{\leq m}[m] = (P_{\leq m})_{\geq m}[m] = P^m$ where $P^m$ is the object $P^m$ of $J$ considered as a twisted complex concentrated in degree zero. Hence, we also obtain the following diagrams in $\PT(J)$, giving rise to distinguished triangles in $\Tr(J)$:
\begin{align}
P_{\geq m} \stackrel{\pi'}{\to} P^m[-m] \stackrel{\delta'}{\to} P_{\geq m + 1}[1] \stackrel{\iota'}{\to} P_{\geq m}[1] \label{equa:geqTri} \\
P_{\leq m} \stackrel{\delta''}{\to} P^{m + 1}[-m] \stackrel{\iota''}{\to} P_{\leq m + 1}[1] \stackrel{\pi''}{\to} P_{\leq m}[1] \label{equa:leqTri}
\end{align}

 \item The morphisms $\iota, \pi$ induce natural transformations at the level of the dg-category of twisted complexes, i.e., for any morphism $f$ in $\PT(J)$ the diagram
 \begin{equation} \label{equa:functoriotapi}
 \xymatrix{
P_{\geq n} \ar[d]^{f^{\geq n}} \ar[r]^\iota & P  \ar[r]^\pi \ar[d]^f & P_{\leq n - 1} \ar[d]^{f^{\leq n}} %\ar[r]^\delta & P_{\geq n}[1] \ar[d]^{f^{\geq n}[1]} 
\\
P'_{\geq n}  \ar[r]_\iota & P' \ar[r]_\pi  & P'_{\leq n - 1}  %\ar[r]_\delta & P'_{\geq n}[1] 
} 
\end{equation}
commutes in $\PT(J)$.% (the morphism $\delta$ is almost never natural). 

\end{enumerate}
\end{lem}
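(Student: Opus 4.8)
The plan is to reduce all six assertions to three ingredients: the defining identity of a twisted complex, which in matrix form reads $d_J q + q^2 = 0$, equivalently $d_J q_{ij} + \sum_m q_{mj}q_{im} = 0$; the analogous identity $df = d_J f + fq' - qf = 0$ characterising twisted morphisms, recalled in \S\ref{differentialsoftwistedcomplexes}; and the combinatorial observation that, $J$ being negative, a component $q_{ij}$ is nonzero only when $j>i$, so that any intermediate index $m$ occurring in a sum $\sum_{i<m<j}$ is strictly between $i$ and $j$ and hence lies on the same side of the cut $n$ as soon as $i$ and $j$ both do. Each item is then a term-by-term check against these identities, which I would carry out in the order $(1),(2),(3),(4),(5),(6)$.

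For (1) I would fix a pair $(i,j)$: if one of $i,j$ is on the wrong side of $n$, then $q^{\ge n}_{ij}=0$ and every summand $q^{\ge n}_{mj}q^{\ge n}_{im}$ vanishes too, so the required relation is $0=0$; if $i,j\ge n$, then every $m$ with $i<m<j$ also satisfies $m\ge n$, so the $(i,j)$-relation for $P_{\ge n}$ is literally the $(i,j)$-relation for $P$ — and symmetrically for $P_{\le n}$. For (2) I would run the same bookkeeping on $(df)_{ab}=d_J f_{ab}+\sum_{a\le c<b}q'_{cb}f_{ac}-\sum_{a<c\le b}f_{cb}q_{ac}$: at each $(a,b)$ either everything in sight vanishes, or all indices in play are $\ge n$ (resp. $\le n$) and $(df^{\ge n})_{ab}=(df)_{ab}=0$. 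For (3): $\iota$ and $\pi$ are the evident block inclusion and projection, diagonal matrices with identity entries on the relevant indices, hence of degree $0$, and $d\iota=0$, $d\pi=0$ unwind precisely to ``$q$ restricted to $\{i,j\ge n\}$ equals $q^{\ge n}$'' and ``$q$ restricted to $\{i,j\le n\}$ equals $q^{\le n}$'', true by construction; $\delta$ is by definition the off-diagonal block of $q$ with rows $\ge n$ and columns $\le n-1$, it is of degree $0$ as a map into $P_{\ge n}[1]$, and $d\delta=0$ is the block of $d_J q+q^2=0$ with rows $\ge n$, columns $\le n-1$, the quadratic term splitting — according to whether the middle index is $\ge n$ or $\le n-1$ — into exactly $q^{\ge n}\delta+\delta q^{\le n-1}$, up to the sign imposed by the shift.

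Part (4) is where I expect the only genuine friction. I would unravel the cone construction of \S\ref{conesandshiftstwistedcomplexes} directly: since every $P^i$ occurs in exactly one of $P_{\ge n}$, $P_{\le n-1}$, the underlying objects of $\mathrm{Cone}(\delta[-1])$ reassemble those of $P$, and the point is to check that the twisting matrix does too. This requires reconciling the sign $(-1)$ carried by morphisms under the shift $[1]$ with the sign $-q_{i+1,j+1}$ in the cone's twisting matrix, so that the diagonal blocks come out as the honest $q^{\ge n}$, $q^{\le n-1}$ and the off-diagonal block as $\delta$; this is exactly the sign flagged in the remark after the cone formula, and it is the one computation that needs care. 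Granting (4), part (5) is formal: iterating truncations down to a single index gives $(P_{\ge m})_{\le m}=(P_{\le m})_{\ge m}=P^m$ sitting in degree $m$ with trivial twisting (since $q_{mm}=0$ by negativity), and applying the triangle \eqref{equa:decompTriTruncate} to $P_{\ge m}$ with cut $m+1$, and to $P_{\le m+1}$ with cut $m+1$, then rotating, yields \eqref{equa:geqTri} and \eqref{equa:leqTri}. Finally (6) is one more matrix identity: $f\iota$ and $\iota'f^{\ge n}$ (resp. $\pi'f$ and $f^{\le n-1}\pi$) agree entrywise because $\iota,\pi$ are block maps and, $J$ being negative, a degree-$0$ morphism $f$ has no component from an index $\ge n$ to an index $<n$, so the relevant blocks of $f$, $f^{\ge n}$, $f^{\le n-1}$ coincide. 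No step is deep; the sign bookkeeping in (4) is the main obstacle, and all the rest is mechanical.
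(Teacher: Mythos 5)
Your proposal is correct and follows essentially the same route as the paper: a term-by-term matrix verification using the defining identity $d_Jq+q^2=0$, negativity of $J$ to kill the cross terms in the truncated sums, the explicit sign bookkeeping for the cone/shift in (4), and the rotation of \eqref{equa:decompTriTruncate} for (5). The only remark worth making is that in (6) you (sensibly) restrict to degree-zero $f$ — which is all that is needed downstream and is also what the paper's own argument implicitly uses, since for negative-degree morphisms the strict commutativity can fail.
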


\begin{rem}
%The morphism $\delta$ is almost never natural.
If all morphisms in $J$ have degree zero then $\delta$ is natural for twisted morphisms, but there are very few other situations in which $\delta$ is natural.
%For example, if $f$ is a twisted morphism %between complexes which have $q_{ij} = 0$ unless $j = i+1$, and $f$ is a twisted morphism such that $f_{i, i+1} = 0$ for all $i$, then $\delta$ is probably natural. It is not natural for all twisted morphisms.
\end{rem}

\begin{proof}
\begin{enumerate}
 \item If $i \geq n$ (resp. $j \leq n$) then this follows from the fact that $\{(P^i), q_{ij}\}$ satisfies this identity. If $j < n$ (resp. $i > n$) then all morphisms $q_{ij}^{\geq n}, q_{mj}^{\geq n}, q_{im}^{\geq n}$ (resp. $q_{ij}^{\leq n}, q_{mj}^{\leq n}, q_{im}^{\leq n}$) are zero. If $i < n \leq j$ (resp. $i \leq n < j$) then the morphisms $q_{mj}^{\geq n}$ (resp. $q_{im}^{\leq n}$) are not necessarily zero, but the compositions $q_{mj}^{\geq n}q_{im}^{\geq n}$ (resp. $q_{mj}^{\leq n}q_{im}^{\leq n}$) are, so again, we get zeros everywhere.

\item The analogous argument as for the first part works.

\item The matricies associated to $\iota$ and $\pi$ are diagonal, so since our dg-category is negative it suffices to check that the differential of each component is zero, and that they commute with the $q_{i,i + 1}$. Comutativity is clear, and identities have zero differential. For the morphism $\delta$ we do the calculation. Notice that if $a > n$ or $b < n$ then $\delta_{ab}$ and $d\delta_{ab}$ are automatically zero because their source or target is zero, so we can assume that $a \leq n \leq b$.
\[ \begin{split}
 (d\delta)_{ab} 
 &= d_J(\delta_{ab}) + \sum_{a, n \leq c < b} q_{cb} \delta_{ac} - \sum_{a < c \leq  b, n} \delta_{cb} (-1)q_{a-1,c-1} %
\\ &= d_J(f_{a-1,b}) + \sum_{a, n \leq c < b} q_{cb} q_{a-1,c} + \sum_{a < c \leq  b, n} q_{c-1,b} q_{a-1,c-1} 
\\ &= d_J(f_{a-1,b}) + \sum_{a, n \leq c < b} q_{cb} q_{a-1,c} + \sum_{a \leq c' <  b, n} q_{c',b} q_{a-1,c'} 
\\ &= d_J(f_{a-1,b}) + \sum_{a-1 < c < b} q_{cb} q_{a-1,c}
\end{split} \]
The last line is zero by virtue of $P$ being a twisted complex.

 \item This is straightforward. The $i$th object of the cone is $P^i_{\geq n} \oplus (P[1])^{i - 1}_{\leq n - 1}$ which is either $P^i_{\geq n}$ or $(P[1])^{i - 1}_{\leq n - 1}$ depending on whether $i \geq n$ or not, and in both cases we get $P^i$. For the morphisms, in the expression
 \[ f_{ij}^{Cone(\delta)} 
 = \left ( \begin{array}{cc} -(-q^{\leq n - 1}_{i+1-1,j+1-1}) & 0 \\ \delta_{i+1,j} & q^{\geq n}_{ij} \end{array} \right ) 
 = \left ( \begin{array}{cc} q^{\leq n - 1}_{i,j} & 0 \\ \delta_{i+1,j} & q^{\geq n}_{ij} \end{array} \right ) \]
the only non-zero entry is $q_{ij}^{\leq n-1} = q_{ij}$ if $i, j < n$, or is $\delta_{i + 1, j} = q_{ij}$ if $i \leq n \leq j$, or is $q_{ij}^{\geq n} = q_{ij}$ if $n \leq i, j$.

 \item The equalities $(P_{\geq m})_{\leq m}[m] = (P_{\leq m})_{\geq m}[m] = P^m$ follow immediately from the definitions. The exact triangles are just the triangle \eqref{equa:decompTriTruncate} with $n = m + 1$ applied to $P_{\geq m}$ for \eqref{equa:geqTri}, or $P_{\leq m + 1}$ for \eqref{equa:leqTri} (they are ``rotated'' as well).

 \item This is clear since $f^{\leq n}$ and $f^{\leq n-1}$ are by definition the unique morphisms fitting into the commutative diagram in $J$:
 \[ 
 \begin{gathered}[b]
 \xymatrix{
\oplus_{i \geq n} P^i \ar[r]^\iota \ar[d]_{f^{\geq n}} & \oplus_{i \in \ZZ} P^i \ar[d]^f \ar[r]^\pi \ar[d] & \oplus_{i < n} P^i \ar[d]^{f^{\leq n-1}} \\
\oplus_{i \geq n} P^i \ar[r]_\iota  & \oplus_{n \in \ZZ} P^i \ar[r]_\pi & \oplus_{i < n} P^i .
 } \\[-\dp\strutbox]
 \end{gathered}
 \qedhere
 \]
\end{enumerate}
\end{proof}

\ifShowNewSectionTwoPointTwoChanges
} %END BLUE COLOR
\fi

%%%%%%%%%%%%%%%%%%%%%%%%%%%%%%%%%
%%%%%%%%%%%%%%%%%%%%%%%%%%%%%%%%%
%%%%%%%%%%%%%%%%%%%%%%%%%%%%%%%%%

%%--------------------Here the manuscript ends--------------------------------
\Addresses

\begin{thebibliography}{CGGHL}
%%%%%%%%%%%%%%%%%%%%%%%%%%%%%%%%%
%%%%%%%%%%%%%%%%%%%%%%%%%%%%%%%%%
%%%%%%%%%%%%%%%%%%%%%%%%%%%%%%%%%

%\bibitem[Ay]{ayoub} J. Ayoub {\it La r\'ealisation \'etale et les op\'erations de Grothendieck}, preprint, 2011.

\bibitem[A-BV]{abv} J. Ayoub, L. Barbieri-Viale {\it $1$-motivic sheaves and the Albanese functor}, 
J. Pure Appl. Algebra {\bf 213} (2009), no. 5, 809--839. 

\begin{bibitem}[B]{B}
 S. Bloch,
 {\it Algebraic cycles and higher algebraic $K$-theory},
 Adv. Math. {\bf 61} (1986), 267--304.
\end{bibitem}

\begin{bibitem}[BK]{BK}
 A. I. Bondal and M. M. Kapranov, {\it Enhanced triangulated categories}, Mat. Sb. 181(5) (1990), 669-683 (in Russian; English transl. Math. USSR Sb. 70(1) (1991), 93–107).
\end{bibitem}

\begin{bibitem}[Bo]{Bo}
 M. Bondarko {\it Weight structures vs. t-structures; weight filtrations, spectral sequences, and complexes (for motives and in general)},
 J. of K-theory, v. 6, i. 03, p. 387-504, 2010,
see also http://arxiv.org/abs/0704.4003
\end{bibitem}

\begin{bibitem}[Bo2]{Bo2}
 M. Bondarko {\it Differential graded motives: weight complex, weight filtrations and spectral sequences for realizations; Voevodsky versus Hanamura},
 Journal of the Institute of Mathematics of Jussieu, v. 8, i. 01, p. 39-97, 2009,
see also http://arxiv.org/abs/0601713
\end{bibitem}

\begin{bibitem}[Bo3]{Bo3}
 M. Bondarko {\it Gersten weight structures for motivic homotopy categories; direct summands of cohomology of function fields and coniveau spectral sequences},
preprint: http://arxiv.org/abs/1312.7493
\end{bibitem}

%\begin{bibitem}[FV]{FV}
% E.M. Friedlander and V. Voevodsky,
% {\it Bivariant cycle cohomology},
% in: Cycles, Transfer, and Motivic Homology Theories,
% Annals of Math. Studies {\bf 143}, Princeton University Press, 2000.
%\end{bibitem}

\begin{bibitem}[Ge1]{Ge1}
 T. Geisser,
{\it Applications of deJong's theorem on alterations},
Resolution of singularities (Obergurgl, 1997), 299-314, Progr. Math., 181, Birkhauser, Basel, 2000.
\end{bibitem}

\begin{bibitem}[Ge2]{Ge2}
 T. Geisser,
 {\it On Suslin's homology and cohomology},
Documenta Math. Extra Volume: Andrei Suslin's Sixtieth Birthday (2010), 223--249. 
\end{bibitem}

\begin{bibitem}[Ge3]{Ge3}
 T. Geisser,
{\it Arithmetic homology and an integral version of Kato's conjecture},
J. Reine Angew. Math. 644 (2010), 1--22.
\end{bibitem}

\begin{bibitem}[GN]{GN}
 F. Guillen and V. Navarro Aznar, \emph{Un crit{\`e}re d’extension des foncteurs d{\'e}finis sur
les sch{\'e}mas lisses}, Publ. Math. IHES 95 (2002), 1--91.
\end{bibitem}

\begin{bibitem}[GeSc]{GeSc}
  T. Geisser and A. Schmidt,
  {\it Tame Class Field Theory for Singular Varieties over Finite Fields},
preprint: http://arxiv.org/abs/1405.2752
\end{bibitem}

\begin{bibitem}[GS1]{GS}
 H. Gillet and C. Soul\'e, 
{\it Descent, motives and $K$-theory},
J. Reine Angew. Math. 478 (1996), 127--176. 
\end{bibitem}


\begin{bibitem}[GS2]{GS2}
H. Gillet and C. Soul\'e,
{\it Motivic weight complexes for arithmetic varieties},  
J. Algebra  322  (2009),  no. 9, 3088--3141.
\end{bibitem}

\begin{bibitem}[GL]{GL}
 T. Geisser and M. Levine,
 {\it The Bloch-Kato conjecture and a theorem of Suslin-Voevodsky},
 J. Reine Angew. {\bf 530} (2001), 55--103.
\end{bibitem}


\begin{bibitem}[Il]{Il}
 L. Illusie,
 {\it On Gabber's refined uniformization},
preprint (see http://www.math.u-psud.fr/~illusie/).
\end{bibitem}

\begin{bibitem}[Iv1]{Iv1}
 F. Ivorra,
 {\it R\'ealisation $\ell$-adique des motifs triangul\'es g\'eom\'etriques I},
Doc. Math. {\bf 12} (2007), 607--671.
\end{bibitem}

\begin{bibitem}[Iv2]{Iv2}
 F. Ivorra,
 {\it R\'ealisation $\ell$-adique des motifs triangul\'es g\'eom\'etriques II},
Math. Zeit. 265 (2010) p. 221--247.
\end{bibitem}

\begin{bibitem}[J]{J}
 U. Jannsen,
 {\it Hasse principles for higher dimensional fields}, arXiv:0910.2803.
\end{bibitem}

\begin{bibitem}[JS]{JS}
   U. Jannsen and S. Saito,
   {\it Kato homology and motivic cohomology over finite fields}, 
http://arxiv.org/abs/0910.2815.
\end{bibitem}

\begin{bibitem}[KaSu]{KaSu}
 B. Kahn and R. Sujatha,
 {\it Birational geometry and localisation of categories},
http://arxiv.org/abs/0805.3753.
\end{bibitem}

\begin{bibitem}[KaSu2]{KaSu2}
 B. Kahn and R. Sujatha,
 {\it Birational motives, I},
http://www.math.uiuc.edu/K-theory/0596/.
\end{bibitem}

\begin{bibitem}[Ke1]{Ke}
 S. Kelly,
 {\it Triangulated categories of motives in positive characteristic},
 http://arxiv.org/abs/1305.5349.
\end{bibitem}

\begin{bibitem}[Ke2]{Ke2}
 S. Kelly,
 {\it Un isomorphisme de Suslin},
http://arxiv.org/abs/1407.5772.
\end{bibitem}

\begin{bibitem}[KeS1]{KeS}
 M. Kerz and S. Saito, 
	{\it Cohomological Hasse principle and motivic cohomology of arithmetic schemes}, 
    Publ. Math. IHES {\bf 115} (2012), 123--183.
\end{bibitem}

\begin{bibitem}[KeS2]{KeS2}
 M. Kerz and S. Saito, 
	{\it Cohomologicla Hasse principle and resolution of quotient singularities,}, 
    New York J. Math.  {\bf 19} (2013),  597--645.
\end{bibitem}

\begin{bibitem}[L]{L}
 M.. Levine,
 {\it Techniques of localization in the theory of algebraic cycles},
 J. Algebraic Geometry {\bf 10} (2001), 299--363.
\end{bibitem}


\begin{bibitem}[Pir]{Pir}
A. Pirutka,
{\it Sur le groupe de Chow de codimension deux des varietes sur les corps finis}, 
Algebra and Number Theory {\bf 5-6} (2011), 803-817.
\end{bibitem}

\begin{bibitem}[ScSp]{ScSp}
A. Schmidt and M. Spiess,
{\it Singular homology and class field thoery of varieties over finite fields}
J. Reine Angew. Math. {\bf 527} (2000), 13--37.
\end{bibitem}

\begin{bibitem}[Su]{Su}
 A. Suslin,
 {\it Higher Chow groups and etale cohomology},
 in: Cycles, Transfer, and Motivic Homology Theories,
 Annals of Math. Studies {\bf 143}, Princeton University Press, 2000.
\end{bibitem}


\begin{bibitem}[SV1]{SV}
 A. Suslin and V. Voevodsky,
{\it  Singular homology of abstract algebraic varieties},
Invent. Math. {\bf 123} (1996), 61--94.
\end{bibitem}

\begin{bibitem}[SV2]{SVRelCyc}
 A. Suslin and V. Voevodsky,
{\it  Relative cycles and Chow sheaves},
In Cycles, transfers, and motivic homology theories, volume 143 of \emph{Ann. of Math. Stud.}, pages 10--86. Princeton Univ. Press, Princeton, NJ, 2000.
\end{bibitem}

\begin{bibitem}[SV3]{SVBlochKato}
 A. Suslin and V. Voevodsky,
 {\it Bloch-Kato conjecture and motivic cohomology with finite coefficients},
 in: The Arithmetic and Geometry of Algebraic Cycles
 ed. B. Gordon, J. Lewis, S. Muller-Stach, S. Saito, N. Yui,
NATO Science Sries (2000), Kluwer Academic Publishers.
\end{bibitem}


\begin{bibitem}[V]{V1}
 V. Voevodsky,
 {\it Triangulated categories of motives over a field},
 in: Cycles, Transfer, and Motivic Homology Theories,
 Annals of Math. Studies {\bf 143}, Princeton University Press, 2000.
 \end{bibitem}

%\begin{bibitem}[V2]{V2}
% V. Voevodsky,
% {\it Motivic cohomology are isomorphic to higher Chow groups},
% Int. Math. Res. Not.  {\bf 7} (2002), 351--355.
%\end{bibitem}

%\begin{bibitem}[V3]{V3}
% V. Voevodsky,
% {\it Cancellation theorem},
% $K$-theory Preprint Archives, http://www.math.uiuc.edu/K-theory/0541/
%\end{bibitem}

\begin{bibitem}[V2]{V4}
 V. Voevodsky,
 {\it Cohomological theory of presheaves with transfers},
 in: Cycles, Transfer, and Motivic Homology Theories,
 Annals of Math. Studies {\bf 143}, Princeton University Press, 2000.
 \end{bibitem}
\end{thebibliography}
\end{document}